\newtheorem{thm}{Theorem}[section]
\newtheorem{cor}[thm]{Corollary}
\newtheorem{lem}[thm]{Lemma}
\newcommand{\R}{{\mathbb{R}}}
\newcommand{\N}{{\mathbb{N}}}
\newcommand{\vp}{\varphi}
\newcommand{\La}{\triangle}
\newcommand{\la}{\triangle}
\newcommand{\bs}{\backslash}
\newcommand{\2}{\overline}
\newcommand{\3}{\varepsilon}
\newcommand{\4}{\widetilde}
\begin{document}

\title{Extinction profile of the logarithmic diffusion equation}

\author[Kin Ming Hui]{Kin Ming Hui}
\address{Kin Ming Hui: 
Institute of Mathematics, Academia Sinica,
Taipei, 10617, Taiwan, R.O.C.}
\email{kmhui@gate.sinica.edu.tw}

\author[Sunghoon Kim]{Sunghoon Kim}

\address{Sunghoon Kim:
Department of Mathematics and PMI (Pohang Mathematics Institute), Pohang University of Science and Technology (POSTECH), Hyoja-Dong San 31, Nam-gu, Pohang 790-784, South Korea} 
\email{ math.s.kim@postech.ac.kr}

\keywords{logarithmic diffusion equation, extinction profile, asymptotic 
behaviour}
\subjclass{Primary 35B40 Secondary 35K57, 35K65}

\maketitle


\begin{abstract}
Let $u$ be the solution of $u_t=\Delta\log u$ in $\R^N\times (0,T)$, 
$N=3$ or $N\ge 5$, with initial value $u_0$ satisfying 
$B_{k_1}(x,0)\le u_0\le B_{k_2}(x,0)$ for some constants $k_1>k_2>0$ where 
$B_k(x,t)=2(N-2)(T-t)_+^{N/(N-2)}/(k+(T-t)_+^{2/(N-2)}|x|^2)$ is the 
Barenblatt solution for the equation. We give a new different proof on 
the uniform convergence of the rescaled function 
$\4{u}(x,s)=(T-t)^{-N/(N-2)}u(x/(T-t)^{-1/(N-2)},t)$, $s=-\log (T-t)$, 
on $\R^N$ to the rescaled Barenblatt solution
$\4{B}_{k_0}(x)=2(N-2)/(k_0+|x|^2)$ for some $k_0>0$ as $s\to\infty$.
We also obtain convergence of the rescaled solution $\4{u}(x,s)$ as $s\to\infty$
when the initial data satisfies $0\le u_0(x)\le B_{k_0}(x,0)$ in $\R^N$ 
and $|u_0(x)-B_{k_0}(x,0)|\le f(|x|)\in L^1(\R^N)$ for some constant
$k_0>0$ and some radially symmetric function $f$.

\end{abstract}


\setcounter{equation}{0}
\setcounter{section}{0}

\section{Introduction}
\setcounter{equation}{0}
\setcounter{thm}{0}

The equation
\begin{equation}\label{u^m-cauchy-problem}
u_t=\la\phi_m(u)\quad\mbox{ in }\R^N\times (0,T)
\end{equation}
where $\phi_m(u)=u^m/m$ for $m\ne 0$ and $\phi_m(u)=\la\log u$
for $m=0$ arises in many physical models such as the flow of
gases through porous media \cite{A}, \cite{P}. When $m=1$, 
\eqref{u^m-cauchy-problem} is the heat equation. When $m=0$ and $N=1$,
the equation \eqref{u^m-cauchy-problem} arises as the limiting 
density distribution of two 
gases moving against each other and obeying the Boltzmann equation 
\cite{K}, as the diffusive limit for finite velocity Boltzmann 
kinetic models \cite{LT}, and in the model of viscous liquid film 
lying on a solid surface and subjecting to long range Van der Waals 
interactions with the fourth order term being neglected \cite{G}, \cite{WD}. 
When $m=0$ and $N=2$, \eqref{u^m-cauchy-problem} arises as the Ricci 
flow on the complete surface $\R^2$ \cite{W1}, \cite{W2}. We refer 
the reader to the book \cite{V3} by J.L.~Vazquez for the basics of 
the above equation and the books \cite{DK}, \cite{V2}, 
by P.~Daskalopoulos, C.E.~Kenig, and J.L.~Vazquez for the recent 
research results on \eqref{u^m-cauchy-problem}.

As observed by J.L.~Vazquez \cite{V1} there is a great difference in the 
behaviour of the solutions of \eqref{u^m-cauchy-problem} for 
$m>(N-2)_+/N$ and for $m\le (N-2)_+/N$. For example for $m>(N-2)_+/N$ 
there exists global $L^1(\R^N)$ solution of \eqref{u^m-cauchy-problem} while 
for $0<m\le (N-2)_+/N$ and $N\ge 3$ the $L^1(\R^N)$ solutions of 
\eqref{u^m-cauchy-problem} vanish in a finite time. For $m\le -1$ 
and $N=1$ there exists no finite mass solution of 
\eqref{u^m-cauchy-problem}. 

In \cite{DS1} P.~Daskalopoulos and N.~Sesum proved the convergence 
of the rescaled solution of \eqref{u^m-cauchy-problem} to the 
rescaled Barenblatt solution of \eqref{u^m-cauchy-problem} near 
the extinction time for the case $0<m\le (N-2)_+/N$, $N>2$, with 
initial data that behaves like $O(|x|^{-2/(1-m)})$ as $|x|\to\infty$. 
Extinction behaviour of the solution of 
\begin{equation}\label{eq-cases-main-cauchy-problem}
\begin{cases}
u_t=\La\log u\qquad\mbox{ in }\R^N\times(0,T),\\
u(x,0)=u_0(x)\quad\mbox{ in }\R^N
\end{cases}
\end{equation}
for the case $N=2$ was studied by S.Y.~Hsu \cite{Hs2}, \cite{Hs3},
P.~Daskalopoulos, M.A.~del Pino and N.~Sesum \cite{DP2}, \cite{DS2} 
and K.M.~Hui \cite{Hu3}.

In \cite{Hu2} K.M.~Hui proved that any solution of \eqref{eq-cases-main-cauchy-problem} 
with $N\geq 3$ and initial value satisfying the condition $0\le u_0(x)
\le C/|x|^2$ for all $|x|\ge R_0$ and some constants $R_0>0$, $C>0$,
will vanish in a finite time. It would be interesting to find the
extinction behaviour of the solution of 
\eqref{eq-cases-main-cauchy-problem} for the case $N\ge 3$.
In this paper we will study the asymptotic behaviour of solutions 
of \eqref{eq-cases-main-cauchy-problem} for $N=3$ and $N\ge 5$ near its 
extinction time under the assumption that the initial value $u_0$ 
is non-negative, locally integrable, and 
\begin{equation}\label{eq-growth-condition-of-initial-value}
u_0(x)\approx\frac{C}{|x|^2}\quad\mbox{ as }|x|\to\infty.
\end{equation}
Note that the self-similar Barenblatt solutions of 
\eqref{eq-cases-main-cauchy-problem} for $N\ge 3$ are given explicitly by
\begin{equation}\label{eq-self-similar-barenblett-solution}
B_{k}(x,t)=\frac{2(N-2)(T-t)_+^{\frac{N}{N-2}}}{k+(T-t)_+^{\frac{2}{N-2}}|x|^2},
\qquad k>0,
\end{equation}
which satisfy the growth condition 
\eqref{eq-growth-condition-of-initial-value}.

Note that the asymptotics of the solutions of the fast diffusion equation 
\eqref{u^m-cauchy-problem} for the case $0<m<1$ and the case $m=(N-4)/(N-2)$ 
is studied by A.~Blanchet, M.~Bonfort, J.~Dolbeault, G.~Grillo and 
J.L.~Vaquez in \cite{BBDGV} and \cite{BGV}. Sharp decay rate of the 
solutions of \eqref{u^m-cauchy-problem} for the case $0<m<1$ and 
$m\ne (N-4)/(N-2)$ is proved in \cite{BDGV}. A sketch that their proofs 
extended to the case $m=0$ is also given in appendix B of \cite{BBDGV}.
The proof in \cite{BBDGV}, \cite{BGV} and \cite{BDGV} used
Lypunov functional technique. 
On the other hand in this paper we will give a totally different proof of 
the asymptotics of the solutions of the fast diffusion equation 
\eqref{u^m-cauchy-problem} for the case $m=0$ and $N=3$ or $N>5$
near the extinction time using a modification of the potential technique 
of P.~Daskalopoulos and N.~Sesum \cite{DS1}. 

We will assume $N\ge 3$ for the rest of the paper.
We will also assume in the first part of this paper that the initial 
condition $u_0$ is trapped in between two Barenblatt solutions, i.e.,
\begin{equation}\label{eq-initial-u-0-trapped-by-barenblat}
B_{k_1}(x,0)\leq u_0(x)\leq B_{k_2}(x,0)
\end{equation}
for some constants $k_1>k_2>0$. We will consider first solutions of 
\eqref{eq-cases-main-cauchy-problem} which satisfy the condition
\begin{equation}\label{eq-initial-u-0-trapped-by-B}
B_{k_1}(x,t)\leq u(x,t) \leq B_{k_2}(x,t) \qquad \mbox{ in }\R^N\times (0,T). 
\end{equation}
Note that if $u$ is the maximal solution of 
\eqref{eq-cases-main-cauchy-problem} for $N\ge 3$ with initial value 
satisfying \eqref{eq-initial-u-0-trapped-by-barenblat}, then by the 
result of \cite{Hu2} $u$ satisfies \eqref{eq-initial-u-0-trapped-by-B}.

Consider the rescaled function
\begin{equation}\label{eq-rescaled-function}
\4{u}(x,s)=\frac{1}{(T-t)^{\frac{N}{N-2}}}u
\left(\frac{x}{(T-t)^{\frac{1}{N-2}}},t\right), \qquad s=-\log (T-t).
\end{equation}
By direct computation $\4{u}$ satisfies
\begin{equation}\label{eq-rescaled-main-equation-of-v}
\4{u}_s=\La\log \4{u}+\frac{1}{N-2}\mbox{div}
(x\cdot \4{u})\qquad \mbox{ in }\R^N\times (-\log T,\infty).
\end{equation}
By \eqref{eq-initial-u-0-trapped-by-B} and \eqref{eq-rescaled-function},
\begin{equation}\label{eq-initial-tilde-u-trapped-by-barenblat}
\4{B}_{k_1}(x)\leq \4{u}(x,s)\leq\4{B}_{k_2}(x)
\end{equation}
holds in $\R^N\times (-\log T,\infty)$ where
\begin{equation}\label{eq-rescaled-Barenblatt-23}
\4{B}_k(x)=\frac{2(N-2)}{k+|x|^2}.
\end{equation}
The main convergence results that we will prove in this paper are
the following.

\begin{thm}\label{lem-main-theorem-with-finite-integral-MS}
Let $N=3$ and let $u_0$ satisfy \eqref{eq-initial-u-0-trapped-by-barenblat} 
for some constant $k_1>k_2>0$. Suppose $u$ is a solution of 
\eqref{eq-cases-main-cauchy-problem} with initial value $u_0$ which 
satisfies \eqref{eq-initial-u-0-trapped-by-B}.
Then the rescaled function $\4{u}$ given by 
\eqref{eq-rescaled-function} converges uniformly on $\R^3$ and also 
in $L^1(\R^3)$ as $s\to\infty$ to the rescaled Barenblatt solution 
$\4{B}_{k_0}$ for some constant $k_0>0$ uniquely determined by 
\begin{equation}\label{initial-mean=0}
\int_{\R^N}(u_0(x)-B_{k_0}(x,0))\,dx=0.
\end{equation}
\end{thm}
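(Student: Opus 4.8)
The plan is to follow the potential-theoretic strategy of Daskalopoulos--Sesum, adapted to the logarithmic case $m=0$, $N=3$. First I would exploit the trapping \eqref{eq-initial-tilde-u-trapped-by-barenblat} to get compactness: the bounds $\4B_{k_1}\le\4u(\cdot,s)\le\4B_{k_2}$ give uniform $L^\infty_{loc}$ and, by parabolic regularity for the uniformly parabolic (after taking $\log$) equation \eqref{eq-rescaled-main-equation-of-v}, uniform $C^{2,1}_{loc}$ estimates away from where $\4u$ could degenerate; since $\4u$ is bounded below by $\4B_{k_1}>0$ on compacta, these hold on every compact set. Hence along any sequence $s_j\to\infty$ a subsequence of $\4u(\cdot,s+s_j)$ converges in $C^{2,1}_{loc}(\R^3\times\R)$ to an \emph{entire} solution $\4u_\infty$ of \eqref{eq-rescaled-main-equation-of-v}, still trapped between $\4B_{k_1}$ and $\4B_{k_2}$, and the uniform decay $\4u\le\4B_{k_2}=2(N-2)/(k_2+|x|^2)$ upgrades $C^{2,1}_{loc}$ convergence to uniform convergence on all of $\R^3$ and, via dominated convergence with the $L^1(\R^3)$ majorant $\4B_{k_2}$ (here $N=3$ forces $\4B_k\in L^1$; this is exactly where the dimension restriction bites), to $L^1(\R^3)$ convergence.

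The second step is to identify $\4u_\infty$ as a stationary solution, i.e.\ a rescaled Barenblatt profile. For this I would introduce the monotone (Lyapunov) quantity associated with \eqref{eq-rescaled-main-equation-of-v}: writing the equation in self-similar variables, the relative entropy or, following Daskalopoulos--Sesum, the \emph{potential} $\psi$ solving $\Delta\psi=\4u-\4B_{k_0}$ (well defined and decaying because $\4u-\4B_{k_0}\in L^1(\R^3)$ with, by \eqref{initial-mean=0} and conservation of mass along the rescaled flow, zero total integral) has a derivative in $s$ of one sign, forcing $\int|\D\psi|^2$ or the corresponding dissipation to tend to a limit and its time-derivative to tend to zero along $s_j$. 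In the limit this pins $\4u_\infty$ as a critical point, hence $\La\log\4u_\infty+\frac1{N-2}\mathrm{div}(x\,\4u_\infty)=0$; the only solutions of this ODE (after reduction to radial profiles via the symmetrization forced by the trapping, or directly by the classification of self-similar profiles) trapped between two Barenblatt solutions are the $\4B_k$ themselves. So $\4u_\infty=\4B_{k_\infty}$ for some $k_\infty\in[k_2,k_1]$.

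The third step is to show $k_\infty$ is independent of the sequence, which is where \eqref{initial-mean=0} enters decisively. The rescaled flow \eqref{eq-rescaled-main-equation-of-v} conserves the "mass" $\int_{\R^3}(\4u(x,s)-\4B_{k_0}(x))\,dx$ (the drift term is a divergence and $\La\log\4u$ integrates to zero given the decay), and at $s=-\log T$ this equals $\int_{\R^3}(u_0-B_{k_0}(\cdot,0))\,dx=0$ by the \emph{definition} of $k_0$ in \eqref{initial-mean=0} — so choosing $k_0$ this way makes the conserved quantity vanish for all $s$. Passing to the $L^1$ limit along $s_j$ gives $\int_{\R^3}(\4B_{k_\infty}-\4B_{k_0})\,dx=0$, and since $k\mapsto\int_{\R^3}\4B_k\,dx=2(N-2)\int_{\R^3}(k+|x|^2)^{-1}dx$ is strictly decreasing in $k$, we conclude $k_\infty=k_0$. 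As the limit is the same for every sequence, the full convergence $\4u(\cdot,s)\to\4B_{k_0}$ uniformly and in $L^1(\R^3)$ follows.

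The main obstacle I anticipate is the rigorous monotonicity and, more delicately, the \emph{strict} dissipation estimate that rules out the limit being a nonstationary eternal solution: one needs a genuine Lyapunov functional whose dissipation controls the distance to the family $\{\4B_k\}$, together with enough decay of $\4u-\4B_k$ (uniform in $s$, not just locally) to justify all integrations by parts — the logarithmic nonlinearity makes the natural entropy $\int(\4u\log\4u-\ldots)$ borderline integrable, so the precise tail control coming from the upper barrier $\4B_{k_2}$ and a matching lower-tail argument will be the technical heart of the proof.
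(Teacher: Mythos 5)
Your overall skeleton (compactness of the rescaled orbit, identification of the $\omega$-limit as a Barenblatt profile, pinning the parameter by a conserved integral) matches the paper's, but two points need to be flagged. First, a factual error that touches the very reason the theorem is restricted to $N=3$: $\4{B}_k\notin L^1(\R^3)$, since $\int_{\R^3}(k+|x|^2)^{-1}\,dx=\omega_3\int_0^\infty r^2(k+r^2)^{-1}\,dr=\infty$. What \emph{is} integrable in dimension $3$ is the difference $\4{B}_{k_2}-\4{B}_{k_1}=O(|x|^{-4})$, which by the trapping \eqref{eq-initial-tilde-u-trapped-by-barenblat} dominates $|\4{u}-\4{B}_{k_0}|$; for $N\ge 4$ even this difference fails to be integrable, which is why Theorem \ref{lem-main-theorem-MS} works in a weighted space. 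Consequently your dominated-convergence majorant must be $\4{B}_{k_2}-\4{B}_{k_1}$, not $\4{B}_{k_2}$, and your final step "since $k\mapsto\int_{\R^3}\4{B}_k\,dx$ is strictly decreasing" is vacuous as written (both integrals are $+\infty$); you must instead argue with the finite, strictly monotone quantity $k\mapsto\int_{\R^3}(\4{B}_k-\4{B}_{k_0})\,dx$. These are repairable, but they show the dimension restriction is being used for the wrong reason in your write-up.

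Second, and more seriously, the central step — identifying the $\omega$-limit as stationary and excluding a non-stationary eternal limit — is exactly the step you leave unproved: you name a Lyapunov/potential functional but neither define it precisely nor establish its monotonicity or the strict dissipation estimate, and you yourself concede this is "the technical heart." The paper deliberately avoids the entropy route (it attributes that method to \cite{BBDGV} and advertises "a totally different proof"). Its mechanism is the $L^1$-machinery: an $L^1$-contraction for solutions bounded below by a Barenblatt solution (Lemmas \ref{lem-before-L-1-contraction-46} and \ref{eq-L-1-contraction-principle}, proved via a Newtonian-potential comparison for $w=\int|\log u-\log v|$); a \emph{strict} decrease lemma (Lemma \ref{lem-first-lemma-for-aymptotic-case}) showing $\|\4{u}-\4{v}\|_{L^1}$ drops strictly unless $\4{u}_0-\4{v}_0$ has one sign, whose proof requires the delicate boundary-flux control of Lemma \ref{lem-L-1-integrabilities-of-p-456}; and the Osher--Ralston rigidity lemma (Lemma \ref{Osher-and-Ralston-Lemma-0}), which forces any $L^1$ subsequential limit to keep constant $L^1$-distance to every stationary $\4{B}_k$. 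Combining the last two pins the limit as some $\4{B}_{k_\infty}$, and the conserved integral gives $k_\infty=k_0$. Without either a proved dissipation inequality or this contraction/rigidity machinery, your argument does not actually rule out the limit being a non-stationary eternal solution, so the key identification step is missing.
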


\begin{thm}\label{lem-main-theorem-MS}
Let $N\ge 5$ and let $u$ be a solution of \eqref{eq-cases-main-cauchy-problem} 
with initial value $u_0$ satisfying 
\eqref{eq-initial-u-0-trapped-by-barenblat} 
and 
\begin{equation}\label{u0-B-in-l1} 
u_0=B_{k_0}+f
\end{equation}
for some constants $k_1>k_2>0$, $k_0>0$ and $f\in L^1(\R^N)$ where $B_{k_0}$ 
is the Barenblatt solution. Suppose $u$ satisfies 
\eqref{eq-initial-u-0-trapped-by-B}. Let $\4{u}$ be the rescaled 
function given by 
\eqref{eq-rescaled-function}. Then $\4{u}$ converges uniformly on $\R^N$ 
and in the weighted space $L^1(\4{B}^{\frac{N-4}{2}},\R^N)$ as $s\to\infty$ 
to the rescaled Barenblatt solution $\4{B}_{k_0}$.
\end{thm}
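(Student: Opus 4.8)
The plan is to follow the potential method of Daskalopoulos and Sesum, with the key weighted estimate modified so that it still closes for $N\ge5$. Put $w(x,s)=\4{u}(x,s)-\4{B}_{k_0}(x)$. By \eqref{u0-B-in-l1} and the scaling \eqref{eq-rescaled-function}, $w(\cdot,-\log T)\in L^1(\R^N)$ (with $L^1$-norm $\|f\|_{L^1}$), while \eqref{eq-initial-u-0-trapped-by-B} and \eqref{eq-initial-tilde-u-trapped-by-barenblat} give the uniform bound $|w(x,s)|\le\4{B}_{k_2}(x)-\4{B}_{k_1}(x)\le C(1+|x|)^{-4}$. Since $\4{B}_{k_0}$ solves \eqref{eq-rescaled-main-equation-of-v} stationarily, $w$ satisfies
\begin{equation*}
w_s=\La\log(\4{u}/\4{B}_{k_0})+\tfrac1{N-2}\,\mbox{div}(xw)\qquad\mbox{in }\R^N\times(-\log T,\infty).
\end{equation*}
The bounds \eqref{eq-initial-u-0-trapped-by-B} make \eqref{eq-rescaled-main-equation-of-v} uniformly parabolic with coefficients bounded uniformly in time on compact subsets of $\R^N\times\R$, so by parabolic Schauder and Krylov--Safonov estimates $\{\4{u}(\cdot,\cdot+s)\}_{s}$ is precompact in $C^{2,1}_{\mathrm{loc}}$; along any $s_j\to\infty$ a subsequence yields $\4{u}(\cdot,\cdot+s_j)\to\4{u}_\infty$, an eternal solution of \eqref{eq-rescaled-main-equation-of-v} with $\4{B}_{k_1}\le\4{u}_\infty\le\4{B}_{k_2}$.

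I would then show $\4{u}_\infty$ is stationary. The Lyapunov functional $\int_{\R^N}\big(\log(\4{B}_{k_0}/\4{u})+\frac{|x|^2}{2(N-2)}(\4{u}-\4{B}_{k_0})\big)\,dx$ that is natural for $N=3$ diverges when $N\ge5$; in its place one uses its truncation to balls $B_R$, controls the $\partial B_R$-fluxes via the decay $|w|\le C|x|^{-4}$, and lets $R\to\infty$ to obtain a nonnegative dissipation rate that is integrable over $s\in(-\log T,\infty)$. Hence $\partial_s\4{u}_\infty\equiv0$, and by the classification of the stationary solutions of \eqref{eq-rescaled-main-equation-of-v} lying between two Barenblatt profiles, $\4{u}_\infty=\4{B}_k$ for some $k=k(s_j)\in[k_2,k_1]$.

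The heart of the proof is to show $k=k_0$, and this is where $N\ge5$ is essential. Introduce the potential $\Phi(\cdot,s)$ defined by $\La\Phi=w$ with $\Phi(x,s)\to0$ as $|x|\to\infty$ (for $w=O(|x|^{-4})$ and $N\ge5$ this has a unique solution, with $\Phi=O(|x|^{-2})$). Using $\La(x\cdot\D\Phi)=x\cdot\D(\La\Phi)+2\La\Phi$ one checks that $\Phi$ satisfies
\begin{equation*}
\Phi_s=\log(\4{u}/\4{B}_{k_0})+\Phi+\tfrac1{N-2}\,x\cdot\D\Phi .
\end{equation*}
Because $w(\cdot,-\log T)\in L^1(\R^N)$, its Newtonian potential decays like $|x|^{2-N}$, which is $o(|x|^{-2})$ precisely when $N\ge5$; thus the leading $|x|^{-2}$-coefficient of $\Phi$ vanishes at $s=-\log T$, and matching the $|x|^{-2}$ terms in the displayed equation shows it stays zero. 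On the other hand the potential of $\4{B}_k-\4{B}_{k_0}$ has leading term $\tfrac{(N-2)(k-k_0)}{N-4}|x|^{-2}$, so passing to the limit along $s_j$ forces $k=k_0$. Equivalently -- and this is the form that produces the stated convergence -- one shows that $\|w(\cdot,s)\|_{L^1(\4{B}^{\frac{N-4}{2}},\R^N)}$ is eventually finite and nonincreasing in $s$: by Kato's inequality applied to $\La\log(\4{u}/\4{B}_{k_0})$ together with the superharmonicity $\La(\4{B}_{k_0}^{\frac{N-4}{2}})<0$, the exponent $\frac{N-4}{2}$ being chosen exactly so that this superharmonic gain absorbs the flux from $\frac1{N-2}\mbox{div}(xw)$; and then, since $|\4{B}_k-\4{B}_{k_0}|\,\4{B}_{k_0}^{\frac{N-4}{2}}$ is of order $|x|^{-N}$ at infinity with non-integrable tail on $\R^N$ when $k\ne k_0$, Fatou's lemma rules out $k\ne k_0$.

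Once every subsequential limit is $\4{B}_{k_0}$ we get $\4{u}(\cdot,s)\to\4{B}_{k_0}$ in $C_{\mathrm{loc}}(\R^N)$; combined with $|\4{u}(x,s)-\4{B}_{k_0}(x)|\le\4{B}_{k_2}(x)+\4{B}_{k_0}(x)=O(|x|^{-2})$ this upgrades to uniform convergence on $\R^N$. For the $L^1(\4{B}^{\frac{N-4}{2}},\R^N)$-convergence, split the integral over $\{|x|\le R\}$ and $\{|x|>R\}$: the first part tends to $0$ by uniform convergence and $\int_{B_R}\4{B}^{\frac{N-4}{2}}dx<\infty$, the second is small uniformly in $s$ by the nonincreasing weighted bound of the previous step, and one then lets $s\to\infty$ and $R\to\infty$. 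The main obstacle is precisely this weighted estimate: the integrations by parts at spatial infinity must be justified (note that the unweighted mass $\int w\,dx$ has flux only $O(R^{N-4})$ across $\partial B_R$, hence is \emph{not} directly controlled for $N\ge5$), and the exponent $\frac{N-4}{2}$ is the unique one making the superharmonic weight dominate the divergence-form drift -- which is also why $N=4$ is excluded (there the weight degenerates to a constant and the estimate becomes borderline) and why $N=3$ must instead be handled through the conserved mass $\int_{\R^N}(u_0-B_{k_0})\,dx$ of Theorem~\ref{lem-main-theorem-with-finite-integral-MS}.
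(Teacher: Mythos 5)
Your key mechanism is the paper's: the weighted $L^1$ estimate with weight $\4{B}^{\alpha}$, $\alpha=\frac{N-4}{2}$, resting on the pointwise inequality $\4{a}\,\La\4{B}^{\alpha}-\frac{1}{N-2}x\cdot\nabla\4{B}^{\alpha}\le -\frac{k_2N(N-4)}{2(N-2)(k_2+|x|^2)}\4{B}^{\alpha}<0$, is exactly what drives Lemmas~\ref{lem-initial-L-tilde-B-to-L-tilde-B} and \ref{lem-weighted-L-1-contraction-1234}, and your overall architecture (compactness, identification of the subsequential limit, upgrade to uniform and weighted-$L^1$ convergence) matches the paper's. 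Your Fatou argument that $k\ne k_0$ is impossible because $|\4{B}_k-\4{B}_{k_0}|\4{B}^{\alpha}\sim|x|^{-N}$ has divergent tail integral is a legitimate, arguably cleaner identification than the paper's, which instead plays the Osher--Ralston constancy lemma (Lemma~\ref{Osher-and-Ralston-Lemma-1}) against the strict-decrease lemma. However, two of your steps have genuine gaps. First, you invoke ``the classification of the stationary solutions of \eqref{eq-rescaled-main-equation-of-v} lying between two Barenblatt profiles''; no such classification is proved or cited anywhere, and your truncated-Lyapunov argument for stationarity of the eternal limit is only a sketch --- the integrability over $s\in(-\log T,\infty)$ of the dissipation after cutting off to $B_R$ and discarding the $\partial B_R$ fluxes is precisely what would need proof. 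The paper avoids both issues entirely: it never shows the limit is stationary and classifies nothing; it deduces directly from Lemmas~\ref{lem-weighted-L-1-contraction-1234} and \ref{Osher-and-Ralston-Lemma-1} that the limit must be $\4{B}_{k_0}$.

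Second, your tail estimate for the weighted $L^1$ convergence does not close as stated. A nonincreasing total norm $\int_{\R^N}|\4{u}-\4{B}_{k_0}|\4{B}^{\alpha}\,dx$ gives no control of $\int_{|x|>R}|\4{u}-\4{B}_{k_0}|\4{B}^{\alpha}\,dx$ uniformly in $s$ (mass can migrate outward), and the pointwise bound $|\4{u}-\4{B}_{k_0}|\le C|x|^{-4}$ only yields an integrand of size $|x|^{-N}$, whose tail integral diverges logarithmically; so tightness is a real obstruction here, not a formality. This is exactly what Lemma~\ref{lem-claim-4-4-in-DS} (the analogue of Claim 4.4 of \cite{DS1}, proved via the unweighted $L^1$-contraction of Corollary~\ref{cor-tilde-u-L-1-contraction-987}) supplies, and your sketch needs an equivalent ingredient. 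Relatedly, the claim that the weighted norm is ``nonincreasing'' itself requires a bootstrap: one first proves the cruder bound with an additive $Cs$ (Lemma~\ref{lem-initial-L-tilde-B-to-L-tilde-B}), which gives the space-time integrability needed to show that the annular error terms vanish as $R\to\infty$; without that step the integration by parts at spatial infinity is not justified.
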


The plan of the paper is as follows. In section 2 we will establish 
some a priori estimates for the solutions of 
\eqref{eq-cases-main-cauchy-problem}. We will prove 
Theorem~\ref{lem-main-theorem-with-finite-integral-MS} 
and Theorem~\ref{lem-main-theorem-MS} in sections three and four
respectively. In section five we will improve Theorem~\ref{lem-main-theorem-MS}  
by removing the condition \eqref{eq-initial-u-0-trapped-by-barenblat} on the 
initial data.

We start with some definitions. We say that $u$ is a solution of 
\eqref{eq-cases-main-cauchy-problem} in $\R^N\times (0,T)$ if
$u>0$ in $\R^N\times (0,T)$ and $u$ satisfies 
\eqref{eq-cases-main-cauchy-problem} in the classical sense in 
$\R^N\times (0,T)$ with
$$
u(\cdot,t)\to u_0\quad\mbox{ in }L_{loc}^1(\R^N) \quad\mbox{ as }t\to 0.
$$
We say that $u$ is a maximal solution of 
\eqref{eq-cases-main-cauchy-problem} in $\R^N\times (0,T)$ if 
$u$ is a solution of 
\eqref{eq-cases-main-cauchy-problem} in $\R^N\times (0,T)$ and
$u\ge v$ for any solution $v$ of 
\eqref{eq-cases-main-cauchy-problem} in $\R^N\times (0,T)$. 
For any $R>0$ and $x_0\in\R^N$, let $B_R(x_0)=\{x\in\R^N:|x-x_0|<R\}$.
Let $\omega_N$ be the surface area of the unit sphere $S^{N-1}$ in $\R^N$.
For any $a\in\R$, let $a_{\pm}=\max (\pm a,0)$. We will assume $N\ge 3$ 
for the rest of the paper.

For any $\alpha>0$, we define the weighted $L^1$-space with weight 
$\4{B}^{\alpha}(x):=\left(\frac{2(N-2)}{k_2+|x|^2}\right)^{\alpha}$ as
\begin{equation*}
L^1(\4{B}^{\alpha},\R^N):=\left\{f\Big|\int_{\R^N}f(x)\4{B}^{\alpha}(x)
\,dx<\infty\right\}.
\end{equation*}

\section{Preliminary Estimates}
\setcounter{equation}{0}
\setcounter{thm}{0}

In this section we will establish some a priori estimates for the 
solutions of \eqref{eq-cases-main-cauchy-problem}.

\begin{lem}\label{lem-preliminary-esti-first-1}
Let $u$, $v$ be two solutions of \eqref{eq-cases-main-cauchy-problem} 
with initial values $u_0$, $v_0$ respectively. Assume in addition that 
$u$, $v\geq B$, for some Barenblatt solution $B=B_k$ given by 
\eqref{eq-self-similar-barenblett-solution}. Then there exists a 
constant $C>0$ such that 
\begin{equation*}
(i) \quad \left(\int_{B_{R}(x)}(u-v)_+(y,t)\,dy\right)^{\frac{1}{2}} \leq
 \left(\int_{B_{2R}(x)}(u_0-v_0)_+(y)\,dy\right)^{\frac{1}{2}}
+CR^{\frac{N-2}{2}}\sqrt{T}
\end{equation*} 
and 
\begin{equation*}
(ii) \quad \left(\int_{B_{R}(x)}|u-v|(y,t)\,dy\right)^{\frac{1}{2}} \leq 
\left(\int_{B_{2R}(x)}|u_0-v_0|(y)\,dy\right)^{\frac{1}{2}}+CR^{\frac{N-2}{2}}
\sqrt{T}
\end{equation*}
holds for any $R\ge |x|+\sqrt{k}\delta^{-\frac{1}{N-2}}$, $x\in\R^N$, 
$0<t\le T-\delta$, and $0<\delta<T$.
\end{lem}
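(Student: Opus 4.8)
\textbf{Strategy.} The plan is to localize the standard $L^1$-contraction estimate for the logarithmic diffusion equation by testing the equation satisfied by the difference $w=u-v$ against a suitable cutoff function, and then to control the boundary error term using the lower bound $u,v\ge B_k$, which forces $\log u,\log v$ to be comparable to $\log B_k$ near the boundary of the ball. Since $u$ and $v$ both lie above the Barenblatt solution $B_k$ and $B_k(y,t)\sim 2(N-2)(T-t)^{2/(N-2)}/|y|^2$ for large $|y|$, on the annulus where $|y|\approx R$ and $R$ is large (quantitatively $R\ge |x|+\sqrt k\,\delta^{-1/(N-2)}$, which guarantees $B_k$ is bounded below there on $0<t\le T-\delta$) we have a definite lower bound on $u$ and $v$, hence an upper bound on $|\D\log u-\D\log v|$ in an integrated sense; this is what produces the term $CR^{(N-2)/2}\sqrt T$ after the square-root is taken. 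I will prove (i); part (ii) follows by applying (i) to the pair $(u,v)$ and to the pair $(v,u)$ and adding, since $|u-v|=(u-v)_++(v-u)_+$.

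\textbf{Key steps.} First I would fix a smooth cutoff $\vp=\vp_R$ with $\vp\equiv 1$ on $B_R(x)$, $\supp\vp\subset B_{2R}(x)$, $0\le\vp\le 1$, and $|\D\vp|\le C/R$, $|\La\vp|\le C/R^2$. Subtracting the two equations gives $w_t=\La(\log u-\log v)$ in the classical sense. Multiplying by $\mathrm{sgn}_+(w)\,\vp$ and integrating over $\R^N$ — more precisely, approximating $\mathrm{sgn}_+$ by smooth nondecreasing functions and passing to the limit, as in the usual Kato-type argument — I obtain
\begin{equation*}
\frac{d}{dt}\int (u-v)_+\,\vp\,dy\le\int (\log u-\log v)_+\,\La\vp\,dy,
\end{equation*}
where I have used that $(\log u-\log v)$ has the same sign as $(u-v)$ and the Kato inequality $\mathrm{sgn}_+(w)\La(\log u-\log v)\le\La(\log u-\log v)_+$ in the sense of distributions. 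Because $\La\vp$ is supported in the annulus $A_R=B_{2R}(x)\setminus B_R(x)$ and there $u,v\ge B_k\ge c_0>0$ (by the hypothesis on $R$ and the explicit form of $B_k$, valid for all $0<t\le T-\delta$), on $A_R$ we have $(\log u-\log v)_+\le C\,(u-v)_+$ with $C=1/c_0$; hence the right-hand side is bounded by $\frac{C}{R^2}\int_{A_R}(u-v)_+\,dy\le\frac{C}{R^2}\int_{B_{2R}(x)}(u+v)\,dy$. Using once more that $u,v\le$ (a Barenblatt-type bound) or simply the a priori growth $u,v\le C/|y|^2$ for large $|y|$, the last integral over $B_{2R}(x)$ is at most $C R^{N-2}$; so the right-hand side is $\le C R^{N-4}\cdot R^{2}/R^{2}$... more carefully $\frac{C}{R^2}\cdot R^{N-2}=CR^{N-4}$, which after integrating in time over $[0,t]\subset[0,T]$ and taking square roots yields the claimed term. (If the cruder bound $\int_{B_{2R}}(u+v)\le CR^{N-2}$ is not available, one uses instead $(\log u-\log v)_+\le C(u-v)_+$ together with $0\le(u-v)_+\le u\le C/|y|^2$ directly, giving $\int_{A_R}(u-v)_+\le CR^{N-2}$ as well.)

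\textbf{The integrated inequality.} Integrating the differential inequality from $0$ to $t$ gives
\begin{equation*}
\int_{B_R(x)}(u-v)_+(y,t)\,dy\le\int_{B_{2R}(x)}(u_0-v_0)_+(y)\,dy+C R^{N-4}\,T.
\end{equation*}
Finally I apply the elementary inequality $\sqrt{a+b}\le\sqrt a+\sqrt b$ for $a,b\ge0$, obtaining
\begin{equation*}
\left(\int_{B_R(x)}(u-v)_+(y,t)\,dy\right)^{1/2}\le\left(\int_{B_{2R}(x)}(u_0-v_0)_+(y)\,dy\right)^{1/2}+C R^{(N-4)/2}\sqrt T .
\end{equation*}
Here the exponent on $R$ should match the statement; the bookkeeping of the powers of $R$ (whether the annulus volume $R^N$ times $|\La\vp|\sim R^{-2}$ times the pointwise size $R^{-2}$ of $u-v$ gives $R^{N-4}$, versus the $R^{N-2}$ in the statement) is exactly the point to be checked, and depending on which bound on $\log u-\log v$ is used on $A_R$ one lands on the stated $R^{(N-2)/2}$; I would reconcile this by using the bound $(\log u-\log v)_+\le C(u-v)_+$ only, so that the annular integral is $\int_{A_R}(u-v)_+\le\int_{A_R}u\le C R^{-2}\cdot R^{N}=CR^{N-2}$ before multiplying by $|\La\vp|\sim R^{-2}$ — wait, that would again give $R^{N-4}$. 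The cleanest route, and the one matching the statement, is to \emph{not} multiply by $\La\vp$ but to integrate by parts once more: $\int(\log u-\log v)_+\La\vp = -\int\D(\log u-\log v)_+\cdot\D\vp$, bound $|\D(\log u-\log v)_+|$ in $L^1(A_R)$ using an energy estimate for $\log u$ coming from Lemma-type bounds, and note $|\D\vp|\sim R^{-1}$ on $A_R$ of volume $\sim R^N$, giving $R^{N-2}$ after the time integration and the square root.

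\textbf{Main obstacle.} The delicate point is precisely the treatment of the boundary term: one needs a quantitative control of $\log u-\log v$ (or its gradient) on the annulus $A_R$ that is uniform in $t\in(0,T-\delta]$, and this is where the hypothesis $u,v\ge B_k$ together with the lower restriction $R\ge|x|+\sqrt k\,\delta^{-1/(N-2)}$ is essential — it is exactly the condition under which $B_k(\cdot,t)$ stays bounded away from $0$ on $A_R$ for all such $t$, so that $|\log u-\log v|\le C|u-v|$ there. Making the power of $R$ come out to $(N-2)/2$ rather than $(N-4)/2$ requires doing the integration by parts so that the factor that "costs" is $|\D\vp|\sim R^{-1}$ rather than $|\La\vp|\sim R^{-2}$, at the price of needing an $L^1$ gradient bound for $\log u-\log v$ on $A_R$; establishing that gradient bound (via a local energy estimate for $\log u$, using again the lower bound $u\ge B_k$) is the real work in this lemma.
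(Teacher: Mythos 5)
Your proposal has a genuine gap, and it sits exactly where you flagged uncertainty. The key point you are missing is that the lemma assumes only a \emph{lower} bound $u,v\ge B_k$, so your linearization $(\log u-\log v)_+\le C(u-v)_+$ on the annulus leaves you needing to bound $\int_{A_R}(u-v)_+$, and for that you invoke an upper bound ``$u,v\le C/|y|^2$'' that is not among the hypotheses. Moreover the ``constant'' $c_0$ in your linearization is not a constant: on the annulus $|y|\approx R$ one has $B_k(y,t)\approx (T-t)/R^2$, so $1/c_0\approx R^2/(T-t)$, which degenerates both in $R$ and as $t\to T$; this is precisely estimate \eqref{eq-condition-result-of-B--1} in the paper and it must be tracked, not absorbed into $C$. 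Your fallback (a second integration by parts plus an $L^1$ gradient bound for $\log u-\log v$ on $A_R$) is not carried out and is not needed.

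The paper's actual device is the sublinear inequality $\log(1+x)\le C\sqrt{x}$, giving
\begin{equation*}
(\log u-\log v)_+\le C\,\frac{(u-v)_+^{1/2}}{v^{1/2}}\le C\,B_k^{-1/2}(u-v)_+^{1/2},
\end{equation*}
which requires only the lower bound. Pairing this with $|\La\eta_R^4|$ and Cauchy--Schwarz splits the boundary term into $\bigl(\int(u-v)_+\eta_R^4\bigr)^{1/2}$ times a factor $\bigl(\int_{A_R}\eta_R^{-4}B_k^{-1}|\La\eta_R^4|^2\bigr)^{1/2}\le CR^{(N-2)/2}(T-t)^{-1/2}$ that involves only $B_k$ and the cutoff. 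One then has the closed differential inequality $Y'(t)\le CR^{(N-2)/2}(T-t)^{-1/2}\,Y(t)^{1/2}$ for $Y(t)=\int(u-v)_+\eta_R^4$, and integrating (using $\int_0^T(T-s)^{-1/2}ds\le 2\sqrt{T}$) produces exactly the ``square root $\le$ square root $+\;CR^{(N-2)/2}\sqrt{T}$'' form of the conclusion with $C$ independent of $\delta$. In your version the square roots appear only as an afterthought via $\sqrt{a+b}\le\sqrt a+\sqrt b$, which is a sign that the structure of the estimate has not been captured. To repair your argument you would need to replace the linear bound on the log-difference by the square-root bound and run the Cauchy--Schwarz/ODE step above; as written, the proof does not close.
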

\begin{proof}
We will use a modification of the argument of \cite{Hu2} to prove 
the lemma. Without loss of generality we may assume that $x=0$. Let 
$\eta\in C^{\infty}_0(\R^N)$, $0\leq\eta\leq 1$, be such that 
$\eta(x)=1$ for $|x|\leq 1$, $\eta=0$ for $|x|\geq 2$ and 
$\eta_R(x)=\eta(x/R)$ for any $R>0$. Then 
$|\La \eta_R|\leq \frac{C_1}{R^2}$ and $|\nabla \eta_R|\leq 
\frac{C_1}{R}$ for some constant $C_1>0$. By the Kato inequality 
\cite{K},
\begin{equation}\label{eq-Kato-inequality-1}
\frac{\partial}{\partial t}\int_{\R^N}(u-v)_+(x,t)\eta^4_R(x)\,dx 
\leq \int_{\R^N}(\log u-\log v)_+(x,t)\La\eta^4_R(x)\,dx\quad\forall 
0<t<T. 
\end{equation} 
Since $v\geq B_k$ for some Barenblatt solution $B_k$, 
\begin{equation}\label{eq-aligned-some-inequality-of-log-1}
(\log u-\log v)_+=\left(\log\left(\frac{u}{v}\right)\right)_+
\leq C\left(\frac{u}{v}-1\right)_+^{\frac{1}{2}}
\leq C\frac{(u-v)_+^{\frac{1}{2}}}{v_+^{\frac{1}{2}}}
\leq CB_k^{-\frac{1}{2}}(u-v)_+^{\frac{1}{2}}
\end{equation}
for some generic constant $C>0$. By \eqref{eq-Kato-inequality-1},
\eqref{eq-aligned-some-inequality-of-log-1}, and the H\"older 
inequality,
\begin{align}\label{eq-aligned-some-inequality-of-log-1245}
&\frac{\partial}{\partial t}\int_{\R^N}(u-v)_+(x,t)\eta^4_R(x)\,dx
\nonumber\\ 
\leq&C\int_{\R^N}(u-v)_+^{\frac{1}{2}}(x,t)B_k^{-\frac{1}{2}}(x)|
\La \eta^4_R|(x)\,dx\nonumber\\
\leq&C\left(\int_{\R^N}(u-v)_+(x,t)\eta^4_R(x)\,dx
\right)^{\frac{1}{2}}\left(\int_{R\leq |x|\leq 2R}\eta_R^{-4}B_k^{-1}|\La 
\eta^4_R|^2\,dx\right)^{\frac{1}{2}}\nonumber\\
\leq&C\left(\int_{\R^N}(u-v)_+(x,t)\eta^4_R(x)\,dx
\right)^{\frac{1}{2}}\left(\int_{R\leq |x|\leq 2R}B_k^{-1}
\left(32\eta_{R}^{2}|\La \eta_R|^2+288|\nabla \eta_R|^4\right)\,dx
\right)^{\frac{1}{2}}.
\end{align}
Since 
\begin{equation}\label{eq-condition-result-of-B--1}
(B_k(x,t))^{-1}\leq \frac{C|x|^{2}}{T-t}\quad\forall 
|x|\ge\sqrt{k}\delta^{-\frac{1}{N-2}}, 0\le t\le T-\delta, 0<\delta<T, 
\end{equation}
by \eqref{eq-aligned-some-inequality-of-log-1245} we have 
\begin{equation*}
\frac{\partial}{\partial t}\int_{\R^N}(u-v)_+(x,t)\eta^4_R(x)\,dx 
\leq C\frac{R^{\frac{N-2}{2}}}{(T-t)^{\frac{1}{2}}}
\left(\int_{\R^N}(u-v)_+(x,t)\eta^4_R(x)\,dx\right)^{\frac{1}{2}}
\end{equation*} 
for any $R^2\ge k\delta^{-\frac{2}{N-2}}$, $0<t\le T-\delta$, and
$0<\delta<T$. By integrating the above differential 
inequality with respect to $t$, we get $(i)$. Similarly, 
\begin{equation*}
\left(\int_{B_{R}(x)}(u-v)_-(x,t)\,dx\right)^{\frac{1}{2}} \leq 
\left(\int_{B_{2R}(x)}(u_0-v_0)_-(x,t)\,dx\right)^{\frac{1}{2}}
+CR^{\frac{N-2}{2}}\sqrt{T}.
\end{equation*} 
holds for any $R^2\ge k\delta^{-\frac{2}{N-2}}$, $0<t\leq T-\delta$, 
and $0<\delta<T$. $(ii)$ then follows by adding the above inequality 
with $(i)$.
\end{proof}

\begin{lem}\label{lem-before-L-1-contraction-46}
Let $u$, $v$ be two solutions of 
\eqref{eq-cases-main-cauchy-problem} with initial values 
$u_0$, $v_0$, respectively. Assume in addition that $u$, 
$v\ge B$, for some Barenblatt solution $B=B_k$ given by 
\eqref{eq-self-similar-barenblett-solution}. If 
$f=u_0-v_0\in L^1(\R^N)$, then $u(\cdot,t)-v(\cdot,t)\in L^1(\R^N)$ 
for all $t\in[0,T)$.
\end{lem}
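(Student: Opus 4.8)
The plan is to upgrade the local $L^1$ bound of Lemma~\ref{lem-preliminary-esti-first-1}(ii) to a global one by letting the ball radius tend to infinity, using that $u_0-v_0=f\in L^1(\R^N)$ to kill the first term on the right and the decay rate $R^{(N-2)/2}$ will be handled not directly (it blows up!) but rather by a more careful choice of cutoff, so in fact the main point is that the crude estimate in Lemma~\ref{lem-preliminary-esti-first-1} is not quite enough and must be refined. Concretely, fix $t\in[0,T)$ and choose $\delta$ with $0<\delta<T-t$. First I would rerun the computation of Lemma~\ref{lem-preliminary-esti-first-1}, but instead of a single cutoff supported on $B_{2R}$, track the quantity $I_R(t)=\int_{\R^N}(u-v)_+(x,t)\eta_R^4(x)\,dx$ and observe that the differential inequality it satisfies is
\begin{equation*}
\frac{d}{dt}I_R(t)\le C\,\frac{R^{\frac{N-2}{2}}}{(T-t)^{\frac12}}\,I_R(t)^{\frac12},
\end{equation*}
equivalently $\frac{d}{dt}\sqrt{I_R(t)}\le \frac{C}{2}R^{(N-2)/2}(T-t)^{-1/2}$, which integrates to $\sqrt{I_R(t)}\le \sqrt{I_R(0)}+CR^{(N-2)/2}\sqrt{T}$. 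This is exactly Lemma~\ref{lem-preliminary-esti-first-1}(i), and by itself it does \emph{not} give a uniform-in-$R$ bound.

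The key extra ingredient is that $u,v\ge B_k$ are both \emph{between} Barenblatt solutions in the situations of interest, or more precisely that $u-v$ itself already decays like the difference of Barenblatt solutions at spatial infinity. In the setting where the lemma is applied (cf.\ \eqref{eq-initial-u-0-trapped-by-B}), both $u$ and $v$ lie below some fixed Barenblatt solution $B_{k_2}$, so $|u-v|(x,t)\le B_{k_2}(x,t)\le C_t|x|^{-2}$ for $|x|$ large, and hence $\int_{R\le|x|\le 2R}|u-v|(x,t)\,dx\le C_t R^{N-2}\to\infty$ only — so that crude tail bound is likewise useless for $N\ge 3$. Therefore the honest route is: redo the Kato/H\"older estimate keeping the weight $B_k^{-1}$ but integrating the resulting ODE for the \emph{truncated} mass and then, crucially, observe that the constant $C$ multiplying $R^{(N-2)/2}\sqrt T$ comes with a factor that is small when we instead use a cutoff that transitions slowly, i.e.\ replace $\eta_R(x)=\eta(x/R)$ by $\eta$ supported on $B_{2R}\setminus B_R$ but with $|\nabla\eta_R|\le C/R$ throughout $B_{2R}$; the term $\int_{R\le|x|\le2R}B_k^{-1}|\nabla\eta_R|^4\,dx\le C R^{-4}\int_{R}^{2R}r^{-2}\cdot r^2\,r^{N-3}\,dr\cdot(T-t)\le CR^{N-4}(T-t)$. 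So in fact for $N\ge 5$ this tail term is $o(1)$ as $R\to\infty$! And for $N=3,4$ one uses instead the higher-power cutoff $\eta_R^4$ together with the already-available $L^1$ bound on a fixed large ball: split $\int_{|x|\le R}|u-v| = \int_{|x|\le R_0}|u-v| + \int_{R_0\le|x|\le R}|u-v|$ and on the outer annulus bound $|u-v|\le B_{k_2}\in L^1_{loc}$ with $\|B_{k_2}(\cdot,t)\|_{L^1(|x|\ge R_0)}\to 0$ as $R_0\to\infty$ since $B_{k_2}(x,t)\sim|x|^{-2}$ is \emph{not} in $L^1$ for $N\le 4$ — so that also fails, and one really must exploit $f\in L^1$.

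Putting this together, the clean argument is: by Lemma~\ref{lem-preliminary-esti-first-1}(ii) with this refined cutoff,
\begin{equation*}
\left(\int_{B_R}|u-v|(y,t)\,dy\right)^{\frac12}\le \left(\int_{\R^N}|f(y)|\,dy\right)^{\frac12}+C\left(\int_{R\le|x|\le2R}B_k^{-1}(x,t)\,|\nabla\eta_R|^4\,dx\right)^{\frac12}\sqrt{T},
\end{equation*}
and the second term is bounded by $CR^{(N-4)/2}\sqrt{T}$. For $N\ge 5$ this still grows, so one does \emph{not} send $R\to\infty$ in this inequality directly; instead one fixes $R$ finite, uses it to deduce $u(\cdot,t)-v(\cdot,t)\in L^1(B_R)$ for every $R$ with a bound, and then the \emph{monotonicity in time}: repeat the argument on $[t_0,t]$ with $u_0,v_0$ replaced by $u(\cdot,t_0),v(\cdot,t_0)$ to get a Grönwall-type recursion, or more simply note that the constant in front of $\sqrt T$ is independent of $R$ only after dividing by $R^{(N-2)/2}$, which suggests the true statement needs the inequality
$$\int_{B_R}|u-v|(y,t)\,dy\le 2\int_{\R^N}|f|\,dy + C R^{N-2}T$$
combined with the a priori decay $|u-v|\le 2B_{k_2}$ to conclude, via dominated convergence on each annulus and summation of a geometric-type series in dyadic radii $R_j=2^j R_0$, that $\sum_j \int_{R_j\le|x|\le R_{j+1}}|u-v|<\infty$. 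The hard part — and the step I expect to be the real obstacle — is precisely this last bookkeeping: showing the dyadic tail sum converges, which forces one to use both the $L^1$ input $\|f\|_1$ \emph{and} the Barenblatt trapping $|u-v|\le 2B_{k_2}$ together, since neither alone suffices in low dimensions; the pointwise bound controls the shape of the tail while $\|f\|_1<\infty$ pins down that the total mass of $(u-v)$ cannot escape to infinity. Once that is in hand, monotone convergence as $R\to\infty$ gives $u(\cdot,t)-v(\cdot,t)\in L^1(\R^N)$ for each $t\in[0,T)$, completing the proof.
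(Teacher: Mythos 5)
Your proposal does not close the argument, and the route you are pursuing cannot be made to work. The error term $CR^{(N-2)/2}\sqrt{T}$ in Lemma~\ref{lem-preliminary-esti-first-1} is not an artifact of a clumsy cutoff: it comes from $\int_{R\le|x|\le 2R}B_k^{-1}|\D\eta_R|^4\,dx$ with $B_k^{-1}(x,t)\le C|x|^2/(T-t)$, which is of order $R^2\cdot R^{-4}\cdot R^N/(T-t)=R^{N-2}/(T-t)$. Your computation of this integral as $CR^{N-4}(T-t)$ puts the factor $(T-t)$ in the wrong place and effectively uses $|x|^{-2}$ where $|x|^{+2}$ belongs; and even granting $R^{N-4}$, it tends to infinity for $N\ge 5$, contradicting your claim that the tail is $o(1)$ (you in fact contradict yourself on this point two paragraphs later). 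The concluding ``dyadic summation'' step is also doomed as stated: the pointwise bound $|u-v|\le 2B_{k_2}$ gives annular masses of order $R_j^{N-2}$, so the series diverges for every $N\ge3$, and you acknowledge that you do not know how to make the bookkeeping work. No refinement of the cutoff in the single-time estimate will help, because at a fixed time the only available control of $(\log u-\log v)_\pm$ on the annulus is through $B_k^{-1/2}(u-v)_\pm^{1/2}$, which is genuinely large there.

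The missing idea is the potential-theoretic step of the paper (adapted from Lemma 2.1 of \cite{DS1}). One introduces $w(x,t)=\int_0^t|\log u-\log v|(x,s)\,ds$; Kato's inequality and the equation give $\La w\ge -|f|$, so $w-Z$ is subharmonic, where $Z$ is the Newtonian potential of $|f|$. After showing $\rho^{-N}\int_{B_\rho}w\to0$ (this is the technical heart of the paper's proof, requiring the Green-identity and l'Hospital arguments), the mean value property yields $w\le Z$, hence $\int_{R\le|x|\le 2R}w\,dx\le CR^2\|f\|_{L^1}$. This $R^2$ gain on the \emph{time-integrated} logarithmic difference is exactly what cancels the $R^{-2}$ from $\La\eta_R$ in
\begin{equation*}
\int_{\R^N}|u-v|(\cdot,t)\,\eta_R\,dx\le\|f\|_{L^1(\R^N)}+\frac{C}{R^2}\int_{R\le|x|\le 2R}w\,dx\le C\|f\|_{L^1(\R^N)},
\end{equation*}
giving a bound uniform in $R$. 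Nothing in your proposal plays the role of $w$ or of the comparison with $Z$, and without it the tail term cannot be controlled.
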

\begin{proof}
We will use a modification of the proof of Lemma 2.1 of \cite{DS1} to prove 
the lemma. We introduce the potential function
\begin{equation*}
w(x,t)=\int_0^t|(\log u-\log v)(x,s)|\,ds\quad\forall 0<t\leq T-\delta.
\end{equation*}
By the Kato inequality \cite{K}, 
\begin{equation*}
\La|\log u-\log v|\geq \mbox{sign}(u-v)\La(\log u-\log v),
\end{equation*}
and so from equation \eqref{eq-cases-main-cauchy-problem}, we obtain
\begin{equation}\label{eq-E-by-H-1924}
\frac{\partial}{\partial t}|u-v|\leq \La|\log u-\log v|.
\end{equation}
Integrating the above inequality in time, and using that 
$|f|=|u_0-v_0|$, we obtain
\begin{equation}\label{eq-lowerbound-w=logu-logv-1}
\La w\geq -|f| \qquad \mbox{ in }\R^N\qquad\qquad\qquad\qquad\forall 0<t<T.
\end{equation}
Let
\begin{equation*}
Z(x)=\frac{1}{(N-2)\omega_N}\int_{\R^N}\frac{|f(y)|}{|x-y|^{N-2}}\,dy
\end{equation*}
denote the Newtonian potential of $|f|$ where $\omega_N$ is the surface 
area of the unit sphere $S^{N-1}$ in $\R^N$. Then by 
\eqref{eq-lowerbound-w=logu-logv-1},
\begin{equation}\label{eq-in-the-sense-of-distributions}
\La(w(\cdot,t)-Z)\geq 0 
\end{equation}
in the sense of distributions in $\R^N$ for any $0<t<T$. Next we 
would like to show that
\begin{equation}\label{eq-upperbound-newto-Z-A}
\int_{R\leq|x|\leq 2R}w(x,t)\,dx\leq \int_{R\leq|x|\leq 2R}Z(x)\,dx 
\qquad \forall R>0.
\end{equation}
In order to prove this estimate we first suppose that 
$f\in L^1(\R^N)\cap L^{\infty}(\R^N)$. By 
\eqref{eq-in-the-sense-of-distributions} and the mean value 
property for subharmonic functions,
\begin{equation}\label{eq-upperbound-newto-Z-1}
\begin{aligned}
w(x,t)&\leq Z(x)+\frac{N}{\omega_N\rho^N}
\int_{B_{\rho}(x)}\left(w(y,t)-Z(y)\right)\,dy\\
&\leq Z(x)+\frac{N}{\omega_N\rho^N}\int_{B_{\rho}(x)}w(y,t)\,dy
\end{aligned}
\end{equation}
holds for any $x\in\R^N$, $0<t<T$, and $\rho>0$. We claim that 
\begin{equation}\label{eq-lim-of-w-as-rho-to-infty}
\lim_{\rho\to\infty}\frac{1}{\rho^N}\int_{B_{\rho}(x)}w(y,t)\,dy=0 
\qquad \forall x\in\R^N, 0<t<T.
\end{equation}
In order to prove 
\eqref{eq-lim-of-w-as-rho-to-infty} it suffices to prove that
\begin{equation*}\label{eq-lim-of-w-1-w-2-as-rho-to-infty-result-1}
\lim_{\rho\to\infty}\frac{1}{\rho^N}I_1(\rho,t)=0 \quad \mbox{and} 
\quad \lim_{\rho\to\infty}\frac{1}{\rho^N}I_2(\rho,t)=0\qquad 
\forall 0<t<T
\end{equation*}
where
\begin{equation*}\left\{\begin{aligned}
&I_1(\rho,t)=\int_{0}^{t}\int_{B_{\rho}(x)}\left(\log u-\log v\right)_+
(y,s)\,dy\,ds\\
&I_2(\rho,t)=\int_{0}^{t}\int_{B_{\rho}(x)}\left(\log u-\log v\right)_-
(y,s)\,dy\,ds.\end{aligned}\right.
\end{equation*}
Since $u$ and $v$ are the solutions of 
\eqref{eq-cases-main-cauchy-problem}, by the Green Theorem (\cite{GT}) 
and an approximation argument,
\begin{align}\label{eq-aligned-kato-applying-the-green-second-identity-0}
&\frac{\partial}{\partial s}\int_{B_{\rho}(x)}\left( u-v\right)_+(y,s)
(\rho^2-|x-y|^2)\,dy\nonumber\\
=&\int_{B_{\rho}(x)\cap\{u>v\}}
\La\left(\log u-\log v\right)(y,s)(\rho^2-|x-y|^2)\,dy\nonumber\\
\leq&\int_{B_{\rho}(x)\cap\{u>v\}}
\left(\log u-\log v\right)(y,s)\La(\rho^2-|x-y|^2)\,dy\nonumber\\
&\qquad -\int_{\partial\left\{B_{\rho}(x)\cap\{u>v\}
\right\}}\left(\log u-\log v\right)(y,s)\frac{\partial}{\partial \nu}
(\rho^2-|x-y|^2)\,d\sigma_y\nonumber\\
\leq&-2N\int_{B_{\rho}(x)}
\left(\log u-\log v\right)_+(y,s)\,dy\nonumber\\
&\qquad +2\rho\int_{\partial B_{\rho}(x)}\left(\log u-\log v\right)_+
(y,s)\,d\sigma_y\qquad\qquad\forall 0<s<T
\end{align}
where $\frac{\partial}{\partial\nu}$ is the derivative with respect 
to the unit outer normal $\nu$ on $\partial\left\{B_{\rho}(x)
\cap\{u>v\}\right\}$. Integrating 
\eqref{eq-aligned-kato-applying-the-green-second-identity-0} with 
respect to $s$ over $(0,\tau)$, we have
\begin{align}\label{eq-aligned-kato-applying-the-green-second-identity-12345}
&\int_{B_{\rho}(x)}\left( u-v\right)_+(y,\tau)(\rho^2-|x-y|^2)\,dy\nonumber\\
&\qquad \qquad \qquad \leq \int_{B_{\rho}(x)}
\left( u_0-v_0\right)_+(\rho^2-|x-y|^2)\,dy\nonumber\\
&\qquad \qquad \qquad \quad -2N\int_{0}^{\tau}\int_{B_{\rho}(x)}
\left(\log u-\log v\right)_+(y,s)\,dy\,ds\nonumber\\
&\qquad \qquad \qquad \quad +2\rho\int_{0}^{\tau}\int_{\partial B_{\rho}(x)}
\left(\log u-\log v\right)_+(y,s)\,d\sigma_y\,ds\quad\forall 0<\tau<T.
\end{align}
Integrating \eqref{eq-aligned-kato-applying-the-green-second-identity-12345} 
with respect to $\tau$ over $(0,t)$,
\begin{align}
\label{eq-aligned-kato-applying-the-green-second-identity-12345678}
&\int_{0}^{t}\int_{B_{\rho}(x)}\left( u-v\right)_+(y,\tau)(\rho^2-|x-y|^2)
\,dyd\tau\nonumber\\
&\qquad \qquad \qquad \leq T\int_{B_{\rho}(x)}\left( u_0-v_0\right)_+
(\rho^2-|x-y|^2)\,dy\nonumber\\
&\qquad \qquad \qquad \quad -2N\int_{0}^{t}\int_{0}^{\tau}\int_{B_{\rho}(x)}
\left(\log u-\log v\right)_+(y,s)\,dy\,ds\,d\tau\nonumber\\
&\qquad \qquad \qquad \quad +2\rho\int_{0}^{t}\int_{0}^{\tau}
\int_{\partial B_{\rho}(x)}\left(\log u-\log v\right)_+(y,s)
\,d\sigma_y\,ds\,d\tau\quad\forall 0<t<T.
\end{align}
Let $0<t_0<T$ and $\delta=T-t_0$. Now we divide the proof into 
two cases depending on whether
\begin{equation}\label{eq-split-case-into-two-1}
\int_{0}^{t_0}\int_{0}^{\tau}\int_{\R^N}\left(\log u-\log v\right)_+(y,s)
\,dydsd\tau<\infty
\end{equation}
or
\begin{equation}\label{eq-split-case-into-two-2}
\int_{0}^{t_0}\int_{0}^{\tau}\int_{\R^N}\left(\log u-\log v\right)_+(y,s)
\,dy\,ds\,d\tau=\infty.
\end{equation}

\noindent \textbf{Case 1}: \eqref{eq-split-case-into-two-1} holds. 

\noindent Then for any $0<\delta'<t_0$,
\begin{equation}
\begin{aligned}
\infty&>\int_{t_0-\delta'}^{t_0}\int_{0}^{\tau}\int_{\R^N}
\left(\log u-\log v\right)_+(y,s)\,dy\,ds\,d\tau\\
&\geq \delta'\int_{0}^{t}\int_{\R^N}\left(\log u-\log v\right)_+(y,s)\,dy\,ds 
\qquad \qquad \forall 0<t<t_0-\delta'.
\end{aligned}
\end{equation}
Hence
\begin{equation}\label{eq--split-case-into-two-1-result-1}
\lim_{\rho\to\infty}\frac{1}{\rho^N}I_1(\rho,t)=0 \qquad \qquad 
\forall 0<t<t_0-\delta'.
\end{equation}
Since $\delta'$ is arbitrary, \eqref{eq--split-case-into-two-1-result-1} 
holds for any $0<t<t_0$.

\noindent \textbf{Case 2}: \eqref{eq-split-case-into-two-2} holds. 

\noindent By the l'Hospital rule,
\begin{equation}\label{eq-result-lhospital-rule-123}
\begin{aligned}
&\lim_{\rho\to\infty}\frac{1}{\rho^N}\int_{0}^{t_0}\int_{0}^{\tau}\int_{B_{\rho}(x)}
\left(\log u-\log v\right)_+\,dy\,ds\,d\tau\\
&\qquad \qquad \qquad =\lim_{\rho\to\infty}\frac{1}{N\rho^{N-1}}\int_{0}^{t_0}
\int_{0}^{\tau}\int_{\partial B_{\rho}(x)}\left(\log u-\log v\right)_+
\,d\sigma_y\,ds\,d\tau.
\end{aligned}
\end{equation}
Let $r_1=|x|+k\delta^{-\frac{1}{N-2}}$. By 
\eqref{eq-aligned-some-inequality-of-log-1}, 
\eqref{eq-condition-result-of-B--1}, 
Lemma \ref{lem-preliminary-esti-first-1}, and the H\"older inequality,
for any $\rho>|x|+k\delta^{-\frac{1}{N-2}}$ and $0<t\leq T-\delta$,  we have
\begin{align}\label{eq-aligned-bounded-above-of-integral-quantity-467}
&\frac{1}{\rho^N}\int_{0}^{t}\int_{0}^{\tau}\int_{B_{\rho}(x)}
\left(\log u-\log v\right)_+\,dy\,ds\,d\tau\nonumber\\ 
=&\frac{1}{\rho^N}\int_{0}^{t}\int_{0}^{\tau}\int_{B_{\rho}(x)\cap B_{r_1}(0)}
\left(\log u-\log v\right)_+\,dy\,ds\,d\tau\nonumber\\
&\qquad +\frac{1}{\rho^N}\int_{0}^{t}\int_{0}^{\tau}
\int_{B_{\rho}(x)\setminus B_{r_1}(0)}\left(\log u-\log v\right)_+
\,dy\,ds\,d\tau\nonumber\\ 
\le&\frac{C_1}{\rho^N}\int_{0}^{t}\int_{0}^{\tau}\int_{B_{r_1}(0)}
\left(u-v\right)^{\frac{1}{2}}_+(y,s)\,dy\,ds\,d\tau\nonumber\\
&\qquad+\frac{C_1}{\rho^N}\int_{0}^{t}
\int_{0}^{\tau}\frac{\rho}{(T-s)^{\frac{1}{2}}}\left(\int_{B_{\rho}(x)}
(u-v)^{\frac{1}{2}}_+(y,s)\,dy\right)\,ds\,d\tau\nonumber\\
\le&\frac{C'}{\rho^{N}}\int_{0}^{t}\int_{0}^{\tau}\left(
\int_{B_{r_1}(0)}(u-v)_+(y,s)\,dy\right)^{\frac{1}{2}}\,ds\,d\tau
+C'T^{\frac{3}{2}}\left(
\frac{\|f\|^{\frac{1}{2}}_{L^1(\R^N)}}{\rho^{\frac{N}{2}-1}}+\sqrt{T}\right)
\end{align}
for some constant $C_1>0$, $C'>0$, depending on $\delta$ and $k$. By 
\eqref{eq-aligned-bounded-above-of-integral-quantity-467} 
the limit in \eqref{eq-result-lhospital-rule-123} is finite. 
Since $u_0-v_0\in L^1(\R^N)$,
\begin{equation}\label{eq-finiteness-of-inital-value-over-rho-to-n-1}
\lim_{\rho\to\infty}\frac{1}{\rho^{N-2}}\int_{B_{\rho}(x)}
\left(u_0-v_0\right)_+(y)\,dy=0.
\end{equation}
Dividing \eqref{eq-aligned-kato-applying-the-green-second-identity-12345678} 
by $\rho^N$ and letting $t=t_0$ and $\rho\to\infty$ as $i\to\infty$, by 
\eqref{eq-result-lhospital-rule-123} and 
\eqref{eq-finiteness-of-inital-value-over-rho-to-n-1},
\begin{equation}\label{eq-important-estimate-for-lemma-2-1-1}
\lim_{i\to\infty}\frac{1}{\rho^N}\int_{0}^{t_0}\int_{B_{\rho}(x)}
\left(u-v\right)_+(y,\tau)\left(\rho^2-|x-y|^2\right)\,dyd\tau=0.
\end{equation}
Let $0<\epsilon<1/2$. Since
\begin{equation*}
\rho^2\leq \frac{\rho^2-|x-y|^2}{1-(1-\epsilon)^2} 
\qquad\forall y\in B_{(1-\epsilon)\rho}(x),
\end{equation*}
by \eqref{eq-aligned-some-inequality-of-log-1}, 
\eqref{eq-condition-result-of-B--1}, 
Lemma \ref{lem-preliminary-esti-first-1}, and the 
H\"older inequality, for any $\rho>|x|$ we have
\begin{align}\label{eq-important-estimate-for-lemma-2-1-1-1-1}
&\int_{0}^{t_0}\int_{B_{(1-\epsilon)\rho}(x)}\left(\log u-\log v\right)_+
(y,s)\,dy\,ds\nonumber\\ 
=&\int_{0}^{t_0}\int_{B_{(1-\epsilon)\rho}(x)\cap B_{r_1}(0)}
\left(\log u-\log v\right)_+(y,s)\,dy\,ds\nonumber\\ 
&\qquad +\int_{0}^{t_0}\int_{B_{(1-\epsilon)\rho}(x)\setminus  B_{r_1}(0)}
(\log u-\log v)_+(y,s)\,dy\,ds\nonumber\\ 
\le&C_2\int_{0}^{t_0}\int_{B_{r_1}(0)}(u-v)^{\frac{1}{2}}_+(y,s)\,dy\,ds
+C_2\int_{0}^{t_0}\frac{1}{(T-s)^{\frac{1}{2}}}
\left(\int_{B_{(1-\epsilon)\rho}(x)}\rho\cdot\left(u-v\right)^{\frac{1}{2}}_+
\,dy\right)\,ds\nonumber\\
\le&C_3T+\frac{C_3\rho^{\frac{N}{2}}\sqrt{T}}{\delta^{\frac{1}{2}}
\left(1-(1-\epsilon)^2\right)^{\frac{1}{2}}}\left(\int_0^{t_0}
\int_{B_{\rho}(x)}\left(u-v\right)_+(y,s)(\rho^2-|x-y|^2)\,dy\,ds
\right)^{\frac{1}{2}}
\end{align}
for some constants $C_2>0$, $C_3>0$. Thus by 
\eqref{eq-important-estimate-for-lemma-2-1-1} and 
\eqref{eq-important-estimate-for-lemma-2-1-1-1-1},
\begin{equation}\label{eq-main-estimate-of-L-1-contraction-001}
\lim_{\rho\to\infty}\frac{1}{\rho^N}\int_{0}^{t_0}\int_{B_{(1-\epsilon)\rho}(x)}
\left(\log u-\log v\right)_+(y,s)\,dy\,ds=0.
\end{equation}
Now for any $y\in B_{\rho}(x)\setminus B_{(1-\epsilon)\rho}(x)$ and 
$\rho>2(|x|+r_1)$ we have
$$
\frac{3}{2}\rho\geq |x|+\rho\ge |y|\ge |x-y|-|x|\ge (1-\3)\rho-|x|\ge r_1.
$$
Hence by \eqref{eq-aligned-some-inequality-of-log-1}, 
\eqref{eq-condition-result-of-B--1}, 
Lemma \ref{lem-preliminary-esti-first-1}, and the 
H\"older inequality, for any $\rho>2(|x|+r_1)$,
\begin{equation*}
\begin{aligned}
&\int_{0}^{t_0}\int_{B_{\rho}(x)\setminus B_{(1-\epsilon)\rho}(x)}
\left(\log u-\log v\right)_+(y,s)\,dy\,ds\\ 
&\leq C\int_{0}^{t_0}\int_{B_{\rho}(x)\setminus B_{(1-\epsilon)\rho}(x)}v^{-\frac{1}{2}}
\left(u-v\right)^{\frac{1}{2}}_+\,dy\,ds\\
&\leq C\rho\int_{0}^{t_0}\frac{1}{(T-s)^{\frac{1}{2}}}
\int_{B_{\rho}(x)\setminus B_{(1-\epsilon)\rho}(x)}\left(u-v\right)^{\frac{1}{2}}_+\,dy\,ds\\
& \leq C'\left(1-(1-\epsilon)^{N}\right)^{\frac{1}{2}}\sqrt{T}
\rho^{\frac{N}{2}+1}\left(\|f\|_{L^1(\R^N)}+\rho^{N-2}T\right)^{\frac{1}{2}}
\end{aligned}
\end{equation*}
for some constants $C>0$, $C'>0$. Hence
\begin{equation}\label{eq-main-estimate-of-L-1-contraction-002}
\limsup_{\rho\to\infty}\frac{1}{\rho^N}\int_{0}^{t_0}
\int_{B_{\rho}(x)\setminus B_{(1-\epsilon)\rho}(x)}\left(\log u-\log v\right)_+(y,s)\,dyds
\leq C'\left(1-(1-\epsilon)^{N}\right)^{\frac{1}{2}}T.
\end{equation}
By \eqref{eq-main-estimate-of-L-1-contraction-001} and 
\eqref{eq-main-estimate-of-L-1-contraction-002},
\begin{equation}\label{eq-main-estimate-of-L-1-contraction-0021}
\limsup_{\rho\to\infty}\frac{1}{\rho^N}I_1(\rho,t_0) 
\leq C'\left(1-(1-\epsilon)^{N}\right)^{\frac{1}{2}}T.
\end{equation}
Since $0<\epsilon<1/2$ is arbitrary, letting $\epsilon\to 0$ in
\eqref{eq-main-estimate-of-L-1-contraction-0021} we get that
\begin{equation}\label{eq-final-two-of-I-1-0972}
\lim_{\rho\to\infty}\frac{1}{\rho^N}I_1(\rho,t)=0
\end{equation}
holds for any $0<t<t_0$. By Case 1 and  Case 2, 
\eqref{eq-final-two-of-I-1-0972} holds for any $0<t<t_0$. Since 
$0<t_0<T$ is arbitrary, \eqref{eq-final-two-of-I-1-0972} holds 
for any $0<t<T$. Similarly, 
\begin{equation*}
\lim_{\rho\to\infty}\frac{1}{\rho^N}I_2(\rho,t)=0\quad\forall
0<t<T
\end{equation*}
and \eqref{eq-lim-of-w-as-rho-to-infty} follows. Letting 
$\rho\to\infty$ in \eqref{eq-upperbound-newto-Z-1},
by \eqref{eq-lim-of-w-as-rho-to-infty},
\begin{equation}\label{eq-upperbound-w-123}
w(x,t)\leq Z(x) \qquad \forall x\in \R^N, 0<t<T.
\end{equation}
By \eqref{eq-upperbound-w-123}, we get that 
\eqref{eq-upperbound-newto-Z-A} holds for any $f\in L^1(\R^N)
\cap L^{\infty}(\R^N)$.

For general $f\in L^1(\R^N)$. Let $\vp\in C^{\infty}_0
(\R^N)$ be such that $0\leq \vp\leq 1$ and $\int_{\R^N}\vp=1$. Let
\begin{equation*} 
\vp_{\epsilon}(y)=\epsilon^{-N}\vp\left(\frac{y}{\epsilon}\right)
\end{equation*}
and
\begin{equation*}
g_{\epsilon}(x)=g\ast\vp_{\epsilon}(x)=\int_{\R^N}g(x-y)\vp_{\epsilon}(y)
\,dy
\end{equation*}
for any $0<\3<1$ and $g\in L^1(\R^N)$. Then by 
\eqref{eq-in-the-sense-of-distributions},
\begin{equation*}
\La\left(w_{\epsilon}-Z_{\epsilon}\right)\geq 0 \qquad \mbox{in $\R^N$}
\quad\forall 0<t<T.
\end{equation*}
Hence
\begin{equation*}
w_{\epsilon}(x,t)\leq Z_{\epsilon}(x)+\frac{N}{\omega_N\rho^N}
\int_{B_{\rho}(x)}w_{\epsilon}(y,t)\,dy \qquad \forall\rho>0, x\in\R^N, 
0<t<T.
\end{equation*}
Therefore
\begin{equation*}
\int_{R\leq|x|\leq 2R}w_{\epsilon}(x,t)\,dx
\leq \int_{R\leq|x|\leq 2R}Z_{\epsilon}(x)\,dx
+\int_{R\leq|x|\leq 2R}\left(\frac{N}{\omega_N\rho^N}
\int_{B_{\rho}(x)}w_{\epsilon}(y,t)\,dy\right)\,dx\quad\forall R>0.
\end{equation*}
Letting $\epsilon\to 0$,
\begin{align}\label{eq-aligned-letting-epsilon-to-zero-c}
\int_{R\leq|x|\leq 2R}w(x,t)\,dx&\le\int_{R\leq|x|\leq 2R}Z(x)\,dx
+\int_{R\leq|x|\leq 2R}\left(\frac{N}{\omega_N\rho^N}\int_{B_{\rho}(x)}
w(y,t)\,dy\right)\,dx\nonumber\\
&\leq \int_{R\leq|x|\leq 2R}Z(x)\,dx+\int_{R\leq|x|\leq 2R}
\left(\frac{N}{\omega_N\rho^N}\int_{B_{\rho+2R}(0)}w(y,t)\,dy\right)\,dx.
\end{align}
By \eqref{eq-lim-of-w-as-rho-to-infty},
\begin{equation*}
\lim_{\rho\to\infty}\frac{1}{\rho^N}\int_{B_{\rho+2R}(0)}w(y,t)\,dy=0.
\end{equation*}
Hence by letting $\rho\to\infty$ in 
\eqref{eq-aligned-letting-epsilon-to-zero-c}, 
\eqref{eq-upperbound-newto-Z-A} follows. Let $\eta_R$ be as in the proof of 
Lemma~\ref{lem-preliminary-esti-first-1}. By \eqref{eq-E-by-H-1924},
\begin{equation}\label{eq-estimate-J-R-004}
\begin{aligned}
\int_{\R^N}|u-v|(\cdot,t)\eta_R\,dx 
&\leq \int_{\R^N}|f|\,dx+\int_{0}^{t}\int_{R\le |x|\le 2R}
|\log u-\log v||\La \eta_R|\,dxds\\
&\leq \|f\|_{L^1(\R^N)}+\frac{C}{R^2}\int_{R\le |x|\le 2R}w(x)\,dx.
\end{aligned}
\end{equation}
By \eqref{eq-upperbound-newto-Z-A},
\begin{equation}\label{eq-aligned-last-estimate-of-lemma-2-2}
\begin{aligned}
\int_{R\leq|x|\leq 2R}w(x,t)\,dx &\leq \frac{1}{N(N-2)\omega_N}
\int_{R\leq|x|\leq 2R}\left(\int_{\R^N}\frac{|f(y)|}{|x-y|^{N-2}}\,dy\right)\,dx\\
&\leq C\int_{\R^N}|f(y)|
\left(\int_{R\leq|x|\leq 2R}\frac{dx}{|x-y|^{N-2}}\right)\,dy\\
&\leq C\int_{\R^N}|f(y)|J_R(y)\,dy.
\end{aligned}
\end{equation}
where $J_R(y)=\int_{R\leq|x|\leq 2R}\frac{dx}{|x-y|^{N-2}}$. Let 
$R\leq |x|\leq 2R$. Then for $|y|\leq \frac{R}{2}$ we have
$|x-y|\geq |x|/2$. Hence
\begin{equation}\label{eq-estimate-J-R-001}
J_R(y)\leq \int_{R\leq|x|\leq 2R}
\frac{dx}{\left(\frac{|x|}{2}\right)^{N-2}}\leq CR^2\qquad 
\forall |y|\leq \frac{R}{2}.
\end{equation}
For $|y|\geq 4R$, we have $|x-y|\geq |y|/2\geq 2R$. Thus
\begin{equation}\label{eq-estimate-J-R-002}
J_R(y)\leq \int_{R\leq|x|\leq 2R}\frac{dx}{\left(2R\right)^{N-2}}
\leq CR^2\qquad \forall |y|\leq 4R.
\end{equation}
Finally for $\frac{R}{2}<|y|<4R$, we have $|x-y|<6R$. Therefore
\begin{equation}\label{eq-estimate-J-R-003}
J_R(y)\leq \int_{|x-y|<6R}\frac{dx}{\left|x-y\right|^{N-2}}
\leq CR^2\qquad \forall \frac{R}{2}<|y|<4R.
\end{equation}
By \eqref{eq-aligned-last-estimate-of-lemma-2-2}, 
\eqref{eq-estimate-J-R-001}, \eqref{eq-estimate-J-R-002} 
and \eqref{eq-estimate-J-R-003},
\begin{equation}\label{eq-estimate-J-R-005}
\int_{R\leq|x|\leq 2R}w(x,t)\,dx\leq C'R^2\|f\|_{L^1}\quad\forall
0<t<T
\end{equation}
for some constant $C'>0$. By \eqref{eq-estimate-J-R-004} and 
\eqref{eq-estimate-J-R-005},
\begin{equation*}
\int_{\R^N}|u-v|(x,t)\eta_{R}(x)\,dx \leq  C\|f\|_{L^1(\R^N)} 
\qquad \forall R>0, 0<t<T
\end{equation*} 
for some constant $C>0$. Letting $R\to\infty$, we get
\begin{equation*}
\int_{\R^N}|u-v|(x,t)\,dx\le C\|f\|_{L^1(\R^N)}\quad\forall 0<t<T
\end{equation*}
and  the lemma follows.
\end{proof}

By an argument similar to the proof of Corollary 2.2 
of \cite{DS1} but with Lemma \ref{lem-before-L-1-contraction-46} 
replacing Lemma 2.1 of \cite{DS1} in the proof, we have the 
following $L^1$-contraction principle for the solutions of 
\eqref{eq-cases-main-cauchy-problem} that are bounded below by 
some Barenblatt solution $B$.

\begin{lem}\label{eq-L-1-contraction-principle}
Let $u$, $v$ be two solutions of \eqref{eq-cases-main-cauchy-problem} 
with initial values $u_0$, $v_0$ respectively and $f=u_0-v_0\in 
L^1(\R^N)$. Assume in addition that $u$, $v\geq B$, for some 
Barenblatt solution $B=B_k$ given by 
\eqref{eq-self-similar-barenblett-solution}. Then
\begin{equation*}
\int_{\R^N}|u(\cdot,t)-v(\cdot,t)|\,dx \leq \int_{\R^N}|u_0-v_0|\,dx, 
\qquad \forall t\in[0,T).
\end{equation*}
\end{lem}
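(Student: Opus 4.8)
The plan is to mimic the proof of Corollary~2.2 of \cite{DS1}, using the potential-function machinery just developed in Lemma~\ref{lem-before-L-1-contraction-46}. First I would recall the potential
\begin{equation*}
w(x,t)=\int_0^t|(\log u-\log v)(x,s)|\,ds,
\end{equation*}
which by Lemma~\ref{lem-before-L-1-contraction-46} and its proof satisfies $\La w\ge -|f|$ in the sense of distributions on $\R^N$ for each $0<t<T$, together with the pointwise bound $w(x,t)\le Z(x)$, where $Z$ is the Newtonian potential of $|f|$. Integrating \eqref{eq-E-by-H-1924} in time against the cutoff $\eta_R$ from Lemma~\ref{lem-preliminary-esti-first-1}, I get, exactly as in \eqref{eq-estimate-J-R-004},
\begin{equation*}
\int_{\R^N}|u-v|(x,t)\eta_R(x)\,dx
\le\|f\|_{L^1(\R^N)}+\frac{C}{R^2}\int_{R\le|x|\le 2R}w(x,t)\,dx.
\end{equation*}
Since Lemma~\ref{lem-before-L-1-contraction-46} already gives $u(\cdot,t)-v(\cdot,t)\in L^1(\R^N)$, the left side converges to $\int_{\R^N}|u-v|(x,t)\,dx$ as $R\to\infty$.

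The crux is therefore to show that the error term $R^{-2}\int_{R\le|x|\le 2R}w(x,t)\,dx$ does \emph{not merely stay bounded} but in fact tends to zero, and that one can pass to the exact constant $\|f\|_{L^1(\R^N)}$ rather than a generic multiple of it. To do this I would split $|f|=f_1+f_2$ with $f_1\in L^1\cap L^\infty$ compactly supported and $\|f_2\|_{L^1(\R^N)}$ as small as we like; the Newtonian potential $Z_1$ of $f_1$ decays like $|x|^{-(N-2)}$, so $R^{-2}\int_{R\le|x|\le2R}Z_1\to0$, while the contribution of $f_2$ to $\int_{R\le|x|\le2R}w$ is controlled, via the estimate \eqref{eq-aligned-last-estimate-of-lemma-2-2}--\eqref{eq-estimate-J-R-005} applied to $f_2$, by $C'R^2\|f_2\|_{L^1(\R^N)}$. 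Hence
\begin{equation*}
\limsup_{R\to\infty}\frac{1}{R^2}\int_{R\le|x|\le 2R}w(x,t)\,dx\le C'\|f_2\|_{L^1(\R^N)},
\end{equation*}
and letting $\|f_2\|_{L^1(\R^N)}\to0$ shows this $\limsup$ is $0$. Combining with the displayed inequality for $\int|u-v|\eta_R$ yields
\begin{equation*}
\int_{\R^N}|u-v|(x,t)\,dx\le\|f\|_{L^1(\R^N)}=\int_{\R^N}|u_0-v_0|\,dx
\end{equation*}
for every $0<t<T$, and the case $t=0$ is the hypothesis itself, so the lemma follows.

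I expect the only real obstacle to be the bookkeeping in this truncation argument — making sure the pointwise sub-mean-value estimate \eqref{eq-upperbound-newto-Z-1} and the annulus bound \eqref{eq-estimate-J-R-005} genuinely apply to the pieces $f_1,f_2$ and that the limits are taken in the right order (first $R\to\infty$, then the size of $f_2$ to zero). Everything else is a direct transcription of the computations already carried out in the proof of Lemma~\ref{lem-before-L-1-contraction-46}, with $\La w\ge-|f|$ and $w\le Z$ supplying exactly the input that Lemma~2.1 of \cite{DS1} supplied in the original argument.
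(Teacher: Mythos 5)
There is a genuine gap at the crux of your argument. You claim that for the compactly supported piece $f_1$ one has $R^{-2}\int_{R\le|x|\le 2R}Z_1\,dx\to 0$ because $Z_1$ decays like $|x|^{-(N-2)}$. This is false: for a compactly supported mass the Newtonian potential satisfies $Z_1(x)\sim c\,\|f_1\|_{L^1}|x|^{2-N}$ as $|x|\to\infty$, and
\begin{equation*}
\int_{R\le|x|\le 2R}|x|^{2-N}\,dx=\omega_N\int_R^{2R}r\,dr=\tfrac{3}{2}\omega_N R^2,
\end{equation*}
so $R^{-2}\int_{R\le|x|\le 2R}Z_1\,dx$ tends to a \emph{positive constant} proportional to $\|f_1\|_{L^1}$, not to zero. (This is consistent with \eqref{eq-estimate-J-R-005}, whose $R^2$ growth is sharp.) Consequently the decomposition $|f|=f_1+f_2$ buys nothing: your limsup is bounded below by $c\|f_1\|_{L^1}$ and above only by $C'(\|f_1\|_{L^1}+\|f_2\|_{L^1})\approx C'\|f\|_{L^1}$, and the resulting estimate is $\int_{\R^N}|u-v|(\cdot,t)\,dx\le(1+C')\|f\|_{L^1}$ --- exactly the non-sharp bound already contained in Lemma \ref{lem-before-L-1-contraction-46}, not the contraction with constant $1$.

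The intended argument (the one the paper imports from Corollary 2.2 of \cite{DS1}) uses the \emph{conclusion} of Lemma \ref{lem-before-L-1-contraction-46} to kill the error term, rather than returning to the potential $Z$. Fix $0<\delta<T$ and $0<t\le T-\delta$. In \eqref{eq-estimate-J-R-004}, instead of bounding $\int_0^t\int_{R\le|x|\le 2R}|\log u-\log v|\,|\La\eta_R|\,dx\,ds$ via $w$ and $Z$, use the pointwise bound $|\log u-\log v|\le B_k^{-1}|u-v|$ (valid since $u,v\ge B_k$) together with \eqref{eq-condition-result-of-B--1}: on the annulus $R\le|x|\le 2R$ with $R$ large one has $B_k^{-1}|\La\eta_R|\le C(T-s)^{-1}\le C\delta^{-1}$, so the error term is at most $C\delta^{-1}\int_0^t\int_{R\le|x|\le 2R}|u-v|(y,s)\,dy\,ds$. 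Since Lemma \ref{lem-before-L-1-contraction-46} gives $\int_{\R^N}|u-v|(\cdot,s)\,dx\le C\|f\|_{L^1}$ uniformly in $s$, each inner integral tends to $0$ as $R\to\infty$ and is dominated by $C\|f\|_{L^1}$; dominated convergence then sends the whole error term to $0$. Letting $R\to\infty$ yields $\int_{\R^N}|u-v|(\cdot,t)\,dx\le\|f\|_{L^1}$ for $t\le T-\delta$, and $\delta$ is arbitrary. This is precisely the step at which Lemma \ref{lem-before-L-1-contraction-46} replaces Lemma 2.1 of \cite{DS1}; your proposal never actually uses the integrability of $u-v$ in the error estimate, which is why it cannot reach the sharp constant.
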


As a consequence of Lemma \ref{eq-L-1-contraction-principle} 
we have the following result concerning the rescaling solutions 
$\4{u}$ and $\4{v}$ of solutions $u$ and $v$ of 
\eqref{eq-cases-main-cauchy-problem} .

\begin{cor}\label{cor-tilde-u-L-1-contraction-987}
Let $u$, $v$, $u_0$, $v_0$, be as in Lemma \ref{eq-L-1-contraction-principle}. 
If $u_0-v_0\in L^1(\R^N)$, then 
\begin{equation*}
\int_{\R^N}\left|\4{u}(x,s)-\4{v}(x,s)\right|\,dx
\leq \int_{\R^N}|u_0-v_0|\,dx, \qquad \forall s>-\log T.
\end{equation*}
\end{cor}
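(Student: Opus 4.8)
The plan is to reduce the statement to Lemma~\ref{eq-L-1-contraction-principle} by an elementary change of variables, exploiting the fact that the rescaling \eqref{eq-rescaled-function} acts, at each fixed time, as a dilation of $\R^N$ whose Jacobian is exactly compensated by the prefactor $(T-t)^{-N/(N-2)}$.

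Concretely, fix $s>-\log T$ and set $t=T-e^{-s}$, so that $t\in(0,T)$ and $(T-t)=e^{-s}$. In the integral $\int_{\R^N}|\4{u}(x,s)-\4{v}(x,s)|\,dx$ I would substitute $y=(T-t)^{-\frac{1}{N-2}}x$, so that $x=(T-t)^{\frac{1}{N-2}}y$ and $dx=(T-t)^{\frac{N}{N-2}}\,dy$. By the definition \eqref{eq-rescaled-function} of the rescaled functions,
\begin{equation*}
\4{u}(x,s)-\4{v}(x,s)=(T-t)^{-\frac{N}{N-2}}\bigl(u(y,t)-v(y,t)\bigr),
\end{equation*}
so the factors $(T-t)^{\pm\frac{N}{N-2}}$ cancel and
\begin{equation*}
\int_{\R^N}\bigl|\4{u}(x,s)-\4{v}(x,s)\bigr|\,dx=\int_{\R^N}\bigl|u(y,t)-v(y,t)\bigr|\,dy .
\end{equation*}

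Since $u$, $v$, $u_0$, $v_0$ are taken exactly as in Lemma~\ref{eq-L-1-contraction-principle} — in particular $u,v\ge B$ for some Barenblatt solution $B=B_k$ and $u_0-v_0\in L^1(\R^N)$ — that lemma applies and gives $\int_{\R^N}|u(\cdot,t)-v(\cdot,t)|\,dy\le\int_{\R^N}|u_0-v_0|\,dy$ for every $t\in[0,T)$, hence in particular for our $t$. Combining the two displays yields the claimed inequality for every $s>-\log T$. There is no genuine obstacle here: the only points to check are the bookkeeping of the scaling exponents (the $N/(N-2)$ power is precisely the Jacobian of the dilation) and the observation that the hypotheses of Lemma~\ref{eq-L-1-contraction-principle} are inherited verbatim, so the corollary follows immediately.
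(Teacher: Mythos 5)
Your proof is correct and is essentially the argument the paper intends: the corollary is stated as an immediate consequence of Lemma~\ref{eq-L-1-contraction-principle}, and the change of variables $y=(T-t)^{-1/(N-2)}x$ (whose Jacobian $(T-t)^{N/(N-2)}$ cancels the rescaling prefactor, so that $\|\4{u}(\cdot,s)-\4{v}(\cdot,s)\|_{L^1}=\|u(\cdot,t)-v(\cdot,t)\|_{L^1}$ with $t=T-e^{-s}\in(0,T)$) is exactly the intended reduction.
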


\section{The integrable case $(N=3)$}\label{section-The-integrable-case-(N=3)}
\setcounter{equation}{0}
\setcounter{thm}{0}

This section will be devoted to the proof of 
Theorem~\ref{lem-main-theorem-with-finite-integral-MS}. Note that when
$N=3$, the difference of two solutions $u$, $v$, satisfying 
\eqref{eq-initial-u-0-trapped-by-B} is integrable. 
We will use a modification of the technique of \cite{Hs1} to prove 
Theorem \ref{lem-main-theorem-with-finite-integral-MS}. We begin 
this section with the following technical lemma, 
which constitutes the main step in the proof of 
Theorem \ref{lem-main-theorem-with-finite-integral-MS}. 

\begin{lem}\label{lem-L-1-integrabilities-of-p-456}
Let $N\ge 3$, $s_0>0$, $0\leq f\in L^1(\R^N)\cap L^{\infty}(\R^N)$ and 
$0\leq g, \hat{g}\in C(\R^N\times(0,s_0])\cap L^{1}(\R^N\times[0,s_0])$ 
such that $0\leq g\leq \hat{g}$ on $\R^N\times(0,s_0)$. Let $\4{a}(x,s)
\in C^{\infty}(\R^N\times(0,s_0])$ satisfy
\begin{equation}\label{eq-condition-of-Lemma-4-first-437}
C_1(1+|x|^2)\le\4{a}(x,s) \le C_2(1+|x|^2)\qquad 
\forall x\in\R^N,0\le s\le s_0, 
\end{equation}
for some constants $C_1>0$, $C_2>0$. For any $R>1$, let $p_R(x,s)$ 
be a solution of 
\begin{equation}\label{eq-case-aligned-problem-for-q-in-B-r}
\begin{cases}
\begin{aligned}
p_{s}(x,s)=&\La\left(\4{a}(x,s)p(x,s)\right)+\frac{1}{N-2}
\mbox{div}\left(x\cdot p(x,s)\right) \qquad 
\mbox{in }B_R(0)\times (0,s_0)\\
p(x,s)=&g(x,s)\qquad \qquad \qquad \qquad\qquad \qquad \qquad
\qquad\quad \mbox{on }\partial B_R(0)\times(0,s_0)\\
p(x,0)=&f(x) \qquad \qquad \qquad\qquad\qquad\qquad \qquad \qquad 
\quad\,\, \mbox{ in }B_R(0)
\end{aligned}
\end{cases}
\end{equation}
Then there exists a sequence of positive numbers 
$\{R_i\}_{i=1}^{\infty}$, $R_i\to\infty$ as $i\to\infty$, 
depending on $\hat{g}$ and independent of $g$ such that $p_{R_i}$ 
converges uniformly on every compact subsets of $\R^N\times(0,s_0]$ as 
$i\to\infty$ to a solution $p$ of 
\begin{equation}\label{eq-difference-between-tilde-u-and-tilde-v}
q_s=\La\left(\4{a}(x,s)q\right)+\frac{1}{N-2}\mbox{div}(x\cdot q)
\end{equation}
in $\R^N\times(0,s_0]$ which satisfies
\begin{equation}\label{eq-moreover-properties-of-q}
\int_{\R^N}p(x,s)\,dx\leq \int_{\R^N}f\,dx \qquad \forall 0<s\leq s_0.
\end{equation}
\end{lem}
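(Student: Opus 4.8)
The plan is to obtain $p$ as a subsequential limit of the $p_{R_i}$, with all the difficulty concentrated in a \emph{sharp}, $R$-uniform, local $L^1$ bound for $p_R$. First I would note that for each fixed $R>1$ the linear problem \eqref{eq-case-aligned-problem-for-q-in-B-r} is uniformly parabolic on $\overline{B_R(0)}$, since $C_1\le\4{a}\le C_2(1+R^2)$ there, with smooth coefficient $\4{a}$; hence by standard linear parabolic theory it has a solution $p_R\in C^\infty(B_R(0)\times(0,s_0])$ attaining the data $f$ on $B_R(0)$ and $g$ on $\partial B_R(0)$. Rewriting the equation in the non-divergence form
\[
p_s=\4{a}\,\Delta p+\Big(2\D\4{a}+\tfrac1{N-2}x\Big)\cdot\D p+\Big(\Delta\4{a}+\tfrac N{N-2}\Big)p
\]
and applying the minimum principle to $e^{-\lambda s}p_R$ with $\lambda=\lambda(R)$ large enough that the zeroth order coefficient becomes non-positive, together with $f\ge0$ and $g\ge0$, gives $p_R\ge0$ in $B_R(0)\times(0,s_0)$.

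The main step is the a priori estimate. Fix an even integer $\alpha\ge2$ with $(N-2)(\alpha+N-2)C_1\ge1$ and set $\phi_R(x)=R^\alpha-|x|^\alpha$, so that $\phi_R\ge0$ on $B_R(0)$, $\phi_R=0$ on $\partial B_R(0)$, $\Delta\phi_R=-\alpha(\alpha+N-2)|x|^{\alpha-2}$ and $x\cdot\D\phi_R=-\alpha|x|^\alpha$. Multiplying the equation in \eqref{eq-case-aligned-problem-for-q-in-B-r} by $\phi_R$, integrating over $B_R(0)$, and integrating by parts twice (after a routine approximation; boundary terms come only from the Laplacian because $\phi_R$ vanishes on $\partial B_R(0)$), one obtains
\begin{align*}
\frac{d}{ds}\int_{B_R(0)}p_R\phi_R\,dx
=&-\alpha(\alpha+N-2)\int_{B_R(0)}\4{a}\,p_R|x|^{\alpha-2}\,dx
+\frac{\alpha}{N-2}\int_{B_R(0)}p_R|x|^{\alpha}\,dx\\
&+\alpha R^{\alpha-1}\int_{\partial B_R(0)}\4{a}\,g\,d\sigma .
\end{align*}
The choice of $\alpha$ together with $\4{a}\ge C_1(1+|x|^2)$ makes the sum of the first two integrals $\le0$, while $\4{a}\le C_2(1+R^2)$ on $\partial B_R(0)$ and $0\le g\le\hat g$ bound the last one by $CR^{\alpha+1}\int_{\partial B_R(0)}\hat g\,d\sigma$. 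Integrating in time, using $\int_{B_R(0)}f\phi_R\,dx\le R^\alpha\int_{\R^N}f\,dx$, and dividing by $\inf_{B_\rho(0)}\phi_R=R^\alpha-\rho^\alpha$ (for $R>\rho$), I arrive at
\[
\int_{B_\rho(0)}p_R(x,s)\,dx\le\frac{R^\alpha}{R^\alpha-\rho^\alpha}\int_{\R^N}f\,dx
+\frac{CR^{\alpha+1}}{R^\alpha-\rho^\alpha}\int_0^{s_0}\int_{\partial B_R(0)}\hat g\,d\sigma\,d\tau\qquad(0<s\le s_0).
\]

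To finish, I would choose the radii. Since
$\int_0^\infty\Big(\int_0^{s_0}\int_{\partial B_R(0)}\hat g\,d\sigma\,d\tau\Big)\,dR=\int_0^{s_0}\int_{\R^N}\hat g\,dx\,d\tau<\infty$,
there is a sequence $R_i\to\infty$ along which $R_i\int_0^{s_0}\int_{\partial B_{R_i}(0)}\hat g\,d\sigma\,d\tau\to0$ (otherwise $\int_0^{s_0}\int_{\partial B_R(0)}\hat g\,d\sigma\,d\tau\ge c/R$ for all large $R$, contradicting finiteness of the integral in $R$), and this sequence depends only on $\hat g$. Along $\{R_i\}$ the last display gives a local $L^1$ bound on $p_{R_i}$ uniform in $i$, converging to $\int_{\R^N}f\,dx$ as $i\to\infty$; since $p_{R_i}\ge0$ solves a uniformly parabolic equation with smooth coefficients on every cylinder $B_\rho(0)\times[s',s_0]$, standard parabolic regularity (a local $L^1\to L^\infty$ bound followed by interior Schauder estimates) yields uniform $C^{2+\gamma,1+\gamma/2}$ bounds on compact subsets of $\R^N\times(0,s_0]$. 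By Arzel\`a--Ascoli and a diagonal argument, a subsequence of $\{p_{R_i}\}$, which I relabel, converges in $C^2_{loc}(\R^N\times(0,s_0])$ to a solution $p\ge0$ of \eqref{eq-difference-between-tilde-u-and-tilde-v} in $\R^N\times(0,s_0]$. Passing to the limit in the last display gives $\int_{B_\rho(0)}p(x,s)\,dx\le\int_{\R^N}f\,dx$ for every $\rho>0$ and $0<s\le s_0$, and letting $\rho\to\infty$ yields \eqref{eq-moreover-properties-of-q}.

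The hard part is the displayed a priori estimate: the drift $\frac1{N-2}\mbox{div}(x\cdot p)$ produces the \emph{positive} term $\frac{\alpha}{N-2}\int p_R|x|^\alpha$, which can be absorbed only because $\4{a}$ grows quadratically and the exponent $\alpha$ may be taken large; and obtaining the constant $1$ (rather than some $C>1$) in front of $\int_{\R^N}f$ forces both the particular weight $R^\alpha-|x|^\alpha$ and the choice of radii for which the boundary inflow disappears in the limit. The compactness argument, the identification of the limit equation, and the passage to the limit are then routine.
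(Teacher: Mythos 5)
Your proposal is correct, and it reaches the key mass estimate \eqref{eq-moreover-properties-of-q} by a genuinely different route than the paper. The paper argues by duality: it fixes $s_1\in(0,s_0]$, solves the backward adjoint problem $L[\psi]=\psi_s+\4{a}\La\psi-\frac{1}{N-2}x\cdot\D\psi=0$ in $B_R(0)\times(0,s_1)$ with terminal data $h$ supported in $B_{R/2}(0)$ and zero lateral data, shows $0\le\psi_R\le 1$, and then uses the explicit barrier $H_k(x)=\frac{2^{2k}}{2^{2k}-1}\bigl(1-|x|^{2k}/R^{2k}\bigr)$ (with $k$ large depending on $C_1$ and $N$) on the annulus $B_R\setminus B_{R/2}$ to get $|\partial\psi_R/\partial\nu|\le C/R$ on $\partial B_R(0)$; pairing $p_R$ with $\psi_R$ then leaves only the boundary flux term $-\int_{|x|=R}\4{a}\,g\,\partial_\nu\psi_R\,d\sigma_R\le CR\int_{|x|=R}\hat g\,d\sigma_R$. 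You instead test directly against the explicit weight $\phi_R(x)=R^\alpha-|x|^\alpha$, absorb the positive drift contribution $\frac{\alpha}{N-2}\int p_R|x|^\alpha$ into the negative term $\int\4{a}\,p_R\,\La\phi_R$ by taking $\alpha$ large relative to $1/C_1$, and recover the sharp constant $1$ by dividing by $\inf_{B_\rho}\phi_R=R^\alpha-\rho^\alpha$. This is essentially the paper's barrier computation transplanted from the adjoint solution onto the test function itself; it is more elementary (no backward problem, no Hopf-type normal-derivative comparison) at the cost of a slightly less clean bookkeeping of where the constant $1$ comes from. Both arguments share the same two remaining ingredients: the selection of radii $R_i$ with $R_i\int_0^{s_0}\int_{|x|=R_i}\hat g\,d\sigma_{R_i}\,ds\to 0$ (the paper minimizes over $[i/2,i]$, you argue by contradiction from $\int_0^\infty\bigl(\int_0^{s_0}\int_{\partial B_R}\hat g\,d\sigma\,ds\bigr)dR<\infty$ --- equivalent), and interior compactness via Schauder plus Arzel\`a--Ascoli. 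On the latter point you are in fact more careful than the paper, which invokes Schauder estimates without first recording a uniform local sup bound; your intermediate local $L^1\to L^\infty$ step supplies exactly that.
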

\begin{proof}
Since $\hat{g}\in L^1(\R^N\times[0,s_0])$,
\begin{equation}\label{eq-convergence-of-h-using-L-1-integrability-1}
\int_{\frac{i}{2}}^{i}\int_0^{s_0}\int_{|x|=R}\hat{g}(y,s)\,d\sigma_R\,ds\,dR 
\to 0 \qquad \mbox{as $i\to\infty$}
\end{equation}
where $d\sigma_R$ is the surface measure on $\partial B_R(0)$. For each 
$i\in\N$, there exists $R_i\in[i/2,i]$ such that
\begin{equation}
\int_0^{s_0}\int_{|x|=R_i}\hat{g}(y,s)\,d\sigma_{R_i}ds
=\min_{\frac{i}{2}\leq R\leq i}\left\{\int_{0}^{s_0}
\int_{|x|=R}\hat{g}(y,s)\,d\sigma_Rds\right\}.
\end{equation}
Then by \eqref{eq-convergence-of-h-using-L-1-integrability-1},
\begin{align}
&\frac{i}{2}\int_0^{s_0}\int_{|x|=R_i}\hat{g}(y,s)\,d\sigma_{R_i}ds 
\to 0 \qquad \mbox{as }i\to\infty\notag\\
\Rightarrow\quad&R_i\int_0^{s_0}\int_{|x|=R_i}\hat{g}(y,s)\,d\sigma_{R_i}ds\to 0 
\qquad\mbox{as }i\to\infty.
\label{eq-convergence-of-h-using-L-1-integrability-2}
\end{align}
By choosing a subsequence if necessary we may assume without loss of 
generality that $R_{i+1}>R_i$ for any $i\in\N$.
By \eqref{eq-condition-of-Lemma-4-first-437} and the Schauder 
estimates for parabolic equations \cite{LSU}, the sequence 
$\{p_{R_i}\}_{i=1}^{\infty}$ is equi-H\"older continuous in $C^{2,1}$ 
on every compact subsets of $\R^N\times(0,s_0]$. Hence by the Ascoli 
Theorem and a diagonalization argument there exists a subsequence, 
which we will still denote by $\{p_{R_i}\}_{i=1}^{\infty}$, that converges 
uniformly on every compact subsets of $\R^N\times(0,s_0]$ to a 
solution $p$ of \eqref{eq-difference-between-tilde-u-and-tilde-v}
in $\R^N\times(0,s_0]$ as $i\to\infty$.

It remains to prove \eqref{eq-moreover-properties-of-q}. We
fix $s_1\in(0,s_0]$ and define the operator $L$ by
\begin{equation*}
L[\psi]=\psi_s+\4{a}\La\psi-\frac{1}{N-2}x\cdot\nabla\psi.
\end{equation*}
For any $R>1$ and $h\in C_0^{\infty}(B_R(0))$, $0\leq h\leq 1$ on $B_R(0)$, 
such that
\begin{equation}\label{eq-terminal-values-in-first-Lemma-of-4th-section-5}
h(x)=0 \quad \mbox{on $B_R(0)\bs B_{\frac{R}{2}}(0)$},
\end{equation}
let $\psi_R(x,s)$ be the solution of 
\begin{equation}\label{psi-eqn}
\begin{cases}
\begin{aligned}
L[\psi]&=0 \qquad \mbox{ in $B_R(0)\times(0,s_1)$}\\
\psi(x,s)&=0 \qquad \mbox{ on $\partial B_R(0)\times(0,s_1)$}\\
\psi(x,s_1)&=h(x) \quad\mbox{in }B_R(0).
\end{aligned}
\end{cases}
\end{equation} 
By the maximum principle $0\leq\psi_R\le 1$ in $B_R(0)\times(0,s_1)$.
Let 
$$
H_k(x)=\frac{2^{2k}}{2^{2k}-1}\left(1-\frac{|x|^{2k}}{R^{2k}}\right)
$$ 
for some $k>0$ to be determined later. By direct computation for any 
$k\in\N$
\begin{equation}\label{eq-operator-values-in-first-Lemma-of-4th-section-5}
\begin{aligned}
L\left[H_k(x)\right]\leq -\left(\frac{2^{2k}}{2^{2k}-1}\right)
\frac{2k|x|^{2k-2}}{R^{2k}}\left[C_1(N+2k-2)
+\left(C_1(N+2k-2)-\frac{1}{N-2}\right)|x|^2\right]
\end{aligned}
\end{equation}
on $B_R(0)\bs B_{R/2}(0)$ and 
\begin{equation}\label{eq-boundary-condition-of-H-39}
\begin{cases}
\begin{aligned}
H_k(x)\equiv &0 \qquad \qquad \qquad \quad\forall |x|=R\\
H_k(R/2)=&1\geq \psi_R(x,s)\qquad\forall |x|=R/2, 0<s<s_1.
\end{aligned}
\end{cases}
\end{equation}
We now choose $k>\frac{1}{2C_1(N-2)}+1$. Then by 
\eqref{eq-operator-values-in-first-Lemma-of-4th-section-5},
\begin{equation}\label{Hk-eqn}
L\left[H_{k}(x)\right]<0\quad\mbox{ in }B_R(0)\bs B_{\frac{R}{2}}(0). 
\end{equation}
Hence $H_k(x)$ is a super-solution of $L(\xi)=0$ in 
$(B_R(0)\bs B_{R/2}(0))\times(0,s_1)$. 
By \eqref{eq-terminal-values-in-first-Lemma-of-4th-section-5}, 
\eqref{psi-eqn}, \eqref{eq-boundary-condition-of-H-39}, \eqref{Hk-eqn}, 
and the maximum principle in $(B_R(0)\bs B_{R/2}(0))\times(0,s_1)$,
\begin{equation}\label{eq-boundary-condition-of-H-394}
\psi_{R}(x,s) \leq H_{k}(x)=\frac{2^{2k}}{2^{2k}-1}
\left(1-\frac{|x|^{2k}}{R^{2k}}\right)\quad \mbox{ on }
(B_R(0)\bs B_{R/2}(0))\times(0,s_1).
\end{equation}
Then by \eqref{eq-boundary-condition-of-H-39} and 
\eqref{eq-boundary-condition-of-H-394},
\begin{equation*}
\left|\frac{\partial \psi_{R}}{\partial \nu}\right|\leq 
\left|\frac{\partial}{\partial \nu}\left(\frac{2^{2k}}{2^{2k}-1}
\left(1-\frac{|x|^{2k}}{R^{2k}}\right)\right)\right|
\leq \frac{C}{R} \qquad \mbox{on $\partial B_R(0)\times(0,s_1)$}
\end{equation*}
for some constant $C>0$ depending on $k$ where 
$\frac{\partial}{\partial\nu}$ is the derivative with respect 
to the unit outer normal $\nu$ on the boundary $\partial B_R(0)$.

Multiplying \eqref{eq-case-aligned-problem-for-q-in-B-r}
by $\psi_R$ and integrating over $B_R(0)$, by integration by parts,
\eqref{eq-condition-of-Lemma-4-first-437}, and \eqref{psi-eqn}, we get 
\begin{equation}\label{eq-alinged-time-derivatives-of-q-098}
\begin{aligned}
\frac{\partial}{\partial s}\left[\int_{|x|\le R}p_R\,\psi_R\,dx\right]&
=\int_{|x|\le R}\left[\psi_{R,s}+\4{a}\La\psi_R-\frac{1}{N-2}x
\cdot\nabla\psi_R\right]p_R\,dx-\int_{|x|=R}\4{a}\,g\,
\frac{\partial\psi_{R}}{\partial \nu}\,d\sigma_R\\
&=-\int_{|x|=R}\4{a}\,g\,
\frac{\partial\psi_{R}}{\partial \nu}\,d\sigma_R\\
&\leq CR\int_{|x|=R}\hat{g}\,d\sigma_R\qquad\qquad\quad
\forall 0<s<s_1, \quad R>1.
\end{aligned}
\end{equation}
Hence
\begin{equation}\label{eq-I-by-H-55}
\int_{|x|\le R}p_R(x,s_1)h(x)\,dx\leq \int_{|x|\le R}f(x)\psi_R(x,0)\,dx
+CR\int_{0}^{s_0}\int_{|x|=R}\hat{g}\,d\sigma_Rds.
\end{equation}
We now choose $h(x)=\eta_{R/4}(x)$ where $\eta_{R/4}(x)$ is as in the proof of 
Lemma~\ref{lem-preliminary-esti-first-1}. By the maximum principle
$p_R\ge 0$ in $B_R\times (0,\infty)$. Then putting $R=R_i$ in 
\eqref{eq-I-by-H-55} and letting $i\to\infty$, 
by \eqref{eq-convergence-of-h-using-L-1-integrability-2},
\begin{equation*}
\int_{\R^N}p(x,s_1)\,dx\leq \int_{\R^N}f\,dx.
\end{equation*}
Since $0<s_1\leq s_0$ is arbitrary, \eqref{eq-moreover-properties-of-q} 
follows. 
\end{proof}

\begin{lem}\label{lem-first-lemma-for-aymptotic-case}
Let $N\ge 3$. Let $u$, $v$, be two solutions of 
\eqref{eq-cases-main-cauchy-problem} with initial values $u_0$, $v_0$, 
satisfying \eqref{eq-initial-u-0-trapped-by-barenblat} for some constants 
$k_1>k_2>0$ and let $\4{u}$, $\4{v}$, be given by 
\eqref{eq-rescaled-function} with $u=u$, $v$, respectively. Let 
$\4{u}_0(x)=\4{u}(x,-\log T)$ and $\4{v}_0(x)=\4{v}(x,-\log T)$. 
Suppose $u$, $v$, satisfy \eqref{eq-initial-u-0-trapped-by-B} and
\begin{equation*}
\min(\|(\4{u}_0-\4{v}_0)_+\|_{L^{\infty}(\R^N)},\|(\4{u}_0-\4{v}_0)_-\|_{L^{\infty}(\R^N)})>0.
\end{equation*}
Then
\begin{equation}\label{eq-tilde-u-tilde-v-strong-decreasing-0012}
\left\|(\4{u}-\4{v})(\cdot,s)\right\|_{L^1(\R^N)}
<\left\|\4{u}_0-\4{v}_0\right\|_{L^1(\R^N)}\quad\forall s>-\log T.
\end{equation}
\end{lem}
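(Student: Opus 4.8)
The plan is to sharpen the $L^1$-contraction of Corollary~\ref{cor-tilde-u-L-1-contraction-987} by tracking the positive and negative parts of $w:=\4u-\4v$ separately: each of $\int_{\R^N}w_\pm(\cdot,s)\,dx$ turns out to be non-increasing, and equality in the resulting estimates would force $w$ to keep a single sign after $s=-\log T$, contradicting the non-triviality of both $(w_0)_+$ and $(w_0)_-$. Throughout write $\sigma=-\log T$ and $w_0:=\4u_0-\4v_0=w(\cdot,\sigma)$. I would first record the setup: by the definition of solution and \eqref{eq-initial-tilde-u-trapped-by-barenblat}, $\4u,\4v$ are smooth and positive on $\R^N\times(\sigma,\infty)$, so the mean-value coefficient $\4a:=(\log\4u-\log\4v)/(\4u-\4v)$ (read as $1/\4u$ where $\4u=\4v$) is smooth, satisfies $\log\4u-\log\4v=\4a\,w$, and obeys \eqref{eq-condition-of-Lemma-4-first-437}; hence $w$ is a classical solution of \eqref{eq-difference-between-tilde-u-and-tilde-v}, and by the Kato inequality (as in \eqref{eq-E-by-H-1924}), since $(\log\4u-\log\4v)_\pm=\4a\,w_\pm$, the functions $w_\pm$ are subsolutions of \eqref{eq-difference-between-tilde-u-and-tilde-v} on $\R^N\times(\sigma,\infty)$. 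Finally $w_0\in L^1(\R^N)$ (automatic when $N=3$, where $|w_0|\le\4B_{k_2}-\4B_{k_1}\in L^1(\R^3)$ by \eqref{eq-initial-u-0-trapped-by-barenblat}; part of the standing hypotheses in the $N\ge5$ applications), so Corollary~\ref{cor-tilde-u-L-1-contraction-987} gives $\|w(\cdot,s)\|_{L^1}\le\|w_0\|_{L^1}$ for all $s>\sigma$.

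Next I would prove that $P(s):=\int_{\R^N}w_+(\cdot,s)\,dx$ and $M(s):=\int_{\R^N}w_-(\cdot,s)\,dx$ are non-increasing on $[\sigma,\infty)$. Fix $s_0'\ge\sigma$ and $s_1>s_0'$, and apply Lemma~\ref{lem-L-1-integrabilities-of-p-456} on the time interval $(s_0',s_1)$ with the coefficient $\4a$ above, initial datum $f=w_+(\cdot,s_0')\in L^1(\R^N)\cap L^\infty(\R^N)$, and $g=\hat g=w_+$, which is non-negative, continuous and in $L^1(\R^N\times[s_0',s_1])$ since $\int_{s_0'}^{s_1}P(s)\,ds\le(s_1-s_0')\|w_0\|_{L^1}<\infty$; this produces radii $R_i\to\infty$ and a classical solution $p$ of \eqref{eq-difference-between-tilde-u-and-tilde-v} with $\int_{\R^N}p(\cdot,s)\,dx\le\int_{\R^N}f\,dx=P(s_0')$, realised as the locally uniform limit of the Dirichlet solutions $p_{R_i}$ of \eqref{eq-case-aligned-problem-for-q-in-B-r} with boundary data $w_+|_{\partial B_{R_i}}$. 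Since $w_+$ is a subsolution agreeing with $p_{R_i}$ on the parabolic boundary of $B_{R_i}\times(s_0',s_1)$, the comparison principle gives $w_+\le p_{R_i}$, hence $w_+\le p$, so $P(s)\le\int_{\R^N}p(\cdot,s)\,dx\le P(s_0')$ for $s\in(s_0',s_1)$; applying this to $-w$ gives the statement for $M$. Taking $s_0'=\sigma$ (the comparison at the initial time being understood in the $L^1_{loc}$ sense, using $\4u(\cdot,s)\to\4u_0$, $\4v(\cdot,s)\to\4v_0$ in $L^1_{loc}(\R^N)$ as $s\to\sigma^+$) yields in particular $P(s)\le P(\sigma)=\int_{\R^N}(w_0)_+\,dx$ and $M(s)\le M(\sigma)=\int_{\R^N}(w_0)_-\,dx$ for all $s>\sigma$.

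Finally I would argue by contradiction: suppose $\|w(\cdot,s_1)\|_{L^1}=\|w_0\|_{L^1}$ for some $s_1>\sigma$. Then $P(s_1)+M(s_1)=P(\sigma)+M(\sigma)$ together with $P(s_1)\le P(\sigma)$ and $M(s_1)\le M(\sigma)$ forces $P(s_1)=P(\sigma)$, $M(s_1)=M(\sigma)$, and then the monotonicity forces $P\equiv P(\sigma)$ and $M\equiv M(\sigma)$ on $(\sigma,s_1]$. Re-running the construction on a subinterval $(s_0',s_1)$ with $\sigma<s_0'<s_1$: since $\int_{\R^N}w_+(\cdot,s)\,dx=P(s_0')=\int_{\R^N}p(\cdot,s)\,dx$ for $s\in(s_0',s_1]$ and $w_+\le p$, we get $w_+=p$ a.e., hence (by continuity) $w_+$ is a classical solution of \eqref{eq-difference-between-tilde-u-and-tilde-v} on $\R^N\times(s_0',s_1]$. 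Writing that equation in non-divergence form $q_s=\4a\,\La q+(2\nabla\4a+\tfrac{x}{N-2})\cdot\nabla q+(\La\4a+\tfrac{N}{N-2})q$, which is locally uniformly parabolic because $\4a\ge C_1>0$, and noting $w_+\ge0$ with $P(s)=P(\sigma)=\int_{\R^N}(w_0)_+\,dx>0$, so $w_+(\cdot,s)\not\equiv0$, for $s\in(s_0',s_1]$, the strong maximum principle gives $w_+>0$ on all of $\R^N\times(s_0',s_1]$. Since $w_+=\max(w,0)$, this means $w>0$ everywhere there, so $w_-\equiv0$ and $M(s_1)=0$, contradicting $M(s_1)=M(\sigma)=\int_{\R^N}(w_0)_-\,dx>0$ (positive because $\|(w_0)_-\|_{L^\infty}>0$ by hypothesis). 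Hence no such $s_1$ exists, and \eqref{eq-tilde-u-tilde-v-strong-decreasing-0012} follows.

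I expect the main obstacle to be the monotonicity step: upgrading the global $L^1$-contraction to separate control of $P$ and $M$ requires handling the flux of $w_\pm$ at spatial infinity, which is precisely the role of Lemma~\ref{lem-L-1-integrabilities-of-p-456}, and then observing that equality in its mass estimate pins $w_+$ down to a genuine solution of \eqref{eq-difference-between-tilde-u-and-tilde-v} rather than a mere subsolution, after which the strong maximum principle closes the argument. A secondary technical point is the $L^1_{loc}$ attainment of the initial data at $s=\sigma$, needed to compare $P(s)$ and $M(s)$ with their values at $\sigma$.
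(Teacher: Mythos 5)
Your proof is correct, but it extracts the strict inequality by a genuinely different mechanism than the paper. Both arguments share the same backbone — the difference $q=\4u-\4v$ solves the linear equation \eqref{eq-difference-between-tilde-u-and-tilde-v} with the mean-value coefficient \eqref{eq-mean-value-thm-coeffi-a-1}, and Lemma \ref{lem-L-1-integrabilities-of-p-456} is the device that kills the flux at spatial infinity. The paper then follows Hsu and Osher--Ralston: it writes $q=q_1^R-q_2^R$ as a difference of two \emph{nonnegative solutions} of the Dirichlet problem with lateral data $q_+$, $q_-$, uses the identity $|q|=q_1^R+q_2^R-2\min(q_1^R,q_2^R)$ to isolate an explicit loss term $2\int_{|x|\le R_0}\min(\2{q}_1,\2{q}_2)\,dx$, and bounds that term below by a positive constant $c(s)$ via the Green function representation of the zero-boundary-value solutions $\2{q}_{\pm}^{2R_0}$. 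You instead work with the \emph{subsolutions} $q_{\pm}$ directly, prove that $\int q_+$ and $\int q_-$ are each non-increasing by dominating them with the solutions produced by Lemma \ref{lem-L-1-integrabilities-of-p-456}, and then run a rigidity argument: equality of the total mass forces $q_+$ to coincide with the dominating solution, hence to be a genuine solution, after which the strong maximum principle makes $q$ one-signed and contradicts the hypothesis that both $(\4u_0-\4v_0)_{\pm}$ are nontrivial. The paper's route buys a quantitative lower bound on the decrease (useful if one wanted a rate), while yours is softer but conceptually cleaner and suffices for the Osher--Ralston step in the proof of Theorem \ref{lem-main-theorem-with-finite-integral-MS}. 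Two technical points you share with the paper rather than resolve better or worse: the comparison at the initial time $s=-\log T$ is only in $L^1_{loc}$, and the finiteness of $\|\4u_0-\4v_0\|_{L^1}$ (automatic for $N=3$, an implicit extra hypothesis for $N\ge 4$) is needed for the statement to have content; you correctly flag both.
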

\begin{proof}
We will use a modification of the proof of Lemma 2.1 of \cite{Hs1} 
to prove the lemma (cf. Lemma 3.1 of \cite{DS1}).  Let $q=\4{u}-\4{v}$. 
Then $q$ satisfies \eqref{eq-difference-between-tilde-u-and-tilde-v} 
in $\R^N\times (-\log T,\infty)$ with
\begin{equation}\label{eq-mean-value-thm-coeffi-a-1}
\4{a}(x,s)=\int_0^1\frac{d\theta}{\theta\4{u}+(1-\theta)\4{v}}.
\end{equation}
Since both $\4{u}$ and $\4{v}$ satisfy 
\eqref{eq-initial-tilde-u-trapped-by-barenblat}, $\4{a}(x,s)$ satisfies 
the growth estimate
\begin{equation}\label{eq-mean-value-thm-coeffi-a-bound-lower-upper-section-3}
\frac{k_2+|x|^2}{2(N-2)}\leq \4{a}(x,s) \leq \frac{k_1+|x|^2}{2(N-2)}.
\end{equation}
Hence \eqref{eq-difference-between-tilde-u-and-tilde-v} is 
uniformly parabolic on any compact subset of 
$\R^N\times(-\log T,\infty)$.

For any $R>0$, by standard parabolic theory there exist solutions 
$q_1^R$, $q_2^R$ of \eqref{eq-difference-between-tilde-u-and-tilde-v} in 
$Q_R=B_R(0)\times(-\log T,\infty)$ with initial values 
$q_+(\cdot,-\log T)$, $q_-(\cdot,-\log T)$ and boundary value 
$q_+$, $q_-$ on $\partial B_R(0)\times(-\log T,\infty)$, respectively. 
Notice that $q_1^R-q_2^R$ is a solution of 
\eqref{eq-difference-between-tilde-u-and-tilde-v} in 
$Q_R$ with initial value $q(\cdot,-\log T)$ and boundary values 
$q$. By the maximum principle $q=q^R_1-q^R_2$ on $Q_R$. Similarly 
there are solutions $\overline{q}_1^R$, $\overline{q}_2^R$ of 
\eqref{eq-difference-between-tilde-u-and-tilde-v} in $Q_R$ 
with initial values $q_+(\cdot,-\log T)$, $q_-(\cdot,-\log T)$ and zero 
lateral boundary value. By the maximum principle 
\begin{equation}\label{q^R-comparsion}
\begin{cases}
\begin{aligned}
0\le\2{q}^R_1&\leq q^R_1 \quad\mbox{ and }\quad 0\le\2{q}^R_2\le q^R_2
\qquad\mbox{ in }Q_R\\
\2{q}_1^R&\leq\2{q}_1^{R'} \quad\mbox{ and }\quad \2{q}_2^R
\leq \2{q}_2^{R'}\qquad\quad\,\,\mbox{ in }Q_R\quad \forall R'\geq R>0.
\end{aligned}
\end{cases}
\end{equation}
Since both $\4{u}$ and $\4{v}$ satisfy 
\eqref{eq-initial-tilde-u-trapped-by-barenblat}, 
\begin{equation}\label{q-bound}
|q|\le\4{B}_{k_2}-\4{B}_{k_1}\quad\mbox{ in }\R^N\times (-\log T,\infty).
\end{equation}
By \eqref{q^R-comparsion} and \eqref{q-bound}
the families of solutions $\overline{q}_1^{R}(x,s)$ and 
$\overline{q}_2^{R}(x,s)$ are monotone increasing in $R$ and 
uniformly bounded above by $\4{B}_{k_2}-\4{B}_{k_1}$, which implies
\begin{equation*}
\overline{q}_1=\lim_{R\to\infty}\overline{q}_1^R \qquad \mbox{and} 
\qquad \overline{q}_2=\lim_{R\to\infty}\overline{q}_2^R
\end{equation*}
exist and are both solutions of 
\eqref{eq-difference-between-tilde-u-and-tilde-v} in $\R^N\times(0,\infty)$.

Let $\eta_{R'}\in C^{\infty}_0(\R^N)$ be as in the proof of 
Lemma~\ref{lem-preliminary-esti-first-1}. 
By Lemma~\ref{lem-before-L-1-contraction-46} and the same 
computation as the proof of Lemma 2.1 of \cite{Hs1},
\begin{equation*}
\begin{aligned}
&\int_{\R^N}|q(x,s)|\eta_{R'}(x)\,dx
-\int_{\R^N}|\4{u}_0-\4{v}_0|(x)\eta_{R'}(x)\,dx\\
&\qquad \qquad =\int_{-\log T}^{s}\int_{|x|\le R}\left(\4{a}(x,\tau)
q^R_1(x,\tau)\La\eta_{R'}-\frac{1}{N-2}q^R_1(x,\tau)x
\cdot\nabla\eta_{R'}\right)\,dx\,d\tau\\
&\qquad \qquad \qquad +\int_{-\log T}^{s}\int_{|x|\le R}\left(
\4{a}(x,\tau)q^R_2(x,\tau)\La \eta_{R'}-\frac{1}{N-2}q^R_2
(x,\tau)x\cdot\nabla\eta_{R'}\right)\,dx\,d\tau\\
&\qquad \qquad \qquad -2\int_{|x|\le R}\min(q_1^R(x,s),q_2^R(x,s))
\eta_{R'}(x)\,dx\qquad\quad\forall R\ge 2R'>0, s>-\log T.
\end{aligned}
\end{equation*}
Hence
\begin{equation}\label{eq-aligned-call-W-by-H-048}
\begin{aligned}
&\int_{\R^N}|q(x,s)|\eta_{R'}(x)\,dx
-\int_{\R^N}|\4{u}_0-\4{v}_0|(x)\eta_{R'}(x)\,dx\\
\le&\frac{C}{{R'}^2}\int_{-\log T}^{s}
\int_{R'\le |x|\le 2R'}\4{a}(x,\tau)q^R_1(x,\tau)
\,dxd\tau+C\int_{-\log T}^{s}\int_{R'\leq|x|\leq {2R'}}q^R_1(x,\tau)
\,dx\,d\tau\\
&\qquad +\frac{C}{{R'}^2}\int_{-\log T}^{s}
\int_{R'\le |x|\le 2R'}\4{a}(x,\tau)q^R_2(x,\tau)\,dxd\tau
+C\int_{-\log T}^{s}\int_{R'\le |x|\le 2R'}q^R_2(x,\tau)
\,dx\,d\tau\\
&\qquad -2\int_{|x|\le R_0}\min (\overline{q}_1^R(x,s),\2{q}_2^R(x,s))
\eta_{R'}(x)\,dx\quad\forall R\ge 2R'>0,R\ge R_0>0,s>-\log T.
\end{aligned}
\end{equation}
By Corollary \ref{cor-tilde-u-L-1-contraction-987}, 
\begin{equation}\label{q-L^1}
0\le q_+,q_-\le |q|\in L^1(\R^N\times(-\log T,s))\quad\forall s>-\log T.
\end{equation}
Let $s>-\log T$ be fixed. Then by \eqref{q-L^1} and 
Lemma \ref{lem-L-1-integrabilities-of-p-456} there exists a sequence 
of positive numbers $\{R_i\}_{i=1}^{\infty}$, $R_i\to\infty$ as 
$i\to\infty$, such that $q_1^{R_i}$, $q_2^{R_i}$, converges uniformly 
on every compact subset of $\R^N\times(-\log T,s]$ to some solutions 
$\4{q}_1$, $\4{q}_2$, of \eqref{eq-difference-between-tilde-u-and-tilde-v} 
respectively as $i\to\infty$. Moreover
\begin{equation*}
\begin{cases}
\int_{\R^N}\4{q}_1(x,\tau)\,dx\leq \int_{\R^N}|\4{u}_0-\4{v}_0|\,dx 
\qquad \forall -\log T\le\tau\le s\\
\int_{\R^N}\4{q}_2(x,\tau)\,dx\leq \int_{\R^N}|\4{u}_0-\4{v}_0|\,dx 
\qquad \forall -\log T\le\tau\le s.
\end{cases}
\end{equation*}
Hence 
\begin{equation}\label{eq-L-1-integral-of-tilde-q-pm-657}
\4{q}_1, \4{q}_2\in L^1(\R\times[0,s]).
\end{equation}
Putting $R=R_i$ in \eqref{eq-aligned-call-W-by-H-048} and letting
$i\to\infty$, by \eqref{eq-mean-value-thm-coeffi-a-bound-lower-upper-section-3},
\begin{equation}\label{eq-aligned-call-W-by-H-0489}
\begin{aligned}
&\int_{\R^N}|q(x,s)|\eta_{R'}(x)\,dx-\int_{\R^N}|\4{u}_0-\4{v}_0|(x)
\eta_{R'}(x)\,dx\\
\le&C\int_{-\log T}^{s}\int_{R'\le |x|\le 2R'}
\4{q}_1(x,\tau)\,dx\,d\tau+C\int_{-\log T}^{s}\int_{R'\leq|x|\leq 2R'}
\4{q}_2(x,\tau)\,dx\,d\tau\\
&\qquad -2\int_{|x|\le R_0}\min (\2{q}_1(x,s),\2{q}_2(x,s))\eta_{R'}\,dx
\quad\forall R'\ge 1,R_0>0.
\end{aligned}
\end{equation}
By \eqref{eq-L-1-integral-of-tilde-q-pm-657},
\begin{equation*}
\int_{-\log T}^{s}\int_{R'\le |x|\le 2R'}\4{q}_{j}(x,\tau)
\,dxd\tau\to 0 \qquad \mbox{as $R'\to\infty$}, \,\, j=1,2.
\end{equation*}
Hence letting $R'\to\infty$ in \eqref{eq-aligned-call-W-by-H-0489},
\begin{equation}\label{q-q1-q2}
\int_{\R^N}|q(x,s)|\,dx\le\int_{\R^N}|\4{u}_0-\4{v}_0|(x)\,dx 
-2\int_{|x|\le R_0}\min (\overline{q}_1(x,s),\2{q}_2(x,s))\,dx
\end{equation}
holds for any $R_0>0$, $s>-\log T$. We now choose $R_0>0$ such that
\begin{equation*}
\min(\|(\4{u}_0-\4{v}_0)_+\|_{L^{\infty}(B_{R_0}(0))},
\|(\4{u}_0-\4{v}_0)_-\|_{L^{\infty}(B_{R_0}(0))})>0.
\end{equation*}
Since $\2{q}_1\ge\overline{q}_+^{2R_0}$ and $\2{q}_2\ge\overline{q}_-^{2R_0}$, 
by \eqref{q-q1-q2},
\begin{equation*}
\int_{\R^N}|q(x,s)|\,dx-\int_{\R^N}|\4{u}_0-\4{v}_0|(x)\,dx 
\leq -2\int_{|x|\le R_0}\min (\overline{q}_{+}^{2R_0}(x,s),
\overline{q}_{-}^{2R_0}(x,s))\,dx\quad\forall s>-\log T.
\end{equation*}
Since $\overline{q}_{+}^{2R_0}(x,s)$ and $\overline{q}_{-}^{2R_0}(x,s)$ 
are the solutions of \eqref{eq-difference-between-tilde-u-and-tilde-v} 
in $Q_{2R_0}$ with zero boundary value and initial values 
$q_+(\cdot,-\log T)$, $q_+(\cdot,-\log T)$, respectively, by the 
Green function representation for solutions, for any 
$s>-\log T$, there exists a constant $c(s)$ such that 
\begin{equation*}
\min_{|x|\le R_0}\4{q}_+^{2R_0}\geq c(s)>0\qquad \mbox{and} 
\qquad \min_{|x|\le R_0}\4{q}_-^{2R_0}\geq c(s)>0
\end{equation*} 
and the lemma follows.
\end{proof}

We next note that $\4{B}_k$ given by \eqref{eq-rescaled-Barenblatt-23}
is a stationary solution of \eqref{eq-rescaled-main-equation-of-v}
for any $k>0$. By an argument similar to the proof of Lemma 1 of \cite{OR} 
we have the following lemma.

\begin{lem}(cf. Lemma 1 of \cite{OR})
\label{Osher-and-Ralston-Lemma-0}
Suppose $\|\4{u}(\cdot,s_i)-\4{w}_0\|_{L^1(\R^N)}\to 0$ as 
$i\to\infty$.  If $\4{w}$ is a 
solution of \eqref{eq-rescaled-main-equation-of-v} in 
$\R^N\times[0,\infty)$ with initial value $\4{w}(x,0)=\4{w}_0(x)$, then
\begin{equation*}
\|\4{w}(\cdot,s)-\4{B}_k\|_{L^1(\R^N)}=\|\4{w}_0-\4{B}_k\|_{L^1(\R^N)}
\quad\forall s>0,k>0
\end{equation*}
where $\4{B}_k$ is given by \eqref{eq-rescaled-Barenblatt-23}.
\end{lem}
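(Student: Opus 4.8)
The plan is to adapt the argument of Osher--Ralston (Lemma 1 of \cite{OR}), exploiting the $L^1$-contraction property that now holds in the weighted-free setting for solutions bounded below by a Barenblatt solution (Lemma~\ref{eq-L-1-contraction-principle} and Corollary~\ref{cor-tilde-u-L-1-contraction-987}). First I would observe that since $\4{B}_k$ is a stationary solution of \eqref{eq-rescaled-main-equation-of-v} and $\4{w}$ is a solution of the same equation with initial data $\4{w}_0$, the rescaled $L^1$-contraction principle (Corollary~\ref{cor-tilde-u-L-1-contraction-987}, applied after undoing the rescaling to two genuine solutions of \eqref{eq-cases-main-cauchy-problem}) gives that $s\mapsto\|\4{w}(\cdot,s)-\4{B}_k\|_{L^1(\R^N)}$ is non-increasing. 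Thus it suffices to rule out strict decrease, i.e.\ to show the limit as $s\to\infty$ equals the value at $s=0$.

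The key idea is that the hypothesis $\|\4{u}(\cdot,s_i)-\4{w}_0\|_{L^1(\R^N)}\to 0$ forces $\4{w}_0$ to be an accumulation point of the orbit $\{\4u(\cdot,s)\}$ along $s_i\to\infty$, and one can combine this with the semigroup property and the non-increasing quantity $\|\4{u}(\cdot,s)-\4B_k\|_{L^1(\R^N)}$. Concretely: by the contraction principle applied to $\4u$ and $\4B_k$ (viewing $\4B_k$ as the rescaling of the Barenblatt solution), the function $\phi(s):=\|\4{u}(\cdot,s)-\4{B}_k\|_{L^1(\R^N)}$ is non-increasing in $s$, hence converges to some limit $\ell\ge0$ as $s\to\infty$. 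In particular $\phi(s_i)\to\ell$; but by the triangle inequality and $\|\4u(\cdot,s_i)-\4w_0\|_{L^1}\to0$ we get $\phi(s_i)\to\|\4w_0-\4B_k\|_{L^1(\R^N)}$, so $\|\4w_0-\4B_k\|_{L^1(\R^N)}=\ell$. Now fix any $s>0$. Using the contraction property for the pair $\4u(\cdot,s_i+s)$ and $\4w(\cdot,s)$ together with continuous dependence on initial data (the coefficient bounds \eqref{eq-mean-value-thm-coeffi-a-bound-lower-upper-section-3} make the linearized equation uniformly parabolic on compacts, so $\4u(\cdot,s_i+s)\to\4w(\cdot,s)$ in $L^1_{loc}$, and a uniform tail bound coming from the trapping \eqref{eq-initial-tilde-u-trapped-by-barenblat} upgrades this to $L^1(\R^N)$), we obtain $\phi(s_i+s)=\|\4u(\cdot,s_i+s)-\4B_k\|_{L^1}\to\|\4w(\cdot,s)-\4B_k\|_{L^1(\R^N)}$. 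Since $\phi(s_i+s)\to\ell$ as well, we conclude $\|\4{w}(\cdot,s)-\4{B}_k\|_{L^1(\R^N)}=\ell=\|\4{w}_0-\4{B}_k\|_{L^1(\R^N)}$ for every $s>0$, which is the claim.

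The main obstacle I anticipate is justifying the passage from $L^1_{loc}$ convergence of $\4u(\cdot,s_i+s)$ to $\4w(\cdot,s)$ up to genuine $L^1(\R^N)$ convergence, uniformly in $s_i$, so that the identity $\phi(s_i+s)\to\|\4w(\cdot,s)-\4B_k\|_{L^1}$ is valid. The estimate \eqref{eq-initial-tilde-u-trapped-by-barenblat} gives $\4B_{k_1}\le\4u\le\4B_{k_2}$, so $|\4u(\cdot,\tau)-\4B_{k_1}|\le\4B_{k_2}-\4B_{k_1}\in L^1(\R^N)$ for $N\ge 5$ (where $\4B_{k_2}-\4B_{k_1}=O(|x|^{-4})$ is integrable); the same bound passes to $\4w$. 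Hence for any $\e>0$ one can choose $\rho$ so large that $\int_{|x|>\rho}(\4B_{k_2}-\4B_{k_1})\,dx<\e$, which controls the tails of all the functions involved uniformly, and on $B_\rho(0)$ the $L^1_{loc}$ convergence does the job. The second, more routine point is verifying that Corollary~\ref{cor-tilde-u-L-1-contraction-987} indeed applies to the pair $(\4w,\4B_k)$: this requires writing $\4w$ as the rescaling of a solution of \eqref{eq-cases-main-cauchy-problem} with $w\ge B_k$, which follows since a solution of \eqref{eq-rescaled-main-equation-of-v} trapped between $\4B_{k_1}$ and $\4B_{k_2}$ rescales back to one trapped between $B_{k_1}$ and $B_{k_2}\ge B_k$ provided $k\le k_1$; for $k>k_1$ one argues directly from the maximum principle on the rescaled equation using that $\4B_k$ is stationary.
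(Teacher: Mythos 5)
Your proof is correct and is essentially the Osher--Ralston contraction-semigroup argument, which is exactly what the paper itself invokes (it gives no proof of this lemma beyond citing Lemma 1 of \cite{OR}): monotonicity of $\phi(s)=\|\4{u}(\cdot,s)-\4{B}_k\|_{L^1}$ gives a limit $\ell$, the hypothesis identifies $\ell=\|\4{w}_0-\4{B}_k\|_{L^1}$, and the contraction for the pair $(\4{u}(\cdot,s_i+\cdot),\4{w})$ identifies $\ell=\|\4{w}(\cdot,s)-\4{B}_k\|_{L^1}$. Two small remarks: the detour through $L^1_{loc}$ convergence plus tail bounds is redundant, since Corollary~\ref{cor-tilde-u-L-1-contraction-987} already yields $\|\4{u}(\cdot,s_i+s)-\4{w}(\cdot,s)\|_{L^1}\le\|\4{u}(\cdot,s_i)-\4{w}_0\|_{L^1}\to 0$ directly (and likewise the case $k>k_1$ needs no separate maximum-principle argument --- both $\4{w}$ and $\4{B}_k$ dominate $\4{B}_{\max(k,k_1)}$, which is all Lemma~\ref{eq-L-1-contraction-principle} requires); also your parenthetical that $\4{B}_{k_2}-\4{B}_{k_1}=O(|x|^{-4})$ is integrable ``for $N\ge 5$'' has the dimensions backwards --- $O(|x|^{-4})$ is integrable on $\R^N$ only for $N\le 3$, which is harmless here since this lemma belongs to the $N=3$ section, but would matter if you tried to reuse the same tail bound in the $N\ge5$ setting of Lemma~\ref{Osher-and-Ralston-Lemma-1}.
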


\begin{proof}[\textbf{Proof of Theorem \ref{lem-main-theorem-with-finite-integral-MS}}]
Since the proof of the case $N=3$ is similar to that of \cite{Hs1} and 
section 3 of \cite{DS1}, we will only sketch the argument here. Let
\begin{equation*}
f(k)=\int_{\R^N}(u_0(x)-B_k(x))\,dx.
\end{equation*}
Then $f(k)$ is a continuous monotone increasing function of $k>0$. 
By \eqref{eq-initial-u-0-trapped-by-barenblat}, $f(k_1)\ge 0\ge f(k_2)$. 
Hence by the intermediate value theorem 
there exists a unique $k_0$ such that $f(k_0)=0$. By 
\eqref{eq-initial-tilde-u-trapped-by-barenblat},
Lemma \ref{lem-first-lemma-for-aymptotic-case}, 
Lemma \ref{Osher-and-Ralston-Lemma-0}, and an argument similar 
to the proof of Theorem 2.3 in \cite{Hs1}, one gets that 
the rescaled function $\4{u}(\cdot,s)$ converges uniformly on 
$\R^3$, and also in $L^1(\R^3)$, to the rescaled Barenblatt solution 
$\4{B}_{k_0}$ as $s\to\infty$.
\end{proof}

\section{The non-integrable case $(N \ge 5)$}
\label{section-The-Non-Integral-Case-I-N-geq-4}
\setcounter{equation}{0}
\setcounter{thm}{0}

In this section we will prove Theorem~\ref{lem-main-theorem-MS}. 
Since the difference of any two solutions 
$u$, $v$ of \eqref{eq-cases-main-cauchy-problem} that satisfies 
\eqref{eq-initial-u-0-trapped-by-B} may not be integrable when $N\ge 4$, 
for any solution $u$ that satisfies \eqref{eq-initial-u-0-trapped-by-B}
we cannot ensure the existence of a constant $k_0>0$ such that  
\eqref{initial-mean=0} holds from the condition 
\eqref{eq-initial-u-0-trapped-by-B} alone. 
Thus we need additional conditions on the initial data to ensure convergence. 
We will assume that $u_0$ also satisfies \eqref{u0-B-in-l1} for some constant 
$k_0>0$ and function $f\in L^1(\R^N)$ in this section.
Unless stated otherwise in this section we will assume that $u$ is a 
solution of \eqref{eq-cases-main-cauchy-problem} which satisfies the
bound \eqref{eq-initial-u-0-trapped-by-B}, $\4{u}$ will 
denote the rescaled solution defined by \eqref{eq-rescaled-function}, 
and $\4{B}_k$ will be the rescaled Barenblatt solution given by 
\eqref{eq-rescaled-Barenblatt-23}. 

We will use a modification of the technique of \cite{DS1} to find the 
asymptotic behaviour of the solution of \eqref{eq-cases-main-cauchy-problem} 
near its extinction time $T$. The following simple convergence result 
will be used in the proof of Theorem~\ref{lem-main-theorem-MS}.

\begin{lem}\label{lem-convergence-on-compact-subset-of-R-N=4}
Let $u_0$ satisfy \eqref{eq-initial-u-0-trapped-by-barenblat} for some 
constants $k_2>k_1>0$ and $u$ be a solution of 
\eqref{eq-cases-main-cauchy-problem} that satisfies
\eqref{eq-initial-u-0-trapped-by-B}. Let $\4{u}$ be given by 
\eqref{eq-rescaled-function}.
Let $\{s_i\}_{i=1}^{\infty}$ be a sequence of positive numbers such that 
$s_i\to\infty$ as $i\to\infty$ and $\4{u}_i(\cdot,s)=\4{u}(\cdot,s_i+s)$. 
Then the sequence $\{\4{u}_i\}_{i=1}^{\infty}$ has a subsequence 
$\{\4{u}_{i_k}\}_{k=1}^{\infty}$ that converges uniformly 
on every compact subsets of $\R^N\times(-\infty,\infty)$ to a solution 
$\4{w}(x,s)$ of \eqref{eq-rescaled-main-equation-of-v} in 
$\R^N\times(-\infty,\infty)$ which satisfies 
\eqref{eq-initial-tilde-u-trapped-by-barenblat} in $\R^N\times
(-\infty,\infty)$ as $k\to\infty$.
\end{lem}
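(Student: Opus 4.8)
The plan is to establish the convergence by a standard compactness argument based on the a priori bounds and parabolic regularity theory. First I would fix the sequence $\{s_i\}$ and set $\4{u}_i(x,s)=\4{u}(x,s_i+s)$. Since each $\4{u}_i$ is a time-translate of $\4{u}$, it solves \eqref{eq-rescaled-main-equation-of-v} on $\R^N\times(-s_i-\log T,\infty)$, and by \eqref{eq-initial-tilde-u-trapped-by-barenblat} we have the two-sided bound $\4{B}_{k_1}(x)\le\4{u}_i(x,s)\le\4{B}_{k_2}(x)$ wherever $\4{u}_i$ is defined; in particular this bound is uniform in $i$ on any fixed compact set of $\R^N\times(-\infty,\infty)$ once $s_i$ is large enough.

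Next I would exploit the fact that $\4{B}_{k_1}$ and $\4{B}_{k_2}$ are strictly positive and locally bounded, so on any compact set $K\subset\R^N\times(-\infty,\infty)$ the solutions $\4{u}_i$ are bounded away from $0$ and $\infty$ uniformly in $i$ (for $i$ large). Hence on $K$ the equation \eqref{eq-rescaled-main-equation-of-v}, which can be written in the form $\4{u}_s=\operatorname{div}\bigl(\4{u}^{-1}\nabla\4{u}+(N-2)^{-1}x\,\4{u}\bigr)$, is uniformly parabolic with smooth coefficients depending only on the known bounds. By the interior Schauder estimates for quasilinear uniformly parabolic equations (as in \cite{LSU}), the sequence $\{\4{u}_i\}$ is then equi-H\"older continuous together with its derivatives up to order two in space and one in time on every compact subset of $\R^N\times(-\infty,\infty)$. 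Applying the Ascoli--Arzel\`a theorem together with a diagonalization argument over an exhausting sequence of compact sets yields a subsequence $\{\4{u}_{i_k}\}$ converging in $C^{2,1}_{loc}$ to some limit $\4{w}$.

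Finally I would verify that the limit $\4{w}$ has the required properties. The $C^{2,1}_{loc}$ convergence lets us pass to the limit in every term of \eqref{eq-rescaled-main-equation-of-v}, so $\4{w}$ is a classical solution of the rescaled equation on all of $\R^N\times(-\infty,\infty)$; and since each $\4{u}_{i_k}$ satisfies $\4{B}_{k_1}\le\4{u}_{i_k}\le\4{B}_{k_2}$ on the relevant domain, the inequality $\4{B}_{k_1}(x)\le\4{w}(x,s)\le\4{B}_{k_2}(x)$ passes to the limit pointwise on $\R^N\times(-\infty,\infty)$, which is exactly \eqref{eq-initial-tilde-u-trapped-by-barenblat}. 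I do not expect a serious obstacle here; the only point requiring a little care is ensuring that the Schauder-type estimates are genuinely uniform in $i$, which follows because the ellipticity constants and coefficient bounds on each fixed compact set depend only on $\4{B}_{k_1}$, $\4{B}_{k_2}$ and the size of the compact set, not on $i$.
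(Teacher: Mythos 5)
Your proposal is correct and follows essentially the same route as the paper: the two-sided Barenblatt bound \eqref{eq-initial-tilde-u-trapped-by-barenblat} gives uniform parabolicity on compact sets, the Schauder estimates of \cite{LSU} give equi-H\"older continuity in $C^{2,1}$, and Ascoli--Arzel\`a with diagonalization yields the convergent subsequence, with the equation and the bounds passing to the limit.
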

\begin{proof}
Since $\4{u}$ satisfies 
\eqref{eq-initial-tilde-u-trapped-by-barenblat} in $\R^N\times
(-\log T,\infty)$, equation 
\eqref{eq-rescaled-main-equation-of-v} is uniformly parabolic 
on $B_{R}\times\left[-\frac{\log T}{2}-s_i,\infty\right)$, for 
any $R>0$. By the Schauder estimates for parabolic parabolic equation 
\cite{LSU} the sequence $\4{u}_i$ is equi-H\"older continuous in $C^2$
on every compact subsets of $\R^N\times(-\infty,\infty)$. 
Hence by the Arzela-Ascoli theorem and a diagonalization argument
the sequence $\{\4{u}_i\}_{i=1}^{\infty}$ has a convergent 
subsequence $\{\4{u}_{i_k}\}_{k=1}^{\infty}$ that converges uniformly 
in $C^2$ on every compact subsets of $\R^N\times(-\infty,\infty)$ to a solution 
$\4{w}$ of \eqref{eq-rescaled-main-equation-of-v} in 
$\R^N\times(-\infty,\infty)$ which satisfies 
\eqref{eq-initial-tilde-u-trapped-by-barenblat} in 
$\R^N\times(-\infty,\infty)$ as $k\to\infty$.
\end{proof}

\begin{lem}\label{lem-initial-L-tilde-B-to-L-tilde-B}
Let $N\ge 5$ and let $\4{u}$, $\4{v}$, be two solutions of 
\eqref{eq-rescaled-main-equation-of-v} with initial values $\4{u}_0$, 
$\4{v}_0$, respectively which satisfy 
\eqref{eq-initial-tilde-u-trapped-by-barenblat}. Let $\4{B}=\4{B}_{k_2}$. 
Suppose $\4{u}_0-\4{v}_0\in L^1(\4{B}^{\alpha},\R^N)$ with $\alpha
=\frac{N-4}{2}$. Then there exists a constant $C>0$ such that 
\begin{equation}\label{Barenblatt-L1-bound}
\int_{\R^N}|\4{u}-\4{v}|(x,s)\4{B}^{\alpha}(x)\,dx
\leq \int_{\R^N}|\4{u}_0-\4{v}_0|\4{B}^{\alpha}(x)\,dx+Cs\quad\forall 
s>-\log T.
\end{equation}
\end{lem}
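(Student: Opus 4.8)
The plan is to run a weighted $L^1$ estimate directly on the rescaled equation \eqref{eq-rescaled-main-equation-of-v}, using $\4B^\alpha$ as the weight with $\alpha=\frac{N-4}{2}$. Set $q=\4u-\4v$. By the Kato inequality applied to the rescaled equation, writing $\4a(x,s)=\int_0^1(\theta\4u+(1-\theta)\4v)^{-1}\,d\theta$ so that $\4u-\4v$ solves \eqref{eq-difference-between-tilde-u-and-tilde-v}, one obtains the differential inequality
\begin{equation*}
\partial_s|q|\le \La(\4a\,|q|)+\frac{1}{N-2}\mathrm{div}(x\,|q|)
\end{equation*}
in the sense of distributions. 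Multiplying by $\4B^\alpha$, integrating over $\R^N$ (first against a cutoff $\eta_R$ and then letting $R\to\infty$, using \eqref{q-bound} and the decay of $\4B^\alpha$ to kill the boundary terms — this is where $N\ge 5$, hence $\alpha>0$, and the integrability of $\4B^\alpha|q|$ enters), and integrating by parts twice, the plan is to arrive at
\begin{equation*}
\frac{d}{ds}\int_{\R^N}|q|\,\4B^\alpha\,dx\le \int_{\R^N}|q|\left(\4a\,\La\4B^\alpha-\frac{1}{N-2}x\cdot\D\4B^\alpha\right)dx.
\end{equation*}
So the whole lemma reduces to showing that the second-order operator in parentheses, call it $\mathcal L^*\4B^\alpha:=\4a\,\La\4B^\alpha-\frac{1}{N-2}x\cdot\D\4B^\alpha$, satisfies a bound of the form $\mathcal L^*\4B^\alpha\le C\,\4B^\alpha$ pointwise on $\R^N$ for some constant $C>0$; integrating the resulting inequality
$\frac{d}{ds}\int |q|\4B^\alpha\le C\int|q|\4B^\alpha$ would then give exponential growth, which is even weaker than the claimed linear-in-$s$ bound — so in fact a cleaner target is $\mathcal L^*\4B^\alpha\le C$ (a bounded function, not merely $\le C\4B^\alpha$), together with the uniform bound \eqref{q-bound} on $|q|$, which immediately yields the additive $Cs$ term after integrating in $s$.

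The computational heart is therefore the pointwise estimate of $\4a\,\La\4B^\alpha-\frac{1}{N-2}x\cdot\D\4B^\alpha$. Since $\4B(x)=2(N-2)/(k_2+|x|^2)$ is radial, $\La\4B^\alpha$ and $x\cdot\D\4B^\alpha$ are explicit rational functions of $|x|^2$: one has $x\cdot\D\4B^\alpha=-\alpha\,\4B^\alpha\cdot\frac{2|x|^2}{k_2+|x|^2}$ and a similar explicit formula for $\La\4B^\alpha$ involving $\alpha$, $\alpha-1$, $N$ and powers of $(k_2+|x|^2)$. Using the bound \eqref{eq-mean-value-thm-coeffi-a-bound-lower-upper-section-3}, namely $\4a(x,s)\le \frac{k_1+|x|^2}{2(N-2)}$, and the matching lower bound, one multiplies $\La\4B^\alpha$ by a quantity comparable to $(k+|x|^2)/(N-2)$ and checks that the leading terms as $|x|\to\infty$ cancel against the drift term $-\frac{1}{N-2}x\cdot\D\4B^\alpha$ precisely when $\alpha=\frac{N-4}{2}$. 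The choice $\alpha=\frac{N-4}{2}$ is exactly the exponent for which $\4B^\alpha$ is (up to the $\4a$ vs.\ the exact $\frac{1}{2(N-2)}(k_2+|x|^2)$ discrepancy) an eigenfunction-like object of the adjoint operator with bounded error; I expect the residual to decay like $O(1/|x|^2)$ or to be a bounded rational function, giving $\mathcal L^*\4B^\alpha\le C$.

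The main obstacle I anticipate is controlling the \emph{boundary terms at infinity} in the integration by parts: the weight $\4B^\alpha$ decays like $|x|^{-2\alpha}=|x|^{-(N-4)}$, while the flux $\4a\D|q|$ can be as large as $|x|^2\cdot|\D|q||$, and $x\,|q|$ grows linearly; naively the surface integral over $\partial B_R$ scales like $R^{N-1}\cdot R^{-(N-4)}\cdot R^2\cdot(\text{size of }|q|\text{ or }\D|q|)=R^{5}\cdot(\cdots)$, which does not obviously vanish. The remedy is to exploit that $|q|\le \4B_{k_2}-\4B_{k_1}=O(|x|^{-4})$ from \eqref{q-bound}, and, crucially, pointwise gradient estimates for $\4u,\4v$ (hence for $q$) of the form $|\D q|\le C\,\4B^{1+\varepsilon}$ or at least $|\D q|=O(|x|^{-5})$, obtained from parabolic Schauder estimates applied on annuli where the equation is uniformly parabolic after rescaling $x\mapsto 2^j x$ — a standard scaling argument. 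With $|q|=O(|x|^{-4})$ the surface term $R^{N-1}\cdot R^{-(N-4)}\cdot R\cdot R^{-4}=R^{-2}\to0$ (drift part) and similarly for the diffusion part once the gradient decay is in hand. Alternatively, one can avoid gradient bounds entirely by doing the cutoff integration by parts keeping $\eta_R$, noting $|\La\eta_R|\le C_1/R^2$, $|\D\eta_R|\le C_1/R$, and showing the error terms $\frac{C}{R^2}\int_{R\le|x|\le 2R}\4a\4B^\alpha|q|\,dx$ and $\frac{C}{R}\int_{R\le|x|\le 2R}|x|\4B^\alpha|q|\,dx$ tend to $0$ using only $|q|\4B^\alpha\in L^1$ (so the tail integral $\to0$) and the polynomial bounds on $\4a$, $|x|$, $\4B^\alpha$ — this is the cleaner route and sidesteps the gradient estimate. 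Once the weighted differential inequality $\frac{d}{ds}\int|q|\4B^\alpha\le C$ is established, \eqref{Barenblatt-L1-bound} follows by integrating from $-\log T$ to $s$.
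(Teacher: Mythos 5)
Your skeleton — Kato's inequality for $q=\4{u}-\4{v}$, the weight $\4{B}^{\alpha}$ with $\alpha=\frac{N-4}{2}$, cutoffs $\eta_R$, and the computation of $\4{a}\,\La\4{B}^{\alpha}-\frac{1}{N-2}x\cdot\D\4{B}^{\alpha}$ — is exactly the paper's route. But the two load-bearing estimates are mis-stated in ways that do not close. For the interior term: a pointwise bound $\4{a}\,\La\4{B}^{\alpha}-\frac{1}{N-2}x\cdot\D\4{B}^{\alpha}\le C$ does \emph{not} give $\int_{\R^N}|q|\bigl(\4{a}\,\La\4{B}^{\alpha}-\frac{1}{N-2}x\cdot\D\4{B}^{\alpha}\bigr)dx\le C$, because \eqref{q-bound} only yields $|q|\le\4{B}_{k_2}-\4{B}_{k_1}=O(|x|^{-4})$, which is not integrable on $\R^N$ when $N\ge 5$. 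What actually saves the argument is the sign, not the size: since $N\ge5$ one has $\La\4{B}^{\alpha}=-\frac{(N-4)(2|x|^2+Nk_2)}{(k_2+|x|^2)^2}\4{B}^{\alpha}<0$, so $\4{a}$ may be replaced by its \emph{lower} bound $\frac{k_2+|x|^2}{2(N-2)}$, the drift cancels the $|x|^2$ part exactly, and one is left with $-\frac{Nk_2(N-4)}{2(N-2)(k_2+|x|^2)}\4{B}^{\alpha}<0$. The interior term is then simply discarded; it contributes nothing to the $Cs$.

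For the cutoff error terms: your claim that $\frac{C}{R^2}\int_{R\le|x|\le2R}\4{a}\,\4{B}^{\alpha}|q|\,dx$ and its companions tend to $0$ "using $|q|\4{B}^{\alpha}\in L^1$" is circular — integrability of $|q(\cdot,s)|\4{B}^{\alpha}$ for $s>-\log T$ is the conclusion of the lemma, not a hypothesis. The only a priori information is \eqref{q-bound}, which gives $\4{B}^{\alpha}|q|=O(|x|^{-N})$; hence these annulus integrals are bounded uniformly in $R$ and $s$ (the integral of $|x|^{-N}$ over $R\le|x|\le 2R$ is a fixed constant) but do \emph{not} vanish. That uniform boundedness is precisely the source of the $+Cs$ in \eqref{Barenblatt-L1-bound}: one gets $\frac{d}{ds}\int|q|\4{B}^{\alpha}\eta_R\,dx\le C$ with $C$ independent of $R$ and $s$, integrates in time, and lets $R\to\infty$. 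So the correct structure is: interior term $\le 0$, cutoff terms $\le C$; your version (interior term bounded but paired with a non-integrable $|q|$, cutoff terms $\to 0$) has the roles reversed, and neither half is justified as written.
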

\begin{proof}
Let $\eta_R\in C^{\infty}_0(\R^N)$ be as in the proof of 
Lemma \ref{lem-preliminary-esti-first-1} and let $q=\4{u}-\4{v}$. 
By the Kato inequality \cite{K} $q$ satisfies
\begin{equation*}
|q|_{s}\leq \La(|\log\4{u}-\log\4{v}|)+\frac{1}{N-2}\nabla(x\cdot|q|)
\quad\mbox{ in }\R^N\times (-\log T,\infty)
\end{equation*}
in the distribution sense. Then 
\begin{align*}
&\frac{d}{ds}\int_{\R^N}|\4{u}-\4{v}|(x,s)\4{B}^{\alpha}(x)\eta_R(x)\,dx
\nonumber\\
\le&\int_{\R^N}|\log\4{u}-\log\4{v}|(x,s)\left(\4{B}^{\alpha}(x)
\La\eta_R(x)+\eta_R(x)\La\4{B}^{\alpha}(x)+2\nabla\4{B}^{\alpha}(x)
\cdot\nabla\eta_R(x)\right)\,dx\nonumber\\
&\qquad -\frac{1}{N-2}\int_{\R^N}|\4{u}-\4{v}|(x,s)\,x
\cdot\left\{\eta_R(x)\nabla\4{B}^{\alpha}(x)+\4{B}^{\alpha}(x)\nabla
\eta_{R}(x)\right\}\,dx.
\end{align*}
Hence
\begin{align}\label{eq-Kato-inequality-2}
&\frac{d}{ds}\int_{\R^N}|\4{u}-\4{v}|(x,s)\4{B}^{\alpha}(x)\eta_R(x)\,dx\nonumber\\
\le&\int_{\R^N}|\log\4{u}-\log\4{v}|(x,s)
\left(\4{B}^{\alpha}(x)\La\eta_R(x)+2\nabla\4{B}^{\alpha}(x)\cdot
\nabla\eta_R(x)\right)\,dx\nonumber\\
&\qquad -\frac{1}{N-2}\int_{\R^N}|\4{u}-\4{v}|(x,s)
\4{B}^{\alpha}(x)\,x\cdot\nabla\eta_{R}(x)\,dx\nonumber\\
&\qquad +\int_{\R^N}\left\{\4{a}(x,s)\La
\4{B}^{\alpha}(x)-\frac{1}{N-2}x\cdot\nabla\4{B}^{\alpha}(x)\right\}
|\4{u}-\4{v}|(x,s)\eta_R(x)\,dx\nonumber\\
=&I_{1,R}+I_{2,R}+I_{3,R}\qquad\qquad\forall s>-\log T.
\end{align}
where $\4{a}(x,s)$ is given by \eqref{eq-mean-value-thm-coeffi-a-1}. 
By direct computation,
\begin{equation}\label{eq-direct-computation-La-tilde-B-alpha-1}
\La\4{B}^{\alpha}(x)=-\frac{(N-4)(2|x|^2+k_2N)}{(k_2+|x|^2)^2}\4{B}^{\alpha}
<0\quad\mbox{ in }\R^N.
\end{equation}
Since $\4{u}$, $\4{v}$, satisfies 
\eqref{eq-initial-tilde-u-trapped-by-barenblat}, by \eqref{eq-mean-value-thm-coeffi-a-1}
$\4{a}(x,s)$ satisfies 
\begin{equation}\label{eq-mean-value-thm-coeffi-a-bound-lower-upper}
\frac{k_2+|x|^2}{2(N-2)}\le\4{a}(x,s)\le\frac{k_1+|x|^2}{2(N-2)}
\quad\mbox{ in }\R^N\times (-\log T,\infty).
\end{equation}
Then by \eqref{eq-direct-computation-La-tilde-B-alpha-1}
and \eqref{eq-mean-value-thm-coeffi-a-bound-lower-upper},
\begin{align}\label{eq-direct-computation-La-tilde-B-alpha-nabla-tilde-1}
\4{a}(x,s)\La\4{B}^{\alpha}(x)-\frac{1}{N-2}x\cdot\nabla\4{B}^{\alpha}(x)
&\leq \frac{k_2+|x|^2}{2(N-2)}\La\4{B}^{\alpha}-\frac{1}{N-2}x
\cdot\nabla\4{B}^{\alpha}(x)\nonumber\\
&=-\frac{k_2(N-4)N}{2(N-2)(k_2+|x|^2)}\4{B}^{\alpha}(x)<0
\end{align}
in $\R^N\times (-\log T,\infty)$.
Hence 
\begin{equation}\label{I3-bound}
I_{3,R}\le 0.
\end{equation} 
Since $\4{u}$, $\4{v}\geq \4{B}$, 
\begin{align*}\label{eq-aligned-some-inequality-of-log-2}
|\log\4{u}-\log\4{v}|=&\left|\log\left(\frac{\4{u}}{\4{v}}\right)\right|
\le\left\{\begin{aligned}
&C\left|(\4{u}/\4{v})-1\right|\quad\mbox{ if }\4{u}\ge\4{v}\\
&C\left|(\4{v}/\4{u})-1\right|\quad\mbox{ if }\4{v}\ge\4{u}
\end{aligned}\right.\\
\le&C\4{B}^{-1}|\4{u}-\4{v}|
\end{align*}
for some constant $C>0$. Then
\begin{equation}\label{eq-aligned-estimate-first-I-1-1}
\left|I_{1,R}\right| \leq C_1\int_{B_{2R}\bs B_{R}}|\4{u}-\4{v}|(x,s)
\4{B}^{-1}(x)\left|\4{B}^{\alpha}(x)\La\eta_R(x)+2\nabla\4{B}^{\alpha}(x)
\cdot\nabla\eta_R(x)\right|\,dx
\end{equation}
Since
\begin{equation}\label{rescaled-tilde-B-bound}
|\4{B}|\le\frac{C}{R^2},\quad|\4{B}^{-1}|\le CR^2,\quad 
|\nabla\4{B}|\leq \frac{C}{R^3}, 
\quad |\La\4{B}|\le\frac{C}{R^4},\quad|\nabla\eta_R|\le\frac{C}{R}, 
\quad |\La\eta_R|\le\frac{C}{R^2}
\end{equation}
and
\begin{equation*}
\left|\4{u}-\4{v}\right|\leq \left|\4{B}_{k_1}-\4{B}_{k_2}\right|\leq 
\frac{C}{R^4} 
\end{equation*}
in $B_{2R}(0)\bs B_R(0)$ for any $R>1$ and some constant $C>0$, by
\eqref{eq-aligned-estimate-first-I-1-1} 
\begin{equation}\label{I1-bound}
\left|I_{1,R}\right|\le C'\quad\forall R>1,s>-\log T.
\end{equation}
Similarly there exists a constant $C>0$ such that 
\begin{equation}\label{I2-bound}
|I_{2,R}|\le C\quad\forall R>1,s>-\log T
\end{equation} 
By \eqref{eq-Kato-inequality-2}, 
\eqref{I3-bound}, \eqref{I1-bound}, and \eqref{I2-bound},
\begin{equation*}\label{eq-Kato-inequality-21}
\frac{d}{ds}\int_{\R^N}|\4{u}-\4{v}|(x,s)\4{B}^{\alpha}(x)\eta_R(x)\,dx
\le C\quad\forall R>1, s>-\log T
\end{equation*}
for some constant $C>0$. Integrating the above differential inequality
and letting $R\to\infty$ we get \eqref{Barenblatt-L1-bound}
and the lemma follows.
\end{proof}

\begin{lem}\label{lem-weighted-L-1-contraction-1234}
Let $N\ge 5$ and let $\4{u}$, $\4{v}$, be two solutions of 
\eqref{eq-rescaled-main-equation-of-v} with initial values $\4{u}_0$,
 $\4{v}_0$, satisfying \eqref{eq-initial-tilde-u-trapped-by-barenblat} 
and $\4{u}_0-\4{v}_0\in L^1(\4{B}^{\alpha},\R^N)$ with 
$\alpha=\frac{N-4}{2}$. Let $\4{B}=\4{B}_{k_2}$. If
\begin{equation}\label{tidle-u0-v0-eqn}
\max_{\R^N}|\4{u}_0-\4{v}_0|\neq 0,
\end{equation}
then for any $s>-\log T$ there exist constants $C(s)>0$ and  $R_0>1$ such that
\begin{equation}\label{eq-tilde-u-tilde-v-strong-decreasing-12}
\left\|\left(\4{u}-\4{v}\right)(\cdot,s)\4{B}^{\alpha}\eta_R\right\|_{L^1(\R^N)}
<\left\|\left(\4{u}_0-\4{v}_0\right)(\cdot,s)\4{B}^{\alpha}
\eta_R\right\|_{L^1(\R^N)}-C(s)\quad\forall R\ge R_0
\end{equation}
where $\eta_R$ is as in the proof of 
Lemma \ref{lem-preliminary-esti-first-1}.
\end{lem}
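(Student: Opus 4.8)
The plan is to run the argument of Lemma~\ref{lem-first-lemma-for-aymptotic-case} in the weighted space $L^1(\4B^\alpha,\R^N)$, the exponent $\alpha=\frac{N-4}{2}$ being precisely the one for which the zeroth order contribution of the weight has the favourable sign \eqref{eq-direct-computation-La-tilde-B-alpha-nabla-tilde-1}. Set $q=\4u-\4v$; then $q$ solves \eqref{eq-difference-between-tilde-u-and-tilde-v} with $\4a$ given by \eqref{eq-mean-value-thm-coeffi-a-1} and satisfying \eqref{eq-mean-value-thm-coeffi-a-bound-lower-upper}, so the equation is uniformly parabolic on compact subsets of $\R^N\times(-\log T,\infty)$, and by Lemma~\ref{lem-initial-L-tilde-B-to-L-tilde-B} one has $|q|\4B^\alpha\in L^1(\R^N\times[-\log T,s])$ for every $s$. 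Since $q(\cdot,-\log T)\not\equiv0$ we may assume, relabelling $\4u,\4v$ if necessary, that $(q(\cdot,-\log T))_+\not\equiv0$. Exactly as in Lemma~\ref{lem-first-lemma-for-aymptotic-case}, for each $R>0$ I would introduce the solutions $q_1^R,q_2^R$ of \eqref{eq-difference-between-tilde-u-and-tilde-v} on $B_R(0)\times(-\log T,\infty)$ with initial values $q_{\pm}(\cdot,-\log T)$ and lateral boundary values $q_{\pm}$, so that $q=q_1^R-q_2^R$, together with $\2q_1^R,\2q_2^R$, the corresponding solutions with zero lateral boundary value, which satisfy $0\le\2q_j^R\le q_j^R$, increase in $R$, and converge to solutions $\2q_1,\2q_2$.

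Fix $s>-\log T$ and let $\eta_{R'}$ be as in the proof of Lemma~\ref{lem-preliminary-esti-first-1}. Writing $|q|=q_1^R+q_2^R-2\min(q_1^R,q_2^R)$, pairing the equations for $q_1^R$ and $q_2^R$ with $\4B^\alpha\eta_{R'}$ for $2R'\le R$, integrating by parts, and using \eqref{eq-direct-computation-La-tilde-B-alpha-1}, \eqref{eq-mean-value-thm-coeffi-a-bound-lower-upper} and \eqref{eq-direct-computation-La-tilde-B-alpha-nabla-tilde-1} for the part carrying $\eta_{R'}$ and \eqref{rescaled-tilde-B-bound} for the part carrying $\nabla\eta_{R'},\La\eta_{R'}$, one arrives (after integration in time, using $(q_1^R+q_2^R)(\cdot,-\log T)=|q(\cdot,-\log T)|$) at
\begin{align*}
\int_{\R^N}|q(\cdot,s)|\4B^\alpha\eta_{R'}\,dx
\le\ &\int_{\R^N}|q(\cdot,-\log T)|\4B^\alpha\eta_{R'}\,dx
-c\int_{-\log T}^{s}\!\int_{\R^N}\frac{\4B^\alpha|q|}{k_2+|x|^2}\eta_{R'}\,dx\,d\tau\\
&-2\int_{\R^N}\min(q_1^R,q_2^R)(\cdot,s)\4B^\alpha\eta_{R'}\,dx+\mathcal E^{R,R'},
\end{align*}
where $c>0$ is the constant in \eqref{eq-direct-computation-La-tilde-B-alpha-nabla-tilde-1} and $\mathcal E^{R,R'}=\int_{-\log T}^{s}\int_{R'\le|x|\le2R'}(q_1^R+q_2^R)\,O(\4B^\alpha)\,dx\,d\tau$. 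As in Lemma~\ref{lem-first-lemma-for-aymptotic-case}, by a weighted analogue of Lemma~\ref{lem-L-1-integrabilities-of-p-456} (its proof carries over with $\eta_{R/4}$ replaced by $\4B^\alpha\eta_{R/4}$ and $H_k$ replaced by a barrier adapted to the drift $\frac1{N-2}x\cdot\nabla$, the sign \eqref{eq-direct-computation-La-tilde-B-alpha-nabla-tilde-1} being what closes the estimate) there is a sequence $R_i\to\infty$ along which $q_j^{R_i}\to\2q_j$ locally uniformly with $\sup_{[-\log T,s]}\int_{\R^N}\2q_j(\cdot,\tau)\4B^\alpha\,dx<\infty$, hence $\2q_j\4B^\alpha\in L^1(\R^N\times[-\log T,s])$. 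Passing to the limit $R=R_i\to\infty$ then replaces $q_j^R$ by $\2q_j$ throughout and leaves an error $\mathcal E^{R'}\lesssim\int_{-\log T}^{s}\int_{R'\le|x|\le2R'}(|q|+\2q_1+\2q_2)\4B^\alpha\,dx\,d\tau$, which tends to $0$ as $R'\to\infty$.

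It remains to extract a definite gain. Since $(q(\cdot,-\log T))_+\not\equiv0$, comparing with the barrier $\2q_1^{R_0}$ and applying the strong maximum principle on $B_{R_0/2}(0)\times(-\log T,s]$ gives $\min_{|x|\le R_0/2}\2q_1(\cdot,s)\ge c(s)>0$ once $R_0$ is large enough. If also $(q(\cdot,-\log T))_-\not\equiv0$, the same holds for $\2q_2$, so the $\min$-term above is $\le-2c(s)\int_{|x|\le R_0/2}\4B^\alpha\,dx<0$; if instead $(q(\cdot,-\log T))_-\equiv0$, then $\4u\ge\4v$, $|q|=q$, $\min(q_1^R,q_2^R)\equiv0$, but $q>0$ on $\overline{B_{R_0}(0)}\times[\tau_0,s]$ with $\tau_0:=\tfrac12(s-\log T)$ by the strong maximum principle, so the dissipation term is $\le-c\int_{\tau_0}^{s}\int_{|x|\le R_0}\frac{\4B^\alpha q}{k_2+|x|^2}\,dx\,d\tau<0$. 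In either case the total gain is $\le-2C(s)$ for some $C(s)>0$. Enlarging $R_0$ so that $\mathcal E^{R'}\le\tfrac12C(s)$ for every $R'\ge R_0$ (possible since the relevant integrand lies in $L^1(\R^N\times[-\log T,s])$), we take $R'=R\ge R_0$ (so $\eta_R\equiv1$ on $B_{R_0}(0)$) and obtain $\int_{\R^N}|q(\cdot,s)|\4B^\alpha\eta_R\,dx\le\int_{\R^N}|q(\cdot,-\log T)|\4B^\alpha\eta_R\,dx-\tfrac32C(s)$, which is \eqref{eq-tilde-u-tilde-v-strong-decreasing-12}.

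The step I expect to be the main obstacle is the control of the annular error $\mathcal E^{R,R'}$. On $R'\le|x|\le2R'$ the coefficient multiplying $q_j^R$ there is of size $\4B^\alpha\sim{R'}^{-2\alpha}$, while the best a priori pointwise bound on $q_j^R$ decays only like ${R'}^{-4}$, so a direct estimate gives $\mathcal E^{R,R'}={R'}^{\,N-2\alpha-4}=O(1)$ rather than $o(1)$ (recall $2\alpha=N-4$); this borderline behaviour is exactly why the cut-off $\eta_R$ cannot be removed and must be retained in the statement. Turning this $O(1)$ bound into a genuinely small quantity forces one to pass to the limit $R\to\infty$ first and then to exploit the finiteness of the weighted space–time integrals of $\2q_1,\2q_2,|q|$; establishing the weighted counterpart of Lemma~\ref{lem-L-1-integrabilities-of-p-456} needed for this — in particular producing a barrier in place of $H_k$ that absorbs the confining drift — is the other technical point requiring care.
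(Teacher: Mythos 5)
You have correctly identified the engine of the paper's proof, namely the strict negativity of the zeroth-order term $\4{a}\La\4{B}^{\alpha}-\frac{1}{N-2}x\cdot\nabla\4{B}^{\alpha}\le -\frac{k_2(N-4)N}{2(N-2)(k_2+|x|^2)}\4{B}^{\alpha}$ from \eqref{eq-direct-computation-La-tilde-B-alpha-nabla-tilde-1}, and also the way the borderline $O(1)$ annular terms are beaten: not by pointwise decay but by the finiteness of $\int_{-\log T}^{s}\int_{\R^N}|q|\4{B}^{\alpha}\,dx\,d\tau$ supplied by Lemma \ref{lem-initial-L-tilde-B-to-L-tilde-B}, which forces the integrals over $\{R\le|x|\le 2R\}$ to vanish as $R\to\infty$. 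However, the paper does not transplant the Osher--Ralston/Hsu decomposition of Section \ref{section-The-integrable-case-(N=3)} into the weighted setting. It works with $|q|$ directly: Kato's inequality gives the differential inequality \eqref{eq-Kato-inequality-2}, integration in time gives \eqref{diff-ineqn} with four terms $I_{1,R},\dots,I_{4,R}$, the first three are annular terms comparable to $\int_{-\log T}^s\int_{R\le|x|\le 2R}|q|\4{B}^{\alpha}$ and hence tend to $0$, and the entire gain $-C(s)$ comes from $I_{4,R}$ alone. No auxiliary solutions $q_1^R,q_2^R$, no dual problem, no barrier, and no $\min$-term are needed; \eqref{tidle-u0-v0-eqn} is used only to guarantee $I_{4,R}<-C(s)$ for $R$ large.

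The genuine gap in your route is the ``weighted analogue of Lemma \ref{lem-L-1-integrabilities-of-p-456}'' that you invoke but do not prove. You need it because your error term $\mathcal{E}^{R,R'}$ carries $q_1^R+q_2^R=|q|+2\min(q_1^R,q_2^R)$ rather than $|q|$, so Lemma \ref{lem-initial-L-tilde-B-to-L-tilde-B} alone does not control it: you must separately show $\int_{-\log T}^{s}\int_{\R^N}\2{q}_j\4{B}^{\alpha}\,dx\,d\tau<\infty$. A crude bound $\2{q}_j\le\4{B}_{k_2}-\4{B}_{k_1}\sim|x|^{-4}$ gives $\2{q}_j\4{B}^{\alpha}\sim|x|^{-N}$, which is exactly non-integrable, so the claim really does require reworking the dual-problem/barrier argument of Lemma \ref{lem-L-1-integrabilities-of-p-456} with the terminal datum $h$ replaced by something comparable to $\4{B}^{\alpha}\eta_{R/4}$; you would then need $\psi_R(\cdot,0)\lesssim\4{B}^{\alpha}$, i.e.\ a supersolution property of $C\4{B}^{\alpha}$ for the adjoint operator $L$, which is an additional computation you have not carried out. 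None of this is needed: since the strict decrease is already produced by the dissipation term (your ``second case''), the whole decomposition, and with it the unproven lemma, can simply be deleted. Separately, your reduction of the first case to ``$\4{u}\ge\4{v}$ when $(q_0)_-\equiv0$'' tacitly uses a comparison principle on all of $\R^N$ that is not established in the paper; the paper's formulation of $I_{4,R}$ avoids this as well by keeping $|q|$ throughout.
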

\begin{proof}
We will use a modification of the proof of Lemma 4.1 of \cite{DS1} to 
prove the lemma. Let $\eta_R\in C^{\infty}_0(\R^N)$ be
as in the proof of Lemma \ref{lem-preliminary-esti-first-1}. Let $q=\4{u}-\4{v}$ and $\4{a}(x,s)$ be given by 
\eqref{eq-mean-value-thm-coeffi-a-1}. 
By the proof of Lemma \ref{lem-initial-L-tilde-B-to-L-tilde-B},
\eqref{eq-Kato-inequality-2} holds. Integrating 
\eqref{eq-Kato-inequality-2},
\begin{align*}
&\int_{\R^N}|q(x,s)|\4{B}^{\alpha}(x)\eta_R(x)\,dx-\int_{\R^N}|\4{u}_0-\4{v}_0|
\4{B}^{\alpha}(x)\eta_R(x)\,dx\nonumber\\
\le&\int_{-\log T}^{s}\int_{\R^N}\4{a}(x,\tau)|q|(x,\tau)
\left(\4{B}^{\alpha}\La\eta_R+\eta_R\La\4{B}^{\alpha}+2\nabla\eta_R\cdot
\nabla\4{B}^{\alpha}\right)\,dx\,d\tau\nonumber\\
&\qquad -\frac{1}{N-2}\int_{-\log T}^{s}\int_{\R^N}|q|(x,\tau)x
\cdot\left(\4{B}^{\alpha}\nabla\eta_R+\eta_R\nabla\4{B}^{\alpha}\right)
\,dx\,d\tau.
\end{align*}
Hence 
\begin{align}
&\int_{\R^N}|q(x,s)|\4{B}^{\alpha}(x)\eta_R(x)\,dx-\int_{\R^N}|\4{u}_0-\4{v}_0|\4{B}^{\alpha}(x)\eta_R(x)\,dx\nonumber\\
\le&\frac{C}{R^2}\int_{-\log T}^s\int_{R\leq |x|\leq 2R}
\4{a}(x,\tau)|q|(x,\tau)\4{B}^{\alpha}(x)\,dx\,d\tau\nonumber\\
&\qquad+\frac{C}{R}\int_{-\log T}^s\int_{R\leq |x|\leq 2R}
\4{a}(x,\tau)|q|(x,\tau)\left|\nabla\4{B}^{\alpha}(x)\right|\,dx\,d\tau\nonumber\\
&\qquad+C\int_{-\log T}^s\int_{R\leq |x|\leq 2R}|q|(x,\tau)
\4{B}^{\alpha}(x)\,dx\,d\tau\nonumber\\
&\qquad+\int_{-\log T}^s\int_{\R}\left\{a(x,\tau)\La
\4{B}^{\alpha}-\frac{1}{N-2}x\cdot\nabla\4{B}^{\alpha}\right\}|q|(x,\tau)\eta_R(x)\,dx\,d\tau\nonumber\\
=&I_{1,R}+I_{2,R}+I_{3,R}+I_{4R}\qquad\qquad\forall R>0,
s>-\log T.\label{diff-ineqn} 
\end{align}
Now by \eqref{rescaled-tilde-B-bound},
\begin{equation}\label{nable-B-bound}
|\nabla \4{B}^{\alpha}|\le CR^{-1}\4{B}^{\alpha}\quad\forall 
R\le |x|\le 2R, R>1
\end{equation}
for some constant $C>0$.
Then by \eqref{eq-mean-value-thm-coeffi-a-bound-lower-upper}
and \eqref{nable-B-bound},
\begin{equation}\label{I-comparison}
0\le I_{2,R}\le CI_{1,R}\le C'I_{3,R}\quad\forall R>0.
\end{equation} 
Since by Lemma \ref{lem-initial-L-tilde-B-to-L-tilde-B},
$$
\int_{-\log T}^{s}\int_{\R^N}|q|(x,\tau)\4{B}^{\alpha}(x)\,dx\,d\tau<\infty
\quad\forall s>-\log T,
$$
we have
\begin{equation}\label{I3-bound-2}
\lim_{R\to\infty}I_{3,R}=\lim_{R\to\infty}\int_{-\log T}^s
\int_{R\leq |x|\leq 2R}|q|(x,\tau)\4{B}^{\alpha}(x)\,dxd\tau=0.
\end{equation}
By \eqref{I-comparison} and \eqref{I3-bound-2},
\begin{equation}\label{I12-bound}
\lim_{R\to\infty}I_{1,R}=\lim_{R\to\infty}I_{2,R}=0.
\end{equation}
By \eqref{eq-direct-computation-La-tilde-B-alpha-nabla-tilde-1} and
\eqref{tidle-u0-v0-eqn} for any 
$s>-\log T$ there exist constants $C(s)>0$ and $R_1>1$ such that
\begin{equation}\label{I14-bound}
I_{4,R}<-C(s)\quad\forall R\ge R_1.
\end{equation}
By \eqref{diff-ineqn}, \eqref{I3-bound-2}, \eqref{I12-bound} and 
\eqref{I14-bound}, for any $s>-\log T$ there exists a constant $R_0>R_1$ 
such that \eqref{eq-tilde-u-tilde-v-strong-decreasing-12} holds
and the lemma follows.
\end{proof}

By Lemma \ref{lem-weighted-L-1-contraction-1234} and an argument similar 
to the proof of Lemma 1 of Osher and Ralston \cite{OR} but with the
$L^1$ norm there being replaced by the $L^1(\4{B}^{\alpha},\R^N)$ norm
we have the following result.

\begin{lem}(cf. Lemma 1 of \cite{OR})\label{Osher-and-Ralston-Lemma-1}
Let $N\ge 5$, $\alpha=(N-4)/2$, and $\4{B}_{k_0}$ be the rescaled Barenblatt
solution. Suppose $\|\4{u}(\cdot,s_i)-\4{w}_0\|_{L^1(\4{B}^{\alpha},\R^N)}
\to 0$ as $i\to\infty$. If $\4{w}$ is a solution 
of \eqref{eq-rescaled-main-equation-of-v} in $\R^N\times[0,\infty)$ 
with initial value $\4{w}(x,0)=\4{w}_0(x)$, then
\begin{equation*}
\|\4{w}(\cdot,s)-\4{B}_{k_0}\|_{L^1(\4{B}^{\alpha},\R^N)}
=\|\4{w}_0-\4{B}_{k_0}\|_{L^1(\4{B}^{\alpha},\R^N)}\quad\forall s>0.
\end{equation*}
\end{lem}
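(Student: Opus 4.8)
The plan is to mimic the structure of Osher--Ralston's classical argument \cite{OR}, replacing the flat $L^1$ norm with the weighted norm $L^1(\4{B}^{\alpha},\R^N)$, $\alpha=(N-4)/2$. First I would record the key monotonicity input already available: by Lemma \ref{lem-weighted-L-1-contraction-1234}, whenever $\4w_0$ is a solution at some time and $\4w_0\not\equiv\4B_{k_0}$ (so that $\max_{\R^N}|\4w_0-\4B_{k_0}|\ne 0$), the function
$$
s\longmapsto \|\4w(\cdot,s)-\4B_{k_0}\|_{L^1(\4B^{\alpha},\R^N)}
$$
is \emph{strictly} decreasing, since $\4B_{k_0}$ is a stationary solution of \eqref{eq-rescaled-main-equation-of-v} and the difference $q=\4w-\4B_{k_0}$ again solves \eqref{eq-rescaled-main-equation-of-v} with coefficient $\4a$ as in \eqref{eq-mean-value-thm-coeffi-a-1} satisfying \eqref{eq-mean-value-thm-coeffi-a-bound-lower-upper}. (One must check the hypotheses of Lemma \ref{lem-weighted-L-1-contraction-1234} apply: $\4w$ satisfies \eqref{eq-initial-tilde-u-trapped-by-barenblat} because it arises as a limit of the rescaled solution, and $\4w_0-\4B_{k_0}\in L^1(\4B^{\alpha},\R^N)$ by Lemma \ref{lem-initial-L-tilde-B-to-L-tilde-B}.) If instead $\4w_0\equiv\4B_{k_0}$ then the norm is identically zero, so the claimed identity is trivial.

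Next I would combine this strict-decrease dichotomy with the hypothesis that $\|\4u(\cdot,s_i)-\4w_0\|_{L^1(\4B^{\alpha},\R^N)}\to 0$ along some sequence $s_i\to\infty$. The idea is that the function $E(s):=\|\4u(\cdot,s)-\4B_{k_0}\|_{L^1(\4B^{\alpha},\R^N)}$ is (at least eventually) nonincreasing in $s$ and hence converges to a limit $\ell\ge 0$ as $s\to\infty$. By weighted $L^1$-continuity of the flow on $[0,\infty)$ together with the convergence $\4u(\cdot,s_i)\to\4w_0$, one obtains for each fixed $s\ge 0$ that $\4u(\cdot,s_i+s)\to\4w(\cdot,s)$ in $L^1(\4B^{\alpha},\R^N)$ (using Lemma \ref{lem-convergence-on-compact-subset-of-R-N=4} for the uniform-on-compacts convergence and the uniform decay from \eqref{eq-initial-tilde-u-trapped-by-barenblat} plus $\4u_0-\4B_{k_0}\in L^1(\4B^\alpha,\R^N)$ to upgrade it to weighted-$L^1$ convergence via dominated convergence). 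Therefore
$$
\|\4w(\cdot,s)-\4B_{k_0}\|_{L^1(\4B^{\alpha},\R^N)}=\lim_{i\to\infty}\|\4u(\cdot,s_i+s)-\4B_{k_0}\|_{L^1(\4B^{\alpha},\R^N)}=\lim_{i\to\infty}E(s_i+s)=\ell
$$
for every $s\ge 0$, i.e.\ $s\mapsto\|\4w(\cdot,s)-\4B_{k_0}\|_{L^1(\4B^{\alpha},\R^N)}$ is constant $\equiv\ell=\|\4w_0-\4B_{k_0}\|_{L^1(\4B^{\alpha},\R^N)}$, which is exactly the assertion.

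The crux is the passage from convergence along the single sequence $s_i$ to constancy of the limit profile's distance, and for that the strict-decrease statement of Lemma \ref{lem-weighted-L-1-contraction-1234} plays the decisive role: it forces that $\4w$ cannot be any nonstationary solution with $\4w_0\ne\4B_{k_0}$ \emph{unless} its distance-to-$\4B_{k_0}$ is already flat, and flatness is precisely what the limiting argument above delivers. The main obstacle I anticipate is technical rather than conceptual: justifying that the weighted $L^1$ distance $E(s)$ is genuinely monotone (not merely that differences of \emph{two} solutions decrease) and genuinely converges, and that one may pass to the limit $i\to\infty$ inside the weighted norm. Both points are handled by noting that $q=\4u-\4B_{k_0}$ is itself a solution of \eqref{eq-rescaled-main-equation-of-v}, applying Lemma \ref{lem-initial-L-tilde-B-to-L-tilde-B} to get the a priori bound $\int_{\R^N}|\4u-\4B_{k_0}|\4B^{\alpha}\,dx\le\int_{\R^N}|\4u_0-\4B_{k_0}|\4B^{\alpha}\,dx+Cs$ together with Lemma \ref{lem-weighted-L-1-contraction-1234} for the monotonicity on each interval, and using the pointwise bound $|\4u-\4B_{k_0}|\le\4B_{k_2}-\4B_{k_1}\in L^1(\4B^\alpha,\R^N)$ for the domination needed in the limit. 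With these in hand the Osher--Ralston scheme goes through verbatim in the weighted setting.
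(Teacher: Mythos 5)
Your overall strategy is exactly the one the paper intends: the paper's ``proof'' of this lemma is a one--line reduction to Lemma \ref{lem-weighted-L-1-contraction-1234} plus the Osher--Ralston scheme with $L^1$ replaced by $L^1(\4{B}^{\alpha},\R^N)$, and your reconstruction of that scheme (monotone limit $\ell$ of $s\mapsto\|\4{u}(\cdot,s)-\4{B}_{k_0}\|_{L^1(\4{B}^{\alpha})}$, convergence $\4{u}(\cdot,s_i+s)\to\4{w}(\cdot,s)$ in the weighted norm, hence the distance of $\4{w}(\cdot,s)$ to $\4{B}_{k_0}$ equals $\ell$ for every $s$) is the right one.

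There is, however, one concrete false claim in your write-up: you assert $\4{B}_{k_2}-\4{B}_{k_1}\in L^1(\4{B}^{\alpha},\R^N)$ and propose to use it as a dominating function to upgrade the locally uniform convergence of Lemma \ref{lem-convergence-on-compact-subset-of-R-N=4} to weighted $L^1$ convergence. This is not true: $(\4{B}_{k_2}-\4{B}_{k_1})(x)\sim C|x|^{-4}$ and $\4{B}^{\alpha}(x)\sim C|x|^{-(N-4)}$ as $|x|\to\infty$, so the product decays like $|x|^{-N}$ and its integral over $\R^N$ diverges logarithmically. (This is precisely why the paper needs the separate, nontrivial Lemma \ref{lem-claim-4-4-in-DS}, proved via the argument of Claim 4.4 of \cite{DS1}, rather than a one-line dominated convergence argument.) The gap is harmless for the present lemma only because your other, primary justification of the same step is correct: the non-strict weighted contraction
$\int_{\R^N}|\4{u}(\cdot,s_i+s)-\4{w}(\cdot,s)|\4{B}^{\alpha}\,dx\le\int_{\R^N}|\4{u}(\cdot,s_i)-\4{w}_0|\4{B}^{\alpha}\,dx\to 0$,
which follows by letting $R\to\infty$ in \eqref{diff-ineqn} using \eqref{I3-bound-2}, \eqref{I12-bound} and $I_{4,R}\le 0$ (and which requires checking, as you do, that $\4{w}$ satisfies \eqref{eq-initial-tilde-u-trapped-by-barenblat} and that $\4{u}(\cdot,s_i)-\4{w}_0\in L^1(\4{B}^{\alpha},\R^N)$ for large $i$). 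You should delete the dominated-convergence parenthetical and rely solely on the contraction; with that correction the proof is complete and coincides with the paper's intended argument.
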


By an argument similar to the proof of Claim 4.4 of \cite{DS1} but
with Lemma \ref{lem-weighted-L-1-contraction-1234} and 
Corollary \ref{cor-tilde-u-L-1-contraction-987} replacing Lemma 4.1 
and Corollary 2.2 in the proof there we have the following result.

\begin{lem}\label{lem-claim-4-4-in-DS}
Let $N\ge 5$ and let $u_0$, $u$, $\4{u}$, $\4{u}_i$, $\4{u}_{i_k}$ and $\4{w}$ 
be as in Lemma \ref{lem-convergence-on-compact-subset-of-R-N=4}. 
Then the sequence $\4{u}_{i_k}(x,s)$ converges to 
$\4{w}(x,s)$ in $L^1(\4{B}^{\alpha},\R^N)$-norm as $k\to\infty$.
\end{lem}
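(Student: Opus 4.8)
The plan is to follow the scheme of Claim~4.4 of \cite{DS1}, transplanting everything to the weighted space $L^1(\4{B}^{\alpha},\R^N)$ with $\4{B}=\4{B}_{k_2}$ and $\alpha=(N-4)/2$. Recall from Lemma~\ref{lem-convergence-on-compact-subset-of-R-N=4} that a subsequence $\4{u}_{i_k}(\cdot,s)=\4{u}(\cdot,s_{i_k}+s)$ converges to $\4{w}(\cdot,s)$ uniformly on compact subsets of $\R^N\times(-\infty,\infty)$, and that all these functions are trapped between $\4{B}_{k_1}$ and $\4{B}_{k_2}$. Since convergence in $L^1(\4{B}^{\alpha},\R^N)$ on a fixed bounded ball $B_{R_0}(0)$ follows immediately from uniform convergence there together with the bound $\4{B}^{\alpha}\le C$ on $B_{R_0}(0)$, the whole issue is to control the weighted mass outside a large ball \emph{uniformly in $k$}, i.e.\ to prove the tightness estimate
\begin{equation*}
\limsup_{R\to\infty}\,\sup_{k}\int_{|x|\ge R}\bigl|\4{u}_{i_k}(x,s)-\4{w}(x,s)\bigr|\4{B}^{\alpha}(x)\,dx=0.
\end{equation*}

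The key device is the monotone decay of the weighted mass of differences, which is what Lemma~\ref{lem-weighted-L-1-contraction-1234} provides. First I would show, using \eqref{u0-B-in-l1} together with \eqref{eq-initial-u-0-trapped-by-barenblat}, that $\4{u}_0-\4{B}_{k_0}\in L^1(\4{B}^{\alpha},\R^N)$; indeed $f=u_0-B_{k_0}\in L^1(\R^N)$ and $\4{B}^{\alpha}$ is bounded, so the rescaled difference lies in the weighted space, and consequently $\4{u}(\cdot,s)-\4{B}_{k_0}\in L^1(\4{B}^{\alpha},\R^N)$ for each $s$ by Lemma~\ref{lem-initial-L-tilde-B-to-L-tilde-B}. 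Then I would apply Lemma~\ref{lem-weighted-L-1-contraction-1234} with $\4{v}\equiv\4{B}_{k_0}$ a stationary solution: letting $R\to\infty$ in \eqref{eq-tilde-u-tilde-v-strong-decreasing-12} gives that
$$
s\longmapsto\int_{\R^N}|\4{u}(x,s)-\4{B}_{k_0}(x)|\4{B}^{\alpha}(x)\,dx
$$
is (strictly) decreasing, hence bounded and convergent as $s\to\infty$. Applying the same lemma with $\4{v}\equiv\4{B}_{k_0}$ replaced by a shifted copy of $\4{u}$ itself, for $s'>s$ one gets
$$
\int_{\R^N}|\4{u}(x,s')-\4{u}(x,s)|\4{B}^{\alpha}(x)\,dx
$$
nonincreasing in the terminal variable; this is the weighted analogue of the contraction used in \cite{DS1}. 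Combined with Corollary~\ref{cor-tilde-u-L-1-contraction-987} (to handle the integrable part $f$) these give uniform-in-$k$ control: for $k,\ell$ large, $\|\4{u}_{i_k}(\cdot,s)-\4{u}_{i_\ell}(\cdot,s)\|_{L^1(\4{B}^{\alpha})}\le\|\4{u}(\cdot,s_{i_k})-\4{u}(\cdot,s_{i_\ell})\|_{L^1(\4{B}^{\alpha})}$, which is small since the monotone quantity is Cauchy, so $\{\4{u}_{i_k}(\cdot,s)\}$ is Cauchy in $L^1(\4{B}^{\alpha},\R^N)$ and its limit must be $\4{w}(\cdot,s)$ by uniqueness of the compact-set limit.

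Concretely the argument runs: fix $s$; given $\3>0$, choose $k_0$ so large that for all $k,\ell\ge k_0$ the monotonicity just described forces $\|\4{u}_{i_k}(\cdot,s)-\4{u}_{i_\ell}(\cdot,s)\|_{L^1(\4{B}^{\alpha})}<\3$; by the Cauchy criterion $\4{u}_{i_k}(\cdot,s)$ converges in $L^1(\4{B}^{\alpha},\R^N)$ to some limit $\4{z}(\cdot,s)$; since a subsequence converges a.e.\ (and in fact uniformly on compacta) to $\4{w}(\cdot,s)$, we have $\4{z}=\4{w}$, which is the assertion. I would also remark that the convergence is locally uniform in $s$ because all the estimates are uniform on compact $s$-intervals.

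The main obstacle is verifying the tightness/monotonicity step rigorously: Lemma~\ref{lem-weighted-L-1-contraction-1234} is stated with the cutoff $\eta_R$ still present and with an additive defect $C(s)$, so one must justify passing to the limit $R\to\infty$ and, more delicately, use it along the \emph{translated} solutions $\4{u}(\cdot,s_{i_k}+\cdot)$ — whose initial data at time $-\log T$ are not literally of the form \eqref{u0-B-in-l1} but whose differences still lie in $L^1(\4{B}^{\alpha},\R^N)$ by Lemma~\ref{lem-initial-L-tilde-B-to-L-tilde-B}. One checks that the hypothesis $\4{u}_0-\4{v}_0\in L^1(\4{B}^{\alpha},\R^N)$ of Lemma~\ref{lem-weighted-L-1-contraction-1234} is exactly what is available here, so the contraction applies to each pair $\4{u}_{i_k},\4{u}_{i_\ell}$, and the Cauchy argument closes. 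Everything else is the bookkeeping already carried out in Claim~4.4 of \cite{DS1}, with $\4{B}^{\alpha}\,dx$ replacing $dx$ throughout.
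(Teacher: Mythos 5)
Your overall architecture — locally uniform convergence on compact sets plus a uniform-in-$k$ control of the weighted mass in the tails — is exactly the architecture of Claim 4.4 of \cite{DS1} that the paper invokes, and your preliminary reductions (that $\4{u}_0-\4{B}_{k_0}\in L^1(\4{B}^{\alpha},\R^N)$, that the contraction applies to the translated solutions) are fine. The gap is in the step that is supposed to deliver the tail estimate. You assert that $\{\4{u}(\cdot,s_{i_k})\}_k$ is Cauchy in $L^1(\4{B}^{\alpha},\R^N)$ ``since the monotone quantity is Cauchy''. The monotone quantity is the scalar $m(s)=\|\4{u}(\cdot,s)-\4{B}_{k_0}\|_{L^1(\4{B}^{\alpha})}$; its convergence gives $|m(s_{i_k})-m(s_{i_\ell})|\to 0$, but the triangle inequality only yields $|m(s_{i_k})-m(s_{i_\ell})|\le\|\4{u}(\cdot,s_{i_k})-\4{u}(\cdot,s_{i_\ell})\|_{L^1(\4{B}^{\alpha})}$ — the wrong direction. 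A sequence whose distances to a fixed point converge need not be Cauchy. Indeed, Cauchyness of $\4{u}(\cdot,s)$ in $L^1(\4{B}^{\alpha},\R^N)$ as $s\to\infty$ would give convergence of the full flow with no subsequence extraction at all, i.e.\ essentially the conclusion of Theorem~\ref{lem-main-theorem-MS}; it cannot be expected to follow from soft monotonicity of one scalar. Note also that the crude pointwise bound $|\4{u}_{i_k}-\4{w}|\le\4{B}_{k_2}-\4{B}_{k_1}\sim C|x|^{-4}$ is of no help, since $|x|^{-4}\4{B}^{\alpha}\sim|x|^{-N}$ fails to be integrable at infinity, so a genuine tightness argument really is required.

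The ingredient that actually closes the tails — and the reason Corollary~\ref{cor-tilde-u-L-1-contraction-987} is listed as one of the two substituted lemmas — is the \emph{unweighted} $L^1$-contraction. Since $u_0-B_{k_0}=f\in L^1(\R^N)$ and $u,\,B_{k_0}\ge B_{k_1}$, Corollary~\ref{cor-tilde-u-L-1-contraction-987} applied with $v=B_{k_0}$ gives $\int_{\R^N}|\4{u}(x,\sigma)-\4{B}_{k_0}(x)|\,dx\le\|f\|_{L^1(\R^N)}$ for every $\sigma$. Because $\4{B}^{\alpha}(x)\le CR^{-(N-4)}$ for $|x|\ge R$ (this is where $N\ge5$ enters), one obtains
\begin{equation*}
\int_{|x|\ge R}|\4{u}_{i_k}(x,s)-\4{B}_{k_0}(x)|\4{B}^{\alpha}(x)\,dx\le CR^{-(N-4)}\|f\|_{L^1(\R^N)}
\end{equation*}
uniformly in $k$ and $s$, and Fatou's lemma gives the same bound with $\4{u}_{i_k}$ replaced by $\4{w}$. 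Splitting $\int_{\R^N}|\4{u}_{i_k}-\4{w}|\4{B}^{\alpha}$ into $B_R(0)$ and its complement, using uniform convergence on $B_R(0)$ and then letting $R\to\infty$, yields the claim. If you replace your Cauchy step by this tail estimate, the rest of your write-up goes through.
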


Then by the same argument as the proof of Theorem 1.2 of \cite{DS1} on 
P.110 but with Lemma 4.2, Lemma 4.3, and Claim 4.4 there being replaced 
by Lemma \ref{Osher-and-Ralston-Lemma-1}, 
Lemma \ref{lem-convergence-on-compact-subset-of-R-N=4}
and Lemma \ref{lem-claim-4-4-in-DS}, we get 
Theorem \ref{lem-main-theorem-MS}. This completes the proof of 
Theorem \ref{lem-main-theorem-MS}.

\section{Improvement}
\setcounter{equation}{0}
\setcounter{thm}{0}

In this section we will improve Theorem~\ref{lem-main-theorem-MS} 
by removing the assumption $u_0\ge B_{k_1}(x,0)$ where $B_{k_1}(x,t)$ 
is given by \eqref{eq-self-similar-barenblett-solution} for some $T>0$. 
Let $T>0$ and $k_0>0$ be fixed constants. 
Denoting by 
\begin{equation*}
B_{k_0}(x,t)=\frac{2(N-2)(T-t)_+^{\frac{N}{N-2}}}{k_0+(T-t)_+^{\frac{2}{N-2}}|x|^2},
\end{equation*}
we will prove the following result.

\begin{thm}\label{thm-improvement-first-1}
Let $N\ge 5$. Suppose
\begin{equation}\label{eq-condition-of-improvement-0}
0\le u_0\le B_{k_0}(x,0)\quad\mbox{ in }\R^N
\end{equation} 
and 
\begin{equation}\label{eq-condition-of-improvement-1}
\left|u_0(x)-B_{k_0}(x,0)\right|\leq f(|x|)\in L^1(\R^N) 
\end{equation}
for some nonnegative radially symmetric function $f$. Then the maximal 
solution $u$ of \eqref{eq-cases-main-cauchy-problem} vanishes at 
the same time $T$ as $B_{k_0}(x,t)$ and the rescaled solution $\4{u}(x,s)$ 
given by \eqref{eq-rescaled-function} converges uniformly on $\R^N$ 
and in $L^1(\4{B}^{\frac{N-4}{2}},\R^N)$ as $s\to\infty$ 
to the rescaled Barenblatt solution $\4{B}_{k_0}$. 
\end{thm}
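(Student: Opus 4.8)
The plan is to reduce the general case to the situation already covered by Theorem~\ref{lem-main-theorem-MS} by trapping $u_0$ between two Barenblatt-type initial data and using the comparison/monotonicity structure of the equation together with the $L^1$-contraction principle. First I would construct, for each small $\3>0$, an initial datum $u_{0,\3}$ satisfying both $B_{k_0+\3}(x,0)\le u_{0,\3}(x)\le B_{k_0-\3}(x,0)$ for $|x|$ large (so that \eqref{eq-initial-u-0-trapped-by-barenblat} holds with $k_1=k_0+\3>k_2=k_0-\3$ after relabeling) and $u_{0,\3}=B_{k_0}+f_\3$ with $f_\3\in L^1(\R^N)$, and such that $\|u_{0,\3}-u_0\|_{L^1}\to 0$ as $\3\to 0$. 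The point of \eqref{eq-condition-of-improvement-0} and \eqref{eq-condition-of-improvement-1} is exactly that $u_0$ already behaves like $B_{k_0}(x,0)$ at infinity up to an $L^1$ error, so such an approximation is available: near infinity one may replace $u_0$ by $B_{k_0}$ itself (which costs only the tail of $f$ in $L^1$), and on a large ball one truncates from above and below by the appropriate Barenblatt profiles. For each such $u_{0,\3}$, let $u_\3$ be the maximal solution of \eqref{eq-cases-main-cauchy-problem}; by the result of \cite{Hu2} cited in the introduction $u_\3$ satisfies the two-sided bound \eqref{eq-initial-u-0-trapped-by-B} with constants $k_0\pm\3$, so Theorem~\ref{lem-main-theorem-MS} applies and the rescaled $\4{u}_\3(\cdot,s)$ converges uniformly on $\R^N$ and in $L^1(\4{B}^{(N-4)/2},\R^N)$ as $s\to\infty$ to the rescaled Barenblatt solution $\4{B}_{k_0}$ (note that the limiting $k$ is forced to be $k_0$ here because $u_{0,\3}-B_{k_0}=f_\3\in L^1$, consistent with the role of $k_0$ in Theorem~\ref{lem-main-theorem-MS}).

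Next I would pass to the limit $\3\to 0$. The first thing to check is that the maximal solution $u$ of \eqref{eq-cases-main-cauchy-problem} with datum $u_0$ vanishes exactly at time $T$: the upper bound $u_0\le B_{k_0}(x,0)$ gives, by comparison for the maximal solution, $u(x,t)\le B_{k_0}(x,t)$, so $u$ is extinct by time $T$; for the reverse, the lower approximants $u_{0,\3}\ge$ (a Barenblatt profile with parameter $k_0+\3$ near infinity, hence bounded below on compacta by a positive solution not extinct before $T$) together with the convergence $u_{0,\3}\to u_0$ in $L^1$ and a lower-semicontinuity/comparison argument show $u$ cannot vanish before $T$; combining, the extinction time is precisely $T$, so the rescaling \eqref{eq-rescaled-function} is well defined for $u$ on $\R^N\times(-\log T,\infty)$. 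Then I would use the $L^1$-contraction principle (the version in Lemma~\ref{eq-L-1-contraction-principle} applies once both solutions are bounded below by a common Barenblatt solution; since $u$ itself need not be bounded below, I would instead compare $u_\3$ with $u_{\3'}$ for $\3,\3'\to 0$, which are both bounded below by $B_{k_0+\max(\3,\3')}$, to get a Cauchy estimate $\|u_\3(\cdot,t)-u_{\3'}(\cdot,t)\|_{L^1}\le\|u_{0,\3}-u_{0,\3'}\|_{L^1}$) to show $u_\3\to u$ in $C([0,T'];L^1(\R^N))$ for every $T'<T$, hence $u$ is the $L^1$-limit of the $u_\3$ and inherits the Barenblatt upper bound; by Corollary~\ref{cor-tilde-u-L-1-contraction-987} the rescaled solutions also satisfy $\|\4{u}_\3(\cdot,s)-\4{u}_{\3'}(\cdot,s)\|_{L^1}\le\|u_{0,\3}-u_{0,\3'}\|_{L^1}$ uniformly in $s$.

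Finally, to get the conclusion for $\4{u}$ I would combine the uniform-in-$s$ $L^1$-Cauchy estimate on the rescaled solutions with the known convergence of each $\4{u}_\3(\cdot,s)\to\4{B}_{k_0}$ as $s\to\infty$, via a standard $3\varepsilon$-argument: given $\delta>0$, fix $\3$ with $\|u_{0,\3}-u_0\|_{L^1}<\delta$, so that $\|\4{u}(\cdot,s)-\4{u}_\3(\cdot,s)\|_{L^1(\R^N)}<\delta$ for all $s$ (after first establishing that $\4{u}_\3\to\4{u}$ for each fixed $s$ in the $L^1$ and in the weighted $L^1(\4{B}^{(N-4)/2})$ sense, using that the weight $\4{B}^{(N-4)/2}$ is bounded and that Lemma~\ref{lem-initial-L-tilde-B-to-L-tilde-B} controls the weighted norm); then choose $s$ large enough that $\|\4{u}_\3(\cdot,s)-\4{B}_{k_0}\|<\delta$. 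For the weighted $L^1$-convergence one argues similarly, noting that $u_{0,\3}-u_0\in L^1(\R^N)$ with bounded weight forces $\|\4{u}_\3(\cdot,s)-\4{u}(\cdot,s)\|_{L^1(\4{B}^{\alpha})}$ small uniformly in $s$ once one has the weighted contraction from the proof technique of Lemma~\ref{lem-initial-L-tilde-B-to-L-tilde-B} and Lemma~\ref{lem-weighted-L-1-contraction-1234}. Uniform convergence on $\R^N$ follows by combining the global $L^1$-convergence with the uniform-on-compacta convergence coming from the Schauder estimates as in Lemma~\ref{lem-convergence-on-compact-subset-of-R-N=4}, plus the common upper bound $\4{u}\le\4{B}_{k_0}$ that controls the tails.

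The main obstacle I anticipate is the first step made rigorous: showing that the maximal solution $u$ is genuinely approximated in $L^1$ (and in the rescaled weighted $L^1$, uniformly in $s$) by the trapped solutions $u_\3$, since $u$ itself lacks the lower Barenblatt bound that makes Lemma~\ref{eq-L-1-contraction-principle} directly applicable — one must run the contraction between pairs $u_\3,u_{\3'}$ and then identify the limit with the maximal solution $u$, which requires knowing both that the limit is a solution with the right initial data and that it dominates every other solution. Verifying the extinction time is exactly $T$ (the lower bound part) is the technical heart of that obstacle.
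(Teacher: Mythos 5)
There are two genuine gaps here, one in the approximation step and one at the end where you pass from $L^1$ information to uniform convergence.

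First, your approximating data cannot converge to $u_0$ in $L^1$ with the trapping constants you chose. You require $u_{0,\3}\ge B_{k_0+\3}(\cdot,0)$ everywhere (this is needed to invoke Theorem~\ref{lem-main-theorem-MS}), so the $L^1$ cost of the modification is at least $\int_{\R^N}\bigl(B_{k_0+\3}(x,0)-u_0(x)\bigr)_+\,dx$, which by monotone convergence tends to $\int_{\R^N}\bigl(B_{k_0}(x,0)-u_0(x)\bigr)_+\,dx$ as $\3\to0$; this is strictly positive whenever $u_0<B_{k_0}(\cdot,0)$ on a set of positive measure (e.g.\ $u_0=0$ on $B_1(0)$ and $u_0=B_{k_0}(\cdot,0)$ outside, which satisfies \eqref{eq-condition-of-improvement-0} and \eqref{eq-condition-of-improvement-1}). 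So $\|u_{0,\3}-u_0\|_{L^1}\not\to0$ and the $3\3$-argument collapses. The fix is to let the lower trapping constant go to infinity rather than to $k_0$: the paper takes $u_{0,k}=\max(u_0,B_k(\cdot,0))$ with $k\to\infty$, for which $(B_k-u_0)_+\le f$ and $(B_k-u_0)_+\to0$ pointwise, so dominated convergence gives $\|u_{0,k}-u_0\|_{L^1}\to0$; the resulting maximal solutions $u_k$ decrease to $u$, and the $L^1$-contraction between pairs then yields $\|u(\cdot,t)-B_{k_0}(\cdot,t)\|_{L^1}\le\|f\|_{L^1}$ and the extinction-time claim exactly as you sketch (this part of your plan is essentially the paper's Lemma~\ref{lem-L-1-contraction-in-improvement-6-2-0008} and the discussion following it).

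Second, and more seriously, even with the corrected approximation the $L^1$ Cauchy estimate cannot deliver uniform convergence on $\R^N$, and your appeal to ``Schauder estimates as in Lemma~\ref{lem-convergence-on-compact-subset-of-R-N=4}'' is circular: those estimates require the equation for $\4{u}$ to be uniformly parabolic on compact sets uniformly in $s$, i.e.\ a positive lower bound $\4{u}(x,s)\ge c(R)>0$ on $B_R(0)$ for all large $s$. Under \eqref{eq-condition-of-improvement-0} alone $u_0$ may vanish on open sets, so no such bound is available from the hypotheses, and it does not follow from $L^1$-closeness to the $\4{u}_k$ (which are bounded below only by $\4{B}_k\to0$). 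Recovering a two-sided Barenblatt-type bound on $\4{u}$ is the real content of the paper's argument: its Lemma~\ref{improve-tidle-u-bounded-above-below} uses the potential function $w_k=\int_l^t|\log u_k-\log B_{k_0}|\,d\tau$, the Newtonian-potential estimate $w_k\le C\|f\|_{L^1}/|x|^{N-2}$, and the Aronson--B\'enilan inequality to prove $C_1e^{-C_3e^s\|f\|_{L^1}}(1+r^2)^{-1}\le\4{u}\le C_2e^{C_3e^s\|f\|_{L^1}}(1+r^2)^{-1}$ for $r\ge r_0$, then extends the bound inward by the maximum principle. With that bound the coefficient $\4{a}$ of the equation for $\4{u}-\4{B}_{k_0}$ satisfies \eqref{eq-condition-of-Lemma-4-first-437} and the whole Section~4 machinery is rerun directly on $\4{u}$, rather than transferred from the approximants. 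This step has no counterpart in your proposal and is not dispensable.
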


We will first prove that condition \eqref{eq-condition-of-improvement-1} 
implies the $L^1$-contraction principle.

\begin{lem}\label{lem-L-1-contraction-in-improvement-6-2-0008}
Let $N\ge 3$ and $0\le u_0$ satisfy \eqref{eq-condition-of-improvement-1}
for some function $0\le f\in L^1(\R^N)$. Suppose $u$ 
is the maximal solution of \eqref{eq-cases-main-cauchy-problem} in 
$\R^N\times(0,T_0)$ for some $T_0>0$. Then
\begin{equation}\label{eq-L-1-contraction-of-improvement-123}
\int_{\R^N}\left|u(\cdot,t)-B_{k_0}(\cdot,t)\right|\,dx\leq \|f\|_{L^1(\R^N)} 
\qquad \forall 0<t<\min\left(T,T_0\right).
\end{equation}
\end{lem}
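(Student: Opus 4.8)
The plan is to adapt the potential-function argument of Lemma~\ref{lem-before-L-1-contraction-46} to the present setting, where the lower Barenblatt barrier is no longer available but the comparison is now made against the single fixed Barenblatt solution $B_{k_0}$ rather than against an arbitrary second solution. First I would recall that, since $u$ is the maximal solution with $0\le u_0\le B_{k_0}(\cdot,0)$, the comparison principle gives $0\le u(x,t)\le B_{k_0}(x,t)$ in $\R^N\times(0,\min(T,T_0))$; in particular $u$ vanishes no later than $T$, and on the relevant time interval both $u$ and $B_{k_0}$ are trapped below an explicit Barenblatt profile. This one-sided bound, together with the fact that $B_{k_0}$ itself is a solution of \eqref{eq-cases-main-cauchy-problem}, is what will make the Kato-inequality machinery go through.

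Next I would set $v=B_{k_0}$, $f=u_0-v_0$ with $|f|\le f(|x|)\in L^1$, and introduce the potential $w(x,t)=\int_0^t|(\log u-\log v)(x,s)|\,ds$. By the Kato inequality applied to \eqref{eq-cases-main-cauchy-problem} one obtains $\partial_t|u-v|\le\La|\log u-\log v|$, hence $\La w\ge -|f|$ in $\R^N$ for $0<t<\min(T,T_0)$, so that $w(\cdot,t)-Z$ is subharmonic, where $Z$ is the Newtonian potential of $|f|$. The key point, exactly as in Lemma~\ref{lem-before-L-1-contraction-46}, is to prove the growth control $\rho^{-N}\int_{B_\rho(x)}w(y,t)\,dy\to0$ as $\rho\to\infty$, which then forces $w(x,t)\le Z(x)$ via the mean-value property for subharmonic functions, first for $f\in L^1\cap L^\infty$ and then for general $f\in L^1$ by mollification. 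Once $w\le Z$ is known, multiplying \eqref{eq-E-by-H-1924} by the cutoff $\eta_R$, integrating in time, and using the bound $\int_{R\le|x|\le2R}Z(x)\,dx\le C'R^2\|f\|_{L^1}$ (the estimate \eqref{eq-estimate-J-R-005}, whose proof does not use any lower bound on the solutions) gives $\int_{\R^N}|u-v|(\cdot,t)\eta_R\,dx\le C\|f\|_{L^1}$ uniformly in $R$; letting $R\to\infty$ yields \eqref{eq-L-1-contraction-of-improvement-123}. (In fact the sharper constant $\|f\|_{L^1}$, rather than a generic $C\|f\|_{L^1}$, follows by the $L^1$-contraction refinement as in the passage from Lemma~\ref{lem-before-L-1-contraction-46} to Lemma~\ref{eq-L-1-contraction-principle}, since once $u-v\in L^1$ one may integrate \eqref{eq-E-by-H-1924} directly against a cutoff and send $R\to\infty$ using the vanishing of the Laplacian term.)

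The main obstacle is the growth estimate for $w$, because the argument in Lemma~\ref{lem-before-L-1-contraction-46} for controlling $\rho^{-N}\int_{B_\rho(x)}w$ crucially used the pointwise lower bound $u,v\ge B_k$ to estimate $(\log u-\log v)_\pm\le CB_k^{-1/2}(u-v)_\pm^{1/2}$ and hence to invoke Lemma~\ref{lem-preliminary-esti-first-1}. Here no such global lower bound holds for $u$. The remedy is to use the explicit comparison function: since $v=B_{k_0}$ is known exactly and $0\le u\le B_{k_0}$, on the region $|x|\ge r_1$ (away from a fixed ball depending on $\delta$) one has $\log u-\log v\le 0$ on $\{u<v\}$, while on $\{u>v\}$ the set is empty; more precisely one splits $\log u-\log v=(\log u-\log v)_+-(\log u-\log v)_-$ and controls the negative part by $(\log v-\log u)_-\le v^{-1}(v-u)\le B_{k_0}^{-1}(v-u)$ and $B_{k_0}(x,t)^{-1}\le C|x|^2/(T-t)$ for $|x|$ large, which is precisely \eqref{eq-condition-result-of-B--1}; the positive part vanishes identically on the far region because $u\le v$ there. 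On the fixed inner ball $B_{r_1}(0)$, where $u$ and $v=B_{k_0}$ are both bounded and $v$ is bounded below by a positive constant, $|\log u-\log v|$ is controlled in $L^1_{t,x}$ directly from $|u-v|\le 2B_{k_0}$ and the elementary inequality $|\log a-\log b|\le c^{-1}|a-b|$ for $a,b\le M$, $b\ge c$. With these two ingredients the Green-identity computation \eqref{eq-aligned-kato-applying-the-green-second-identity-0}--\eqref{eq-final-two-of-I-1-0972} carries over verbatim, giving $\rho^{-N}\int_{B_\rho(x)}w\to0$, and the rest of the proof proceeds as above.
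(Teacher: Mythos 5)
Your overall strategy---rerunning the potential-function argument of Lemma~\ref{lem-before-L-1-contraction-46} directly for the pair $(u,B_{k_0})$---founders on exactly the point you identify as the main obstacle, and the patch you propose does not work. With only the upper bound $u\le B_{k_0}$ in hand (which, incidentally, is not even a hypothesis of the lemma: \eqref{eq-condition-of-improvement-0} is assumed in Theorem~\ref{thm-improvement-first-1} but not here), the positive part $(\log u-\log B_{k_0})_+$ is indeed harmless, but the negative part is $\log(B_{k_0}/u)$ on $\{u<B_{k_0}\}$, and this quantity is controlled by a \emph{lower} bound on $u$, not by a lower bound on $B_{k_0}$. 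The inequality you invoke, $(\log v-\log u)\le v^{-1}(v-u)$, is backwards: from $\log x\le x-1$ one gets $\log(v/u)\le (v-u)/u$, while $(v-u)/v$ is a lower bound for $\log(v/u)$. Likewise your inner-ball estimate ``$|\log a-\log b|\le c^{-1}|a-b|$ for $a,b\le M$, $b\ge c$'' is false when $a\to 0$ with $b\ge c$ fixed. Since no quantitative positive lower bound on $u$ has been established at this stage (the solution is merely positive), the potential $w=\int_0^t|\log u-\log B_{k_0}|\,ds$ is not under control and the Green-identity computation does not ``carry over verbatim.''

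The paper's proof sidesteps this entirely by an approximation from above: for $k\ge k_0$ let $u_k$ be the maximal solution with initial data $u_{0,k}=\max(B_k(\cdot,0),u_0)$. By construction $u_k\ge B_k$, so the already-proved $L^1$-contraction principle (Lemma~\ref{eq-L-1-contraction-principle}) applies to the pair $(B_{k_0},u_k)$, both of which lie above $B_k$; since $B_k\le B_{k_0}$ one checks $|B_{k_0}(\cdot,0)-u_{0,k}|\le|B_{k_0}(\cdot,0)-u_0|\le f$, giving $\int|B_{k_0}-u_k|(\cdot,t)\,dx\le\|f\|_{L^1}$ uniformly in $k$. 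One then shows $u_k\downarrow u$ (using maximality, Schauder estimates and the identification of the limit's initial trace) and passes to the limit. If you want to salvage your route, you would first need to prove a lower Barenblatt-type bound for $u$ on the relevant time interval, which is essentially what Lemma~\ref{improve-tidle-u-bounded-above-below} later provides---but that lemma itself uses the present one, so you cannot appeal to it here.
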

\begin{proof}
For any $k\geq k_0$, let $u_k$ be the maximal solutions of 
\eqref{eq-cases-main-cauchy-problem} (cf.\cite{Hu2}) in 
$\R^N\times(0,T_k)$ with initial values
\begin{equation*}
u_{0,k}(x)=\max\left(B_k(x,0),u_0(x)\right), \qquad \forall k\geq k_0
\end{equation*}
where $T_k$ is the maximal time of extistence of the solution $u_k$. 
Since
\begin{equation*}
u_0(x)\leq u_{0,k'}(x)\leq u_{0,k}(x)
\le 2(N-2)k_0^{-1}T^{\frac{N}{N-2}}\quad \mbox{ in }\R^N\quad\forall 
k'\ge k\ge k_0
\end{equation*}
and $u$, $u_k$ are the maximal solutions of 
\eqref{eq-cases-main-cauchy-problem} with initial values $u_{0}$, $u_{0,k}$ 
respectively, by the result of \cite{Hu2}, 
\begin{equation}\label{eq-comparison-maximal-functions-u-u-k-B-k-0}
u(x,t)\leq u_{k'}(x,t)\leq u_{k}(x,t)\le 2(N-2)k_0^{-1}T^{\frac{N}{N-2}}
<\infty\quad\mbox{ in }\R^N\times(0,T_0)
\end{equation}
for any $k'\ge k\ge k_0$. Then $T_k\geq T_{k'}\geq T_0$ for all 
$k'\geq k\geq k_0$. Hence the equation \eqref{eq-cases-main-cauchy-problem} 
for the sequence $\{u_k\}_{k\geq k_0}$ is uniformly parabolic on any compact 
subset of $\R^N\times(0,T_0)$. By the Schauder estimates \cite{LSU}, 
$\{u_k\}_{k\geq k_0}$ is equi-H\"older continuous on any compact subset 
of $\R^N\times(0,T_0)$. Since the sequence of solution $\{u_k\}_{k\geq k_0}$ 
is decreasing as $k\to\infty$ and bounded below by $u$, $u_{k}$ 
converges uniformly to a solution $v$ of 
\eqref{eq-cases-main-cauchy-problem} on every compact subset of 
$\R^N\times(0,T_0)$ as $k\to\infty$. By an argument similar to the proof 
of Theorem 2.4 in \cite{Hu2}, $v$ has initial value $u_0$. 
Letting $k\to\infty$ in \eqref{eq-comparison-maximal-functions-u-u-k-B-k-0},
\begin{equation*}
v(x,t)\ge u(x,t)\quad \mbox{in $\R^N\times(0,T_0)$}.
\end{equation*}  
On the other hand since $u$ is the maximal solution of with initial value 
$u_{0}$,
\begin{equation*}
u(x,t) \ge v(x,t) \qquad \mbox{in $\R^N\times(0,T_0)$}.
\end{equation*}
Hence $u=v$ on $\R^N\times(0,T_0)$. Since both $B_{k_0}\ge B_k$ and 
$u_k\ge B_k$ for any $k\geq k_0$, by Lemma~\ref{eq-L-1-contraction-principle},
\begin{equation}
\label{eq-L-1-contraction-maximal-function-of-u-k-in-Section-improvement}
\int_{\R^N}|B_{k_0}-u_k|(x,t)\,dx \leq \int_{\R^N}|B_{k_0}-u_k|(x,0)\,dx
\leq \|f\|_{L^1(\R^N)}, \quad \forall k\geq k_0,\,\,
\forall 0<t<\min\left(T,T_0\right).
\end{equation}
Letting $k\to\infty$ in 
\eqref{eq-L-1-contraction-maximal-function-of-u-k-in-Section-improvement}, 
we get \eqref{eq-L-1-contraction-of-improvement-123} and the lemma follows.
\end{proof}

Note that if $0\le u_0\in L^{\infty}(\R^N)$ satisfies 
\eqref{eq-condition-of-improvement-1} for some function $0\le f\in L^1(\R^N)$, 
then the maximal solution $u$ of \eqref{eq-cases-main-cauchy-problem} and $B_{k_0}$ 
have the same vanishing time. The reason is as follows. 
Let $T_0>0$ be the maximal time of existence of the solution 
$u$ of \eqref{eq-cases-main-cauchy-problem}. We first suppose that $T_0<T$, 
then by \eqref{eq-L-1-contraction-of-improvement-123}
\begin{equation*}
\int_{\R^N}|B_{k_0}(x,T_0)|\,dx \leq \|f\|_{L^1(\R^N)}.
\end{equation*}
On the other hand, since the dimension $N\geq 3$, $B_{k_0}(x,T_0)
\notin L^1(\R^N)$. Contradiction arises. Hence $T_0\geq T$. We now 
assume that $T_0>T$. Letting $t\nearrow T$ in 
\eqref{eq-L-1-contraction-of-improvement-123},
\begin{equation*}
\int_{\R^N}|u(x,T)|\,dx \leq \|f\|_{L^1(\R^N)}.
\end{equation*}
This contradicts the result of Vazquez \cite{V1} which said that 
\eqref{eq-cases-main-cauchy-problem} has no solution that is in $L^1(\R^N)$. 
Hence $T=T_0$ and the maximal solution $u$ vanishes at the same time as 
$B_{k_0}(x,t)$.

We next prove a lemma on the existence of maximal solutions of 
\eqref{eq-cases-main-cauchy-problem}.

\begin{lem}[cf. Corollary 2.8 in \cite{Hu2}]
\label{lem-cf-Corollary-2-8-in-hui-1-in-improvement}
Let $N\ge 3$ and let $g(x)=B_{k_0}(x,0)-h(x)$ for some radially symmetric
function $0\le h\in L^{\infty}(\R^N)\cap L^1(\R^N)$ be such that $g(x)\ge 0$ 
on $\R^N$. Then there exists a unique maximal solution $u$ of 
\eqref{eq-cases-main-cauchy-problem} in $\R^N\times(0,T)$ with initial value 
$g$.
\end{lem}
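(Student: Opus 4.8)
The plan is to follow the construction of the maximal solution carried out in the proof of Corollary~2.8 of \cite{Hu2}, specialised to the initial datum $g=B_{k_0}(\cdot,0)-h$. First one checks that $g$ is admissible for that machinery: since $0\le g\le B_{k_0}(\cdot,0)$ and $B_{k_0}(x,0)=2(N-2)T^{N/(N-2)}/(k_0+T^{2/(N-2)}|x|^2)$ satisfies $B_{k_0}(\cdot,0)\le 2(N-2)T^{N/(N-2)}k_0^{-1}$ and $B_{k_0}(x,0)\le 2(N-2)T|x|^{-2}$ for $x\ne 0$, the function $g$ is bounded, nonnegative, and satisfies the decay bound $g(x)\le C|x|^{-2}$ for $|x|$ large. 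Concretely, for $k\ge k_0$ let $u_k$ be the maximal solution of \eqref{eq-cases-main-cauchy-problem} with initial value $g_k:=\max(g,B_k(\cdot,0))$; this solution exists by \cite{Hu2} because $g_k$ is trapped between the two Barenblatt profiles $B_k(\cdot,0)$ and $B_{k_0}(\cdot,0)$, and by comparison $B_k(\cdot,t)\le u_k\le B_{k_0}(\cdot,t)$ on $\R^N\times(0,T)$, so the positive barrier $B_k(\cdot,t)$ renders the equation uniformly parabolic on compact subsets of $\R^N\times(0,T)$ and the interior Schauder estimates \cite{LSU} apply to each $u_k$.

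Since $g_k$ decreases in $k$, the comparison result of \cite{Hu2} forces $u_{k+1}\le u_k$, so $u:=\lim_{k\to\infty}u_k$ exists with $0\le u\le B_{k_0}(\cdot,t)$; the Schauder bounds together with an Arzela--Ascoli and diagonalisation argument identify $u$ as a limit of the $u_k$ solving $u_t=\Delta\log u$ classically wherever it is positive. That $u$ is in fact positive throughout $\R^N\times(0,T)$ and attains the initial value $g$ in $L^1_{loc}(\R^N)$ as $t\to 0$ is obtained as in the proof of Theorem~2.4 of \cite{Hu2}: one exploits the exact Barenblatt decay of $g$ at infinity, together with $|g-B_{k_0}(\cdot,0)|\le h\in L^1(\R^N)$, to build barriers forcing $u(\cdot,t)\to g$, after which the instantaneous positivity of fast--diffusion flows gives $u>0$ for $t>0$, so $u$ is a genuine solution. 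Maximality is inherited from the construction — each approximant is taken, as in \cite{Hu2}, so as to dominate on the relevant ball every solution with initial value $g$, whence $v\le u$ for every such $v$ — and uniqueness of the maximal solution follows since two such solutions dominate one another. Finally the maximal existence interval is exactly $(0,T)$: writing it as $(0,T_0)$, Lemma~\ref{lem-L-1-contraction-in-improvement-6-2-0008} applied with $f=h$ gives $\|u(\cdot,t)-B_{k_0}(\cdot,t)\|_{L^1(\R^N)}\le\|h\|_{L^1(\R^N)}$ for $0<t<\min(T,T_0)$; if $T_0<T$, letting $t\nearrow T_0$ forces $B_{k_0}(\cdot,T_0)\in L^1(\R^N)$, impossible since $B_{k_0}(x,T_0)\approx c|x|^{-2}$ and $N\ge 3$, while if $T_0>T$, letting $t\nearrow T$ forces $u(\cdot,T)\in L^1(\R^N)$, contradicting the non-existence of $L^1(\R^N)$ solutions of \eqref{eq-cases-main-cauchy-problem} due to Vazquez \cite{V1}.

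I expect the genuine difficulty to lie entirely in the middle step — showing that the decreasing limit $u$ does not degenerate and does assume the initial value $g$. Because $g$ may vanish on a large set while the lower barriers $B_k(\cdot,t)$ collapse as $k\to\infty$, uniform parabolicity in the limit cannot be kept by a soft argument; instead one needs a uniform-in-$k$ local lower bound for $u_k$ on compact subsets of $\R^N\times(0,T)$, of Aronson--Caffarelli/Harnack type, using that $\int_{B_R(0)}g\,dx$ grows like $R^{N-2}$ because $g$ carries the full Barenblatt tail of $B_{k_0}(\cdot,0)$ at infinity. This is precisely the point treated in \cite{Hu2}, and adapting that estimate together with the barrier construction for the initial trace is the only nontrivial part; everything else — monotonicity of the approximants, the Schauder estimates, the comparison arguments, and the extinction--time dichotomy above — is routine.
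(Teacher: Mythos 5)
Your overall architecture is sound, and your closing step (the extinction-time dichotomy via Lemma \ref{lem-L-1-contraction-in-improvement-6-2-0008} with $f=h$, the non-integrability of $B_{k_0}(\cdot,T_0)$, and Vazquez's non-existence of $L^1$ solutions) is exactly the paper's. But your existence step takes a genuinely different route, and it is precisely there that you leave the one nontrivial point unproved. The paper does not build $u$ as a decreasing limit of maximal solutions $u_k$ with data $\max(g,B_k(\cdot,0))$; it directly verifies the sufficient condition of Corollary 2.8 of \cite{Hu2}, namely a logarithmic lower bound for the Green potential of the initial datum. Using the radial symmetry of $g$ to write
\begin{equation*}
\4{G}_R(g)(x)=\int_0^{|x|}\frac{1}{\omega_N r^{N-1}}\Big(\int_{|y|\le r}g(y)\,dy\Big)\,dr,
\end{equation*}
together with $B_{k_0}(y,0)\ge C_1|y|^{-2}$ for $|y|\ge 1$ and $\|h\|_{L^1(\R^N)}<\infty$, it obtains $\4{G}_R(g)(x)\ge \frac{C_1}{N-2}\log|x|-C\left(1-|x|^{2-N}\right)\ge C_2\log|x|$ for $R_1\le |x|\le R$, i.e.\ \eqref{eq-calculus-of-tilde-green-in-improve-545}, and then existence, uniqueness and positivity of the maximal solution on some $(0,T_1)$ come in one stroke from the results of \cite{Hu2}. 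That computation is the entire content of the lemma beyond the dichotomy, and it is exactly where the radial symmetry of $h$ is used.

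In your scheme the corresponding point --- a uniform-in-$k$ positive lower bound for $u_k$ on compact subsets of $\R^N\times(0,T)$, plus recovery of the initial trace $g$ for the decreasing limit --- is flagged but not supplied, and you cannot borrow it from the proof of Lemma \ref{lem-L-1-contraction-in-improvement-6-2-0008}: there the limit is squeezed from below by an already-existing maximal solution with initial value $u_0$, which is the very object you are trying to construct here. Your observation that $\int_{B_R(0)}g\,dx\gtrsim R^{N-2}$ is the right underlying fact (after one radial integration it is equivalent to the $\log|x|$ growth of $\4{G}_R(g)$), so the cleanest repair is to convert it into the explicit bound \eqref{eq-calculus-of-tilde-green-in-improve-545} and invoke Hui's existence criterion directly, as the paper does, rather than rerun the approximation argument and then have to re-prove the non-degeneracy that this criterion is designed to deliver.
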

\begin{proof}
Since the proof is similar to that of Corollary 2.8 of \cite{Hu2}, we will 
only give a sketch of the proof here. For any $R>0$ and any function 
$\psi\in L^1(B_R(0))$, let
\begin{equation*}
\4{G}_R(\psi)(x)=\int_{B_R(0)}\left(G_R(0,y)-G_R(x,y)\right)\,\psi(y)\,dy 
\qquad \forall |x|\leq R
\end{equation*}
where $G_R$ is the Green function for the Laplacian on $B_R(0)$. Since $u_0$ 
is radially symmetric and 
\begin{equation*}
B_{k_0}(x,0)\geq \frac{C_1}{|x|^2} \qquad \forall |x|\geq 1
\end{equation*}
for some constant $C_1>0$, for any $R>1$, we have (cf.\cite{Hu2})
\begin{equation*}
\4{G}_R(g)(x)=\int_{0}^{|x|}\frac{1}{\omega_{N}r^{N-1}}
\left(\int_{|y|\le r}g(y)\,dy\right)\,dr\geq 0, \qquad \forall |x|\leq R
\end{equation*}
and
\begin{equation*}
\begin{aligned}
\4{G}_R(g)(x)&\geq\int_{1}^{|x|}\frac{1}{\omega_{N}r^{N-1}}
\left(\int_{1\leq |y|\leq r}g(y)\,dy\right)\,dr\\
&\geq \frac{C_1}{N-2}\log |x|-\left(\frac{C_1}{(N-2)^2}
+\frac{\|h\|_{L^1(\R^N)}}{(N-2)\omega_N}\right)\left(1-|x|^{2-N}\right) 
\qquad \forall 1\leq |x|\leq R.
\end{aligned}
\end{equation*}
Hence there exist constants $R_1>1$ and $C_2>0$ such that 
\begin{equation}\label{eq-calculus-of-tilde-green-in-improve-545}
\4{G}_R(g)(x)\ge C_2\log |x|\qquad \forall R_1
\le |x|\leq R.
\end{equation}
Then by \eqref{eq-calculus-of-tilde-green-in-improve-545} and the result of 
\cite{Hu2}, \eqref{eq-cases-main-cauchy-problem} has a unique maximal 
solution $u$ with initial value $g$ in $\R^N\times(0,T_1)$ for some 
constant $T_1>0$. Since the solution $u$ is unique and $g$ is radially 
symmetric, $u(\cdot,t)$ is radially symmetric in $\R^N\times(0,T_1)$. 
Let $T_2>0$ be the maximal time of existence of the solution $u$. By the 
discussion just before the lemma we have $T_2=T$ and the lemma follows.
\end{proof}

By Lemma 1.8 of \cite{Hu2}, Lemma~\ref{lem-cf-Corollary-2-8-in-hui-1-in-improvement}, 
and an argument similar to the proof of Corollary 2.8 of \cite{Hu2} 
we have the following corollary. 

\begin{cor}
\label{lem-cf-Corollary-2-8-in-hui-1-in-improvement}
Let $N\ge 3$ and let $B_{k_0}(x,0)-h(x)\le u_0(x)\le B_{k_0}(x,0)$ for 
some radially symmetric function $h\in L^{\infty}(\R^N)\cap L^1(\R^N)$ 
satisfying $0\le h(x)\le B_{k_0}(x,0)$ on $\R^N$. Then there exists 
a unique maximal solution $u$ of \eqref{eq-cases-main-cauchy-problem} in 
$\R^N\times(0,T)$ satisfying $0\le u(x,t)\le B_{k_0}(x,t)$ in $\R^N\times 
(0,T)$ with initial value $u_0$.
\end{cor}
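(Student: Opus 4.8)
The plan is to combine the two previously established existence results---Lemma~\ref{lem-cf-Corollary-2-8-in-hui-1-in-improvement} on the existence of a maximal solution with initial value $B_{k_0}(x,0)-h$, and ``Lemma 1.8 of \cite{Hu2}''---with a monotone approximation argument, exactly along the lines of the proof of Corollary 2.8 of \cite{Hu2}. First I would set $h(x)=B_{k_0}(x,0)-u_0(x)$; by the hypothesis $B_{k_0}(x,0)-h(x)\le u_0(x)\le B_{k_0}(x,0)$ together with the assumed bound on $h$, this $h$ is radially symmetric, bounded, in $L^1(\R^N)$, and satisfies $0\le h\le B_{k_0}(x,0)$. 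The lower barrier $g(x):=B_{k_0}(x,0)-h(x)$ then falls under the scope of Lemma~\ref{lem-cf-Corollary-2-8-in-hui-1-in-improvement}, which furnishes a unique radially symmetric maximal solution, call it $\underline{u}$, of \eqref{eq-cases-main-cauchy-problem} in $\R^N\times(0,T)$ with initial value $g$; in particular $\underline{u}\le B_{k_0}$ by comparison with the Barenblatt solution.

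Next I would produce the candidate solution with initial data $u_0$ itself. One natural route is to invoke Lemma 1.8 of \cite{Hu2} (the general existence-of-maximal-solution statement for initial data trapped below a suitable profile), using $\underline{u}$ and $B_{k_0}$ as lower and upper barriers respectively; since $g\le u_0\le B_{k_0}(x,0)$ and both $g$ and $B_{k_0}(x,0)$ generate solutions that persist up to time $T$, the comparison principle of \cite{Hu2} gives a maximal solution $u$ with $\underline{u}(x,t)\le u(x,t)\le B_{k_0}(x,t)$ in $\R^N\times(0,T)$, which in particular yields $0\le u\le B_{k_0}$. Alternatively, one approximates $u_0$ from above by the radially symmetric data $\min(u_0(x),B_{k_0}(x,0)-h(x))$-type truncations, or from below, exploiting monotonicity of maximal solutions in the initial data exactly as in the proof of Lemma~\ref{lem-L-1-contraction-in-improvement-6-2-0008}, and passes to the limit using the Schauder estimates \cite{LSU} to control the sequence on compact subsets of $\R^N\times(0,T)$. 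Uniqueness of $u$ as a maximal solution is immediate from the defining property once existence is known.

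I expect the genuine obstacle to be the identification of the extinction time, i.e.\ showing $u$ vanishes at exactly $T$ rather than earlier or later. But this is precisely the content of the remark following Lemma~\ref{lem-L-1-contraction-in-improvement-6-2-0008}: since $u_0$ satisfies \eqref{eq-condition-of-improvement-1}, Lemma~\ref{lem-L-1-contraction-in-improvement-6-2-0008} gives $\|u(\cdot,t)-B_{k_0}(\cdot,t)\|_{L^1(\R^N)}\le\|f\|_{L^1(\R^N)}$ for all $t<\min(T,T_0)$, and the two-sided argument there---$B_{k_0}(\cdot,T_0)\notin L^1$ rules out $T_0<T$, while $u(\cdot,T)\in L^1$ would contradict Vazquez's nonexistence result \cite{V1} if $T_0>T$---forces $T_0=T$. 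So the extinction-time claim is already essentially on record.

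Finally, with $u$ trapped as $0\le u\le B_{k_0}$ and $|u_0-B_{k_0}(\cdot,0)|\le f\in L^1(\R^N)$, I would reduce the convergence assertion to Theorem~\ref{lem-main-theorem-MS} by the same monotone approximation. Writing $u_k$ for the maximal solution with initial data $\max(B_k(x,0),u_0(x))$ (for $k\ge k_0$) as in Lemma~\ref{lem-L-1-contraction-in-improvement-6-2-0008}, each $u_k$ is trapped between two Barenblatt solutions, $B_k\le u_k\le B_{k_0}$, and has $u_{0,k}-B_{k_0}\in L^1(\R^N)$, so Theorem~\ref{lem-main-theorem-MS} applies to each rescaled $\4{u}_k$, giving uniform and $L^1(\4{B}^{(N-4)/2},\R^N)$-convergence to $\4{B}_{k_0}$ (note the limiting Barenblatt constant must be $k_0$, since $\int(u_{0,k}-B_{k_0})\,dx\to 0$ as $k\to\infty$ by monotone convergence and \eqref{eq-condition-of-improvement-1}). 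Since $u_k\downarrow u$ as $k\to\infty$, the rescaled functions $\4{u}_k\downarrow\4{u}$, and one passes to the limit in $k$ first and then in $s$; the weighted $L^1$-bound of Lemma~\ref{lem-initial-L-tilde-B-to-L-tilde-B} and Lemma~\ref{lem-L-1-contraction-in-improvement-6-2-0008} provide the uniform control needed to interchange the limits, yielding the stated convergence of $\4{u}(\cdot,s)$ to $\4{B}_{k_0}$ as $s\to\infty$. The main technical care is in justifying this double limit, but the monotonicity makes it routine.
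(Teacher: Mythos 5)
Your overall strategy---use the preceding lemma to produce a radial maximal solution $\underline{u}$ with initial value $g=B_{k_0}(\cdot,0)-h$, then sandwich the maximal solution with data $u_0$ between $\underline{u}$ and $B_{k_0}$ via the comparison and monotone-approximation machinery of \cite{Hu2}---is exactly what the paper intends (the paper offers no proof beyond citing Lemma 1.8 and Corollary 2.8 of \cite{Hu2} together with the preceding lemma). But your opening move is wrong: you redefine $h(x)=B_{k_0}(x,0)-u_0(x)$ and assert that this $h$ is radially symmetric. Nothing in the hypothesis forces $u_0$ to be radial; only the dominating function $h$ in the statement is radial, and $B_{k_0}(\cdot,0)-u_0$ is merely squeezed between $0$ and that radial $h$. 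Feeding your redefined $h$ into the preceding lemma amounts to applying that lemma to $g=u_0$ itself, which is illegitimate precisely in the case the corollary is designed to cover (non-radial $u_0$); and if $u_0$ really were radial, the corollary would add nothing to the lemma. The repair is immediate: keep the hypothesized radial $h$, take $g=B_{k_0}(\cdot,0)-h\le u_0$ as the radial lower barrier, obtain $\underline{u}$ from the preceding lemma, and then run your second paragraph as written ($\underline{u}\le u\le B_{k_0}$ on $\R^N\times(0,T)$ by monotonicity of maximal solutions in the initial data, so $u$ stays positive up to time $T$ and satisfies $0\le u\le B_{k_0}$; uniqueness is built into maximality).

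Two smaller points. The extinction-time discussion via Lemma 5.2 and Vazquez's nonexistence result is a legitimate way to see that the existence time is not less than $T$, but it presupposes that a maximal solution already exists on some $(0,T_0)$, so it cannot substitute for the barrier construction; the lower barrier $\underline{u}>0$ on $(0,T)$ gives persistence up to $T$ more directly. And your final paragraph, on the convergence of $\4{u}$ to $\4{B}_{k_0}$, proves the conclusion of Theorem 5.1 rather than of this corollary, whose assertion is only existence, uniqueness and the bound $0\le u\le B_{k_0}$; it is harmless but out of place here.
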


\begin{lem}\label{improve-tidle-u-bounded-above-below}
Let $N\ge 3$. Suppose $u_0$ satisfies \eqref{eq-condition-of-improvement-0},
\eqref{eq-condition-of-improvement-1}, and $u$ is the maximal 
solution of \eqref{eq-cases-main-cauchy-problem}. Then there exist positive 
constants $C_1$, $C_2$, $C_3$, $r_0$, $s_0$ such that the rescaled 
function $\4{u}$ given by 
\eqref{eq-rescaled-function} satisfies
\begin{equation}\label{eq-upper-lower-bound-of-tilde-u-under-new-condition}
C_1\frac{e^{-C_3e^s\|f\|_{L^1}}}{1+r^2}\leq \4{u}(r,s) 
\leq C_2\frac{e^{C_3e^s\|f\|_{L^1}}}{1+r^2}  \qquad \forall r
\geq r_0,\,\,s\geq s_0.
\end{equation}
\end{lem}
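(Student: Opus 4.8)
The plan is to prove the two inequalities in \eqref{eq-upper-lower-bound-of-tilde-u-under-new-condition} separately. The upper bound is immediate; the real content is the lower bound, which I would obtain by comparison with an explicit time--dependent sub-barrier adapted to the rescaling \eqref{eq-rescaled-function}, the factor $e^{-C_3e^s\|f\|_{L^1}}=\exp\!\big(-C_3\|f\|_{L^1}/(T-t)\big)$ emerging from the barrier as $t\uparrow T$.

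\textbf{Upper bound.} Since $0\le u_0\le B_{k_0}(\cdot,0)$, the comparison principle for maximal solutions of \eqref{eq-cases-main-cauchy-problem} proved in \cite{Hu2} (already used in the proof of Lemma \ref{lem-L-1-contraction-in-improvement-6-2-0008}) gives $0\le u(x,t)\le B_{k_0}(x,t)$ in $\R^N\times(0,T)$, hence after rescaling $0\le\4{u}(x,s)\le\4{B}_{k_0}(x)=\frac{2(N-2)}{k_0+|x|^2}\le\frac{4(N-2)}{1+|x|^2}$ for $|x|\ge 1$. With $r_0\ge 1$, $C_2=4(N-2)$ and any $C_3>0$ this is even stronger than the claimed upper estimate, so only the lower bound remains.

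\textbf{Lower bound.} The inputs are: (i) the $L^1$--contraction of Lemma \ref{lem-L-1-contraction-in-improvement-6-2-0008}, which by the scaling exploited in Corollary \ref{cor-tilde-u-L-1-contraction-987} reads $\int_{\R^N}|\4{u}(\cdot,s)-\4{B}_{k_0}|\,dx\le\|f\|_{L^1}$ for all $s>-\log T$, and so bounds the total amount by which $\4u$ can lie below $\4B_{k_0}$; (ii) the interior positivity and Schauder regularity of the maximal solution on $\R^N\times(0,T)$; and (iii) the logarithmic lower bound $\4{G}_R(u_0)\gtrsim\log|x|$ for the Newtonian potential of the data (as in \eqref{eq-calculus-of-tilde-green-in-improve-545}), which through the construction of maximal solutions in \cite{Hu2} yields a quantitative positivity estimate for $u$. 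From these I would first produce a pointwise lower bound of the right spatial order at one fixed time, $\4{u}(x,s_0)\ge c_0(1+|x|^2)^{-1}$ for $|x|\ge r_0$, and then propagate it in $s$. The propagation is carried out in the original variables on the region $\{|y|>r_0(T-t)^{-1/(N-2)}\}\times(t_0,T)$ --- whose lateral boundary is exactly $\{|x|=r_0\}$ in the rescaled picture, where the fixed--time estimate controls $u$ from below --- by comparing $u$ with the sub-barrier $\underline u(y,t)=a(t)|y|^{-2}$ (or, in a variant, with the Barenblatt sub-barriers $B_{k(t)}$ with $k(t)$ nondecreasing). Because $\La\log\big(a(t)|y|^{-2}\big)=-2(N-2)|y|^{-2}$, the sub-barrier requirement is $a'(t)\le-2(N-2)$, which forces $a$ below the critical Barenblatt tail rate, so the comparison can be run only on a bounded $t$--interval before $a$ has to be reset at a size governed by the contraction bound for $B_{k_0}-u$; iterating the resets up to $T$ and converting the accumulated multiplicative loss to the rescaled time yields the factor $\exp(-C_3e^s\|f\|_{L^1})$.

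\textbf{Main obstacle.} The delicate step is extracting the fixed--time pointwise lower bound: the contraction estimate controls $\|\4u(\cdot,s)-\4B_{k_0}\|_{L^1}$ only by the constant $\|f\|_{L^1}$, with no gain as $s\to\infty$, whereas a pointwise bound with the sharp decay $(1+r^2)^{-1}$ at all scales is needed. Since $u_t=\La\log u$ is singular at $u=0$, no Harnack inequality is available to pass from this integral control to pointwise control, so the argument must be routed through the maximal--solution machinery of \cite{Hu2} and the moving--domain comparison; together with the vanishing of the remaining time $T-t$, this detour is exactly what degrades the constant to the $e^s\|f\|_{L^1}$ appearing in the exponent.
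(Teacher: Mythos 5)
Your upper bound is fine: $0\le u_0\le B_{k_0}(\cdot,0)$ plus the comparison of maximal solutions from \cite{Hu2} gives $\4{u}\le\4{B}_{k_0}$, which for $r\ge r_0\ge 1$ is stronger than the stated upper estimate. The problem is the lower bound, where your proposal has a genuine gap exactly at the point you yourself flag as ``the main obstacle'': you never actually convert the $L^1$ contraction bound into a pointwise lower bound, and the barrier scheme you sketch cannot do it. Each ``reset'' of the sub-barrier $a(t)|y|^{-2}$ requires a fresh pointwise lower bound of size comparable to the Barenblatt tail at the reset time, and the lateral boundary $\{|x|=r_0\}$ of your moving domain requires a lower bound on $\4{u}$ there for \emph{all} later $s$ --- both of which are precisely the statement being proven. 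The $L^1$ bound $\|\4{u}(\cdot,s)-\4{B}_{k_0}\|_{L^1}\le\|f\|_{L^1}$ by itself cannot supply these, so the argument is circular, and the claim that the iteration produces the factor $e^{-C_3e^s\|f\|_{L^1}}$ is not substantiated.

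The paper closes this gap with a two-step mechanism absent from your proposal. First, an \emph{elliptic potential estimate}: for the approximating maximal solutions $u_k$, the function $w_k(x)=\int_l^t|\log u_k-\log B_{k_0}|(x,\tau)\,d\tau$ satisfies $\La w_k\ge -|u_k-B_{k_0}|(\cdot,l)$, hence $w_k-Z_k$ is subharmonic where $Z_k$ is the Newtonian potential of $|u_k-B_{k_0}|(\cdot,l)$; combining the mean value property with the $L^1$ contraction of Lemma \ref{lem-L-1-contraction-in-improvement-6-2-0008} yields the \emph{pointwise in $x$} bound $w_k(x)\le C_3\|f\|_{L^1}/r^{N-2}$. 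This is how integral control becomes pointwise control: it is pointwise in space but still averaged in time. Second, the \emph{Aronson--B\'enilan inequality} $u_t\le u/t$ converts the resulting two-sided bound on $\frac{1}{t-l}\int_l^t\log u_k\,d\tau$ into pointwise bounds on $u_k(x,t)$ and $u_k(x,l)$; choosing $t-l=T-t$ and dividing by $t-l$ is what produces the $\|f\|_{L^1}/(T-t)=e^s\|f\|_{L^1}$ in the exponent. (The paper first treats radial $u_0$ so that $Z_k$ has the explicit radial form, then handles general data by comparison.) Without the potential estimate and the Aronson--B\'enilan step, or some substitute for them, your proof does not go through.
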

\begin{proof}
We will use a modification of the proof of Proposition 6.2 of \cite{DS1}
to prove the lemma. We will first prove 
\eqref{eq-upper-lower-bound-of-tilde-u-under-new-condition} 
under the assumption that $u_0$ is radially symmetric. Let 
$\{u_{k}\}_{k\geq k_0}$ be the sequence constructed in the proof of 
Lemma \ref{lem-L-1-contraction-in-improvement-6-2-0008}. As observed 
in the proof of Lemma \ref{lem-before-L-1-contraction-46} the function
\begin{equation*}
w_k(x)=\int_l^t|\log u_k-\log B_{k_0}|(x,\tau)\,d\tau\qquad
\forall 0<l<t<T
\end{equation*}
satisfies
\begin{equation*}
\La w_k(x)\geq -|u_k-B_{k_0}|(x,l) \qquad \mbox{in $\R^N$}\quad\forall
0<l<t<T
\end{equation*}
and
\begin{equation*}
\La(w_k-Z_k(\cdot,l))\geq 0 \qquad \mbox{in $\R^N$}\qquad\qquad\qquad
\forall 0<l<t<T
\end{equation*}
where $Z_k(x,l)$ is given by
\begin{equation*}
Z_k(x,l)=\int_r^{\infty}\frac{1}{\omega_N\rho^{N-1}}
\int_{|y|\leq\rho}|u_k-B_{k_0}|(y,l)\,dyd\rho, \qquad r=|x|.
\end{equation*}
Note that $Z_k$ satisfies 
$\La Z_k(x,l)=-|u_k-B_{k_0}|(x,l)$ in $\R^N$. Then, as in the proof of 
Lemma \ref{lem-before-L-1-contraction-46},
\begin{equation*}
w_k(x)\leq Z_k(x,l) \qquad \mbox{in $\R^N$}.
\end{equation*}
Thus
\begin{equation}\label{eq-conclude-that-in-improvement-of-proposition-1110}
w_k(x)\leq C_3\frac{\|(u_k-B_{k_0})(l)\|_{L^1(\R^N)}}{r^{N-2}}, \qquad r=|x|\geq 1,
\end{equation}
for some constant $C_3>0$. By \eqref{eq-condition-of-improvement-1}, 
\eqref{eq-L-1-contraction-maximal-function-of-u-k-in-Section-improvement} 
and \eqref{eq-conclude-that-in-improvement-of-proposition-1110},
\begin{equation}\label{eq-upper-lower-bound-of-int-u-in-improvements-111}
\int_{l}^{t}\log B_{k_0}(x,\tau)\,d\tau-C_3\frac{\|f\|_{L^1(\R^N)}}{r^{N-2}}
\leq \int_{l}^{t}\log u_k(x,\tau)\,d\tau \leq \int_{l}^{t}
\log B_{k_0}(x,\tau)\,d\tau+C_3\frac{\|f\|_{L^1(\R^N)}}{r^{N-2}}
\end{equation}
holds for any $r=|x|\ge 1$ and $0<l<t<T$. We now let $t\in [3T/4,T)$ and choose 
$l\in [T/2,T)$ such that $T-t=t-l$. For any $l\leq \tau\leq t$,
\begin{equation*}
B_{k_0}(x,\tau)
\leq \frac{2(N-2)(T-l)_+^{\frac{N}{N-2}}}{k_0+(T-t)_+^{\frac{2}{N-2}}|x|^2}
=\frac{2(N-2)\left[2(T-t)_+\right]^{\frac{N}{N-2}}}{k_0+(T-t)_+^{\frac{2}{N-2}}|x|^2}
=2^{\frac{N}{N-2}}B_{k_0}(x,t)
\end{equation*}
and similarly 
\begin{equation*}
B_{k_0}(x,\tau)\geq 2^{-\frac{N}{N-2}}B_{k_0}(x,l)\quad\forall l<\tau<T.
\end{equation*}
Hence
\begin{equation}\label{eq-upper-lower-bound-of-int-u-in-improvements-222}
(t-l)\left\{\log B_{k_0}(x,l)-\log 2^{\frac{N}{N-2}}\right\}
\leq \int_{l}^{t}\log B_{k_0}(x,\tau)\,d\tau\leq (t-l)\left\{\log B_{k_0}(x,t)
+\log 2^{\frac{N}{N-2}}\right\}.
\end{equation}
By \eqref{eq-upper-lower-bound-of-int-u-in-improvements-111} and 
\eqref{eq-upper-lower-bound-of-int-u-in-improvements-222},
\begin{equation}\label{eq-last-two-inequalities-for-estimate-of-u-1}
\log \left(\frac{B_{k_0}(x,l)}{C_4}\right)
\leq \frac{1}{t-l}\int_{l}^{t}\log u_k(x,\tau)\,d\tau 
\leq \log \left(C_4B_{k_0}(x,t)\right) \qquad \forall |x|=r\ge 1
\end{equation}
where $C_4=e^{C_3\frac{\|f\|_{L^1}}{T-t}}2^{\frac{N}{N-2}}$. Since $u_k$ satisfies the 
Aronson-Benilan inequality (cf. \cite{Hu2}), 
$$
u_t\le\frac{u}{t}\qquad\qquad\mbox{ in }\R^N\times (0,T), 
$$
we have
\begin{align}\label{eq-last-two-inequalities-for-estimate-of-u-2}
&\frac{\tau}{t}u_k(x,t)\le u_k(x,\tau)\le\frac{\tau}{l}u_k(x,l)
\qquad\qquad\qquad\qquad\qquad\qquad\qquad\forall x\in\R^N,
l\le\tau\le t\nonumber\\
\Rightarrow\quad&\log\left(\frac{l}{t}\,\,u_k(x,t)\right)\leq \frac{1}{t-l}
\int_{l}^{t}\log u_k(x,\tau)\,d\tau \leq \log\left(\frac{t}{l}\,\,u_k(x,l)
\right)\quad\forall  x\in\R^N,l\le\tau\le t.
\end{align}
Now by our choice for $l$ we have $t/l\le 2$. Then  
by \eqref{eq-last-two-inequalities-for-estimate-of-u-1} and
\eqref{eq-last-two-inequalities-for-estimate-of-u-2},
\begin{equation}\label{eq-last-two-inequalities-for-estimate-of-u-11}
u_k(x,t)\leq C_5e^{C_3\frac{\|f\|_{L^1}}{T-t}}B_{k_0}(x,t) 
\qquad \forall\,\, |x|\ge 1,3T/4\le t<T
\end{equation}
and
\begin{equation}\label{eq-last-two-inequalities-for-estimate-of-u-22}
u_k(x,l)\geq C_6e^{-C_3\frac{\|f\|_{L^1}}{T-t}}B_{k_0}(x,l) 
\qquad \forall\,\, |x|\ge 1,3T/4\le l<T
\end{equation}
for some constants $C_5$, $C_6>0$. Letting $k\to\infty$ in 
\eqref{eq-last-two-inequalities-for-estimate-of-u-11} and 
\eqref{eq-last-two-inequalities-for-estimate-of-u-22},
\begin{equation}\label{eq-last-two-inequalities-for-estimate-of-u-33}
C_6e^{-C_3\frac{\|f\|_{L^1}}{T-t}}B_{k_0}(x,t)\leq u(x,t)
\leq C_5e^{C_3\frac{\|f\|_{L^1}}{T-t}}B_{k_0}(x,t)\qquad\forall\,\,|x|\ge 1,
3T/4\le t<T.
\end{equation}
By \eqref{eq-last-two-inequalities-for-estimate-of-u-33} we conclude 
after rescaling,
\begin{equation*}
C_1\frac{e^{-C_3e^s\|f\|_{L^1}}}{1+r^2}\leq \4{u}(r,s) 
\leq C_2\frac{e^{-C_3e^s\|f\|_{L^1}}}{1+r^2}, \qquad 
\forall r\ge T^{1/(N-2)},s\ge -\log (T/4)
\end{equation*}
for some constants $C_1>0$, $C_2>0$.

When $u_0(x)$ is nonradial and satisfies \eqref{eq-condition-of-improvement-1}, 
by the above result for the radially symmetric initial data case and an argument 
similar to the last step of the proof of Proposition 6.2 of \cite{DS1} on 
p.118 of \cite{DS1} the lemma follows.
\end{proof}

\begin{proof}[\textbf{Proof of Theorem \ref{thm-improvement-first-1}}]

By Lemma~\ref{improve-tidle-u-bounded-above-below}
there exist positive constants $C_1$, $C_2$, $C_3$, $s_0$ and $r_0$ such 
that \eqref{eq-upper-lower-bound-of-tilde-u-under-new-condition} holds. 
Let $s_1>s_0>-\log T$ and $Q_{r_0}^{s_1}=B_{r_0}(0)\times(s_0,s_1)$. Then there 
exist constants $C_4>0$, $C_5>0$ such that
\begin{equation}\label{new-bd}
\frac{C_4}{1+r_0^2}\leq\4{u}(x,s)\le\frac{C_5}{1+r_0^2}
\end{equation}
on the parabolic boundary $\partial_p Q_{r_2}^{s_1}=(\2{B_{r_0}(0)}
\times\{s_0\})\cup (\partial B_{r_0}(0)\times(s_0,s_1))$. 
By the maximum principle,
\begin{equation}\label{eq-lower-bound-by-maximum-principle}
\frac{C_4}{1+r_0^2}\le\4{u}(x,s)\le\frac{C_5}{1+r_0^2} \qquad 
\mbox{in }Q_{r_0}^{s_1}.
\end{equation}
By \eqref{new-bd} and \eqref{eq-lower-bound-by-maximum-principle}, 
\begin{equation*}
\frac{C_4'}{1+|x|^2}\le\4{u}(x,s)\le\frac{C_5'}{1+|x|^2}\qquad 
\mbox{on }\R^N\times[s_0,s_1)
\end{equation*}
for some constants $C_4'>0$, $C_5'>0$. Hence $\4{u}-\4{B}_{k_0}$ satisfies
\eqref{eq-difference-between-tilde-u-and-tilde-v}  in $\R^N\times 
[s_0,\infty)$ with
\begin{equation*}
\4{a}(x,s)=\int_0^1\frac{d\theta}{\theta\4{u}+(1-\theta)\4{B}_{k_0}}
\end{equation*} 
and $\4{a}$ satisfies \eqref{eq-condition-of-Lemma-4-first-437}
for some constants $C_1>0$ and $C_2>0$. By 
\eqref{eq-condition-of-improvement-0},
\begin{equation*}
\4{u}(x,s)\le\frac{2(N-2)}{k_0+|x|^2}\quad\mbox{ in }\R^N\times [s_0,\infty).
\end{equation*}
Hence 
\begin{equation*}
\4{a}(x,s)\ge\frac{k_0+|x|^2}{2(N-2)}\quad\mbox{ in }
\R^N\times[s_0,\infty).
\end{equation*}
Then by an argument similar to the the proof of 
Theorem~\ref{lem-main-theorem-MS} in 
section~\ref{section-The-Non-Integral-Case-I-N-geq-4} the theorem follows.
\end{proof}

{\bf Acknowledgement} Sunghoon Kim was supported by Basic Science Research Program through the National Research Foundation of Korea(NRF) funded by the Ministry of Education, Science and Technology(2011-0030749).

\end{document}